\newcommand{\tnorm}{\@ifstar\@tnorms\@tnorm}
\newcommand{\@tnorms}[1]{%
  \left|\mkern-1.5mu\left|\mkern-1.5mu\left|
   #1
  \right|\mkern-1.5mu\right|\mkern-1.5mu\right|
}
\newcommand{\@tnorm}[2][]{%
  \mathopen{#1|\mkern-1.5mu#1|\mkern-1.5mu#1|}
  #2
  \mathclose{#1|\mkern-1.5mu#1|\mkern-1.5mu#1|}
}
\newcommand{\jump}[1]{\llbracket #1 \rrbracket}
\newcommand{\av}[1]{\{\!\!\{#1\}\!\!\}}
\newtheorem{theorem}{Theorem}
\newtheorem{lemma}{Lemma}
\newtheorem{corollary}{Corollary}
\newtheorem{remark}{Remark}
\title{A strongly conservative hybridizable discontinuous Galerkin
  method for the coupled time-dependent Navier--Stokes and Darcy
  problem}
\author{A. Cesmelioglu\thanks{Department of Mathematics and
    Statistics, Oakland University, MI, USA
    (\url{cesmelio@oakland.edu}),
    \url{https://orcid.org/0000-0001-8057-6349}}
  \and
  Jeonghun J. Lee\thanks{Department of Mathematics, Baylor University,
    TX, USA (\url{jeonghun_lee@baylor.edu}),
    \url{https://orcid.org/0000-0001-5201-8526}}
  \and
  S. Rhebergen\thanks{Department of Applied Mathematics, University of
    Waterloo, ON, Canada (\url{srheberg@uwaterloo.ca}),
    \url{http://orcid.org/0000-0001-6036-0356}}}
\begin{document}
\maketitle
\begin{abstract}
  We present a strongly conservative and pressure-robust hybridizable
  discontinuous Galerkin method for the coupled time-dependent
  Navier--Stokes and Darcy problem. We show existence and uniqueness
  of a solution and present an optimal a priori error analysis for the
  fully discrete problem when using Backward Euler time stepping. The
  theoretical results are verified by numerical examples.
\end{abstract}
\section{Introduction}
\label{sec:introduction}

In this paper we present an analysis of a hybridizable discontinuous
Galerkin (HDG) method for the coupled Navier--Stokes and Darcy
equations that model surface/subsurface flow. While various conforming
and nonconforming finite element methods have been studied for the
stationary Navier--Stokes and Darcy problem, see for example
\cite{Badea:2010,Chidyagwai:2009,Chidyagwai:2010,Discacciati:2017,Discacciati:2009,Girault:2009,Girault:2013},
the literature on numerical methods for the time-dependent problem is
limited. The first numerical methods for the time-dependent problem
were studied in \cite{Cesmelioglu:2008,Cesmelioglu:2009}. To simplify
the analysis, however, these papers included inertia effects in the
balance of forces at the interface. Existence and uniqueness of a weak
solution to the physically more relevant model, without inertia
effects on the interface, was proven in \cite{Cesmelioglu:2013b},
while convergence of a discontinuous Galerkin method for this model
was proven in \cite{Chaabane:2017}. Conforming methods for the
transient problem have been studied in \cite{Jia:2019,Xue:2020}.

The aforementioned papers for the time-dependent Navier--Stokes and
Darcy problem have in common that they consider the primal form of the
Darcy problem. In contrast, we consider the mixed form of the Darcy
problem as this facilitates the formulation of a \emph{strongly
  conservative} discretization, i.e., a discretization that is mass
conserving in the sense of $H(\text{div};\Omega)$ where the
velocity is globally $H(\text{div};\Omega)$-conforming and, in the
absence of sources and sinks, pointwise divergence-free on the
elements \cite{Kanschat:2010}. In particular, we consider an HDG
method \cite{Cockburn:2009a} that is based on the HDG method for the
Navier--Stokes equations \cite{Rhebergen:2018a} and a hybridized
formulation of the mixed form of the Darcy problem \cite{Arnold:1985},
although nonconforming formulations based on other strongly
conservative discretizations, for example,
\cite{Cockburn:2004b,Fu:2018,Lehrenfeld:2016,Wang:2007}, are possible.

Previously, we proved pressure-robustness of strongly conservative HDG
methods for the Stokes/Darcy \cite{Cesmelioglu:2020} and stationary
Navier--Stokes/Darcy \cite{Cesmelioglu:2023} problems, leading to a
priori error estimates for the velocity that do not depend on the best
approximation of the pressure scaled by the inverse of the viscosity
(see \cite{John:2017,Linke:2014} for a review of other pressure-robust
discretizations). Using Backward Euler time stepping we now show
existence and uniqueness of a solution and derive an a priori error
estimate to the fully-discrete time-dependent problem. Compared to
previous work on the time-dependent Navier--Stokes/Darcy problem
\cite{Cesmelioglu:2008,Cesmelioglu:2009,Cesmelioglu:2013b,Chaabane:2017,Jia:2019,Xue:2020},
the novel contributions of this work is therefore the introduction and
analysis of a strongly conservative HDG discretization and an a priori
error estimate for the velocity that is independent of pressure.

The remainder of this paper is organized as follows. We present the
time-dependent Navier--Stokes/Darcy problem in \cref{sec:nsd} and its
HDG discretization in \cref{sec:hdg}. Consistency and well-posedness
of the discrete problem are shown in \cref{sec:consiswellpose} while a
priori error estimates are proven in \cref{sec:apriori}. We end this
paper with numerical examples in \cref{s:numexamples} and conclusions
in \cref{s:conclusions}.

\section{The Navier--Stokes and Darcy problem}
\label{sec:nsd}
We consider the time-dependent incompressible Navier--Stokes equations
coupled to the Darcy equations on a polyhedral domain $\Omega$ in
$\mathbb{R}^{\dim}$, $\dim=2,3$, and on the time interval
$J=(0,T)$. The domain is partitioned into two non-overlapping
subdomains $\Omega^s$ and $\Omega^d$ such that
$\Omega = \Omega^s \cup \Omega^d$,
$\Omega^s \cap \Omega^d = \emptyset$, and
$\Gamma^I := \partial\Omega^s \cap \partial\Omega^d$. The boundary of
the domain $\partial \Omega$ and the interface $\Gamma^I$ are assumed
to be Lipschitz polyhedral. We define $\Gamma^s$ and $\Gamma^d$ to be
the exterior boundaries of $\Omega^s$ and $\Omega^d$, respectively. We
partition $\Gamma^d := \Gamma_N^d \cup \Gamma_D^d$, with
$\Gamma_N^d \cap \Gamma_D^d = \emptyset$ and $|\Gamma_N^d| > 0$ and
$|\Gamma_D^d| > 0$, and denote the outward unit normal on $\Gamma^j$
to $\Omega^j$ ($j=s,d$) by $n$.

The Navier--Stokes equations are given by
\begin{subequations}
  \label{eq:ns}
  \begin{align}
    \label{eq:ns-a}
    \partial_t u^s + \nabla \cdot (u^s \otimes u^s) + \nabla p^s - \nabla \cdot (2\mu \varepsilon(u^s))
    &= f^s && \text{ in } \Omega^s \times J,
    \\
    \label{eq:ns-b}
    \nabla \cdot u^s
    &= 0 && \text{ in } \Omega^s \times J,
  \end{align}
\end{subequations}
where $u^s : \Omega^s \times J \to \mathbb{R}^{\dim}$ is the velocity
in $\Omega^s$, $p^s : \Omega^s \times J \to \mathbb{R}^{\dim}$ is the
pressure in $\Omega^s$,
$\varepsilon(w) = \tfrac{1}{2}(\nabla w + (\nabla w)^T)$, $\mu >0$ is
the constant fluid viscosity, and
$f^s : \Omega^s \times J \to \mathbb{R}^{\dim}$ is a body force. In
$\Omega^d$ the Darcy equations are given by:
\begin{subequations}
  \label{eq:darcy}
  \begin{align}
    \label{eq:darcy-a}
    \mu \kappa^{-1}u^d + \nabla p^d
    &= 0 && \text{ in } \Omega^d \times J,
    \\
    \label{eq:darcy-b}
    -\nabla \cdot u^d
    &= f^d && \text{ in } \Omega^d \times J,
  \end{align}
\end{subequations}
where $u^d : \Omega^d \times J \to \mathbb{R}^{\dim}$ is the fluid
velocity in $\Omega^d$, $p^d : \Omega^d \times J \to \mathbb{R}$ is
the piezometric head in $\Omega^d$, and $\kappa > 0$ is the permeability
constant. The Navier--Stokes equations are coupled to the Darcy
equations by the following interface conditions
\begin{subequations}
  \label{eq:interface}
  \begin{align}
    \label{eq:interface-a}
    u^s \cdot n
    &= u^d \cdot n
    && \text{ on } \Gamma^I \times J,
    \\
    \label{eq:interface-b}
    -2\mu(\varepsilon(u^s)n)^t
    &= \alpha\mu\kappa^{-1/2} (u^s)^t
    && \text{ on } \Gamma^I \times J,
    \\
    \label{eq:interface-c}
    (p^sn - 2\mu\varepsilon(u^s)n)\cdot n
    &= p^d
    && \text{ on } \Gamma^I \times J,   
  \end{align}
\end{subequations}
where $n$ is the unit normal vector on $\Gamma^I$ pointing from
$\Omega^s$ to $\Omega^d$, $(v)^t := v - (v\cdot n)n$ is the tangential
component of a vector $v$, and $\alpha > 0$ is an experimentally
determined dimensionless constant. Note that \cref{eq:interface-a}
ensures continuity of the normal component of the velocity across the
interface, \cref{eq:interface-b} is the Beavers--Joseph--Saffman law
\cite{Beavers:1967,Saffman:1971}, and \cref{eq:interface-c} is a
balance of forces. We assume the following initial and boundary
conditions:
\begin{subequations}
  \label{eq:ic-bc}
  \begin{align}
    \label{eq:ic-bc-a}
    u^s(x,0)
    &= u_0(x) && \text{ in } \Omega^s,
    \\
    \label{eq:ic-bc-b}
    u^s
    &=0 && \text{ on } \Gamma^s \times J,
    \\
    \label{eq:ic-bc-c}
    u^d \cdot n
    &= 0 && \text{ on } \Gamma^d_N \times J,
    \\
    \label{eq:ic-bc-d}
    p^d
    &= 0 && \text{ on } \Gamma^d_D \times J,
  \end{align}
\end{subequations}
where $u_0 : \Omega^s \to \mathbb{R}^{\dim}$ is a solenoidal initial
velocity field. We close this section by introducing
$u :\Omega \times J \to \mathbb{R}^{\dim}$ and
$p : \Omega \times J \to \mathbb{R}$ to be the functions such that
$u|_{\Omega^j}=u^j$ and $p|_{\Omega^j}=p^j$ for $j=s,d$.

\section{The HDG method}
\label{sec:hdg}

\subsection{Notation}
\label{ss:dnotation}

Let $j=s,d$. We denote by $\mathcal{T}_h^j = \cbr[0]{K}$ a conforming
triangulation of $\Omega^j$ of shape-regular simplices $K$. We assume
that $\mathcal{T}_h = \mathcal{T}_h^s \cup \mathcal{T}_h^d$ is a
matching simplicial mesh, i.e., $\mathcal{T}_h^s$ and
$\mathcal{T}_h^d$ match at the interface. We denote by $h_K$ the
diameter of $K \in \mathcal{T}_h$ and define the meshsize as
$h := \max_{K \in \mathcal{T}_h} h_K$. A face $F$ is an interior face
if for two elements $K^+$ and $K^-$ in $\mathcal{T}_h$,
$F = \partial K^+ \cap \partial K^-$, and a boundary face if
$F \in \partial K$ lies on the boundary $\partial \Omega$. The set of
all facets in $\overline{\Omega}$ and $\overline{\Omega}^j$ are
denoted by, respectively, $\mathcal{F}_h$ and $\mathcal{F}_h^j$, while
the set of all facets on the interface $\Gamma^I$ is denoted by
$\mathcal{F}_h^I$. The set of all facets on $\Gamma^j$ are denoted by
$\mathcal{F}_h^{B,j}$ while the set of all facets interior to
$\Omega^j$ are denoted by $\mathcal{F}_h^{int,j}$. The sets of facets
on $\Gamma_N^d$ and $\Gamma_D^d$ are denoted by, respectively,
$\mathcal{F}_h^{N,d}$ and $\mathcal{F}_h^{D,d}$. By $\Gamma_0$ and
$\Gamma_0^j$ we denote the union of facets in $\overline{\Omega}$ and
$\overline{\Omega}^j$. The outward unit normal vector on $\partial K$
for any element $K \in \mathcal{T}_h^j$ is denoted by $n^j$. On the
interface $\Gamma^I$, $n=n^s=-n^d$. We will drop the superscript $j$
if the definition of the outward unit normal vector is clear.

We partition the time interval $J$ into $N$ equal intervals of length
$\Delta t = T/N$. We define $t^n := n\Delta t$ for $n=0,\hdots, N$ and
note that $t^0=0$ and $t^N = T$. A function $f$ evaluated at $t=t^n$
will be denoted by $f^n:=f(t^n)$. Furthermore, we define
$\delta f^{n+1} = f^{n+1} - f^n$ and
$d_tf^{n+1} = \delta f^{n+1}/\Delta t = (f^{n+1} - f^n)/\Delta t$.

Denoting by $P_m(D)$ the space of polynomials of total degree $m$ on a
domain $D$, we define the following finite element spaces for the
velocity approximation:
\begin{equation*}
  \begin{split}
    X_h &:= \cbr[1]{v_h \in \sbr[0]{L^2(\Omega)}^{\dim} : \ v_h \in
      \sbr[0]{P_k(K)}^{\dim}, \ \forall\ K \in \mathcal{T}},
    \\
    X_h^j &:= \cbr[1]{v_h \in \sbr[0]{L^2(\Omega^j)}^{\dim} : \ v_h \in
      \sbr[0]{P_k(K)}^{\dim}, \ \forall\ K \in \mathcal{T}^j}, \quad j=s,d,
    \\
    \bar{X}_h &:= \cbr[1]{\bar{v}_h \in \sbr[0]{L^2(\Gamma_0^s)}^{\dim}:\
      \bar{v}_h \in \sbr[0]{P_{k}(F)}^{\dim}\ \forall\ F \in \mathcal{F}^s,\
      \bar{v}_h = 0 \text{ on } \Gamma^s}.
  \end{split}
\end{equation*}
For notational purposes, we write
$\boldsymbol{v}_h = (v_h, \bar{v}_h) \in \boldsymbol{X}_h := X_h
\times \bar{X}_h$ and
$\boldsymbol{v}_h^s = (v_h^s, \bar{v}_h) \in \boldsymbol{X}_h^s :=
X_h^s \times \bar{X}_h$. Furthermore, for the pressure approximation
we define the finite element spaces
\begin{equation*}
  \begin{split}
    Q_h &:= \cbr[1]{q_h \in L^2(\Omega) : \ q_h \in P_{k-1}(K) ,\
      \forall \ K \in \mathcal{T}},
    \\
    Q_h^j &:= \cbr[1]{q_h \in L^2(\Omega^j) : \ q_h \in P_{k-1}(K) ,\
      \forall \ K \in \mathcal{T}^j}, \quad j=s,d,
    \\
    \bar{Q}_h^s &:= \cbr[1]{\bar{q}_h^s \in L^2(\Gamma_0^s) : \ \bar{q}_h^s
      \in P_{k}(F) \ \forall\ F \in \mathcal{F}^s},
    \\
    \bar{Q}_h^d &:= \cbr[1]{\bar{q}_h^d \in L^2(\Gamma_0^d) : \ \bar{q}_h^d
      \in P_{k}(F) \ \forall\ F \in \mathcal{F}^d,\ \bar{q}_h^d = 0 \text{ on } \Gamma_D^d}.
  \end{split}
\end{equation*}
We write
$\boldsymbol{q}_h = (q_h, \bar{q}_h^s, \bar{q}_h^d) \in
\boldsymbol{Q}_h := Q_h \times \bar{Q}_h^s \times \bar{Q}_h^d$ and
$\boldsymbol{q}_h^j = (q_h, \bar{q}_h^j) \in \boldsymbol{Q}_h^j :=
Q_h^j \times \bar{Q}_h^j$.

For scalar functions $p$ and $q$, we define
\begin{align*}
  (p, q)_K &:= \int_K p q \dif x, \quad \forall K \in \mathcal{T}_h,
  &
  \langle p, q \rangle_{\partial K} &:= \int_{\partial K} p q \dif s, \quad \forall K \in \mathcal{T}_h,
  \\
  \langle p, q \rangle_F &:= \int_F p q \dif s, \quad F \subset \partial K,\ \forall K \in \mathcal{T}_h,
  &
  (p, q)_{\Omega^j} &:= \sum_{K \in \mathcal{T}_h^j} (p, q)_K, \quad j=s,d,
  \\
  \langle p, q \rangle_{\partial\mathcal{T}_h^j} &:= \sum_{K \in \mathcal{T}_h^j} \langle p, q \rangle_{\partial K}, \quad j=s,d,
  &  
  (p, q)_{\Omega} &:= \sum_{K \in \mathcal{T}_h} \int_K p q \dif x, 
  \\  
  \langle p, q \rangle_{\partial\mathcal{T}_h} &:= \sum_{K \in \mathcal{T}_h} \langle p, q \rangle_{\partial K}, 
  &
  \langle p, q \rangle_{\Gamma^I} &:= \sum_{F \in \mathcal{F}_h^I} \langle p, q \rangle_F.
\end{align*}
Similar notation is used for vector- and matrix-valued functions.

\subsection{The semi-discrete problem}
\label{ss:semidiscrete}
An HDG method for the stationary Navier--Stokes and Darcy problem was
proposed in \cite{Cesmelioglu:2023}. Its extension to the
time-dependent problem is given by: Let
$u_h^{s,0} \in X_h^s \cap H({\rm div};\Omega^s)$ be the initial
condition for the velocity in $\Omega^s$ such that
$\nabla \cdot u_h^{s,0} = 0$ pointwise on each
$K \in \mathcal{T}_h^s$. For $t \in J$, find
$(\boldsymbol{u}_h(t), \boldsymbol{p}_h(t)) \in \boldsymbol{X}_h
\times \boldsymbol{Q}_h$ such that for all
$(\boldsymbol{v}_h, \boldsymbol{q}_h) \in \boldsymbol{X}_h \times
\boldsymbol{Q}_h$
\begin{equation}
  \label{eq:hdg-semi}
  (\partial_tu_h, v_h)_{\Omega^s}
  + a_h(u_h; \boldsymbol{u}_h, \boldsymbol{v}_h)
  + b_h(\boldsymbol{v}_h, \boldsymbol{p}_h)
  + b_h(\boldsymbol{u}_h, \boldsymbol{q}_h)
  = (f^s, v_h)_{\Omega^s} + (f^d, q_h)_{\Omega^d}.
\end{equation}
The different forms are defined as:
\begin{subequations}
  \begin{align}
    \label{eq:ah_s}
    a_h^s(\boldsymbol{u}, \boldsymbol{v})
    :=&\,
        (2\mu \varepsilon(u), \varepsilon(v))_{\Omega^s}
        + \langle 2\beta\mu h_K^{-1}(u-\bar{u}), v-\bar{v} \rangle_{\partial\mathcal{T}_h^s}
    \\
    \nonumber
      & - \langle 2\mu\varepsilon(u)n, v-\bar{v} \rangle_{\partial\mathcal{T}_h^s}
        - \langle 2\mu\varepsilon(v)n, u-\bar{u} \rangle_{\partial\mathcal{T}_h^s},
    \\
    \label{eq:ad}
    a^d(u,v)
    :=&\, (\mu\kappa^{-1}u, v)_{\Omega^d}
    \\
    \label{eq:aI}
    a^I(\bar{u}, \bar{v})
    := &\, \langle \alpha \mu \kappa^{-1/2} \bar{u}^t, \bar{v}^t \rangle_{\Gamma^I},
    \\
    \label{eq:def_ah_L}
    a_h^L(\boldsymbol{u}, \boldsymbol{v})
    :=& \,a_h^s(\boldsymbol{u}, \boldsymbol{v})
        + a^d(u, v) + a^I(\bar{u}, \bar{v}),
    \\
    \label{eq:oh}
    t_h(w; \boldsymbol{u}, \boldsymbol{v})
    :=&
        - (u \otimes w, \nabla v)_{\Omega^s}
        + \langle \tfrac{1}{2}w \cdot n \, (u+\bar{u}), v-\bar{v} \rangle_{\partial\mathcal{T}_h^s}
    \\ \nonumber
      & + \langle \tfrac{1}{2}\envert{w\cdot n}(u-\bar{u}), v-\bar{v} \rangle_{\partial\mathcal{T}_h^s}
        + \langle (w\cdot n)\bar{u}, \bar{v} \rangle_{\Gamma^I},        
    \\
    \label{eq:def_ah}
    a_h(w; \boldsymbol{u}, \boldsymbol{v})
    :=& \,t_h(w; \boldsymbol{u}, \boldsymbol{v})
        + a_h^L(\boldsymbol{u}, \boldsymbol{v})
  \end{align}
\end{subequations}
where $\beta > 0$ is a penalty parameter and where $a_h^L$ is the
linear part of $a_h$. For the velocity-pressure coupling we have, for $j=s,d$, the
forms:
\begin{align*}
  b_h^{j}(v, \boldsymbol{q}^{j})
  :=& -(q, \nabla \cdot v)_{\Omega^j} + \langle \bar{q}^j, v\cdot n^j \rangle_{\partial\mathcal{T}_h^j},
  \\
  b_h^{I,j}(\bar{v}, \bar{q}^j)
  :=& -\langle \bar{q}^j, \bar{v} \cdot n^j \rangle_{\Gamma^I},
  \\
  b_h(\boldsymbol{v}, \boldsymbol{q})
  :=& \,b_h^s(v, \boldsymbol{q}^s) + b_h^{I,s}(\bar{v}, \bar{q}^s) + b_h^d(v, \boldsymbol{q}^d) + b_h^{I,d}(\bar{v}, \bar{q}^d).  
\end{align*}

\subsection{The fully-discrete problem}
\label{ss:fullydiscrete}
Using backward Euler time-stepping, and lagging the convective
velocity in the nonlinear term, we obtain the following linear
implicit discretization: Let
$u_h^{s,0} \in X_h^s \cap H({\rm div};\Omega^s)$ be the initial
condition for the velocity in $\Omega^s$ such that
$\nabla \cdot u_h^{s,0} = 0$ pointwise on each
$K \in \mathcal{T}_h^s$. Find
$(\boldsymbol{u}_h^{n+1}, \boldsymbol{p}_h^{n+1}) \in \boldsymbol{X}_h
\times \boldsymbol{Q}_h$ with $n \ge 0$ such that for all
$(\boldsymbol{v}_h, \boldsymbol{q}_h) \in \boldsymbol{X}_h \times
\boldsymbol{Q}_h$:
\begin{equation}
  \label{eq:hdg-fully}
  \del[0]{ d_t u_h^{n+1}, v_h}_{\Omega^s}
  + a_h(u_h^n; \boldsymbol{u}_h^{n+1}, \boldsymbol{v}_h)
  + b_h(\boldsymbol{v}_h, \boldsymbol{p}_h^{n+1})
  + b_h(\boldsymbol{u}_h^{n+1}, \boldsymbol{q}_h)
  = (f^{s,n+1}, v_h)_{\Omega^s} + (f^{d,n+1}, q_h)_{\Omega^d}.
\end{equation}

\begin{remark}
  \label{rem:properties-hdg-fully}
  As observed previously in \cite{Cesmelioglu:2023} for the stationary
  Navier--Stokes and Darcy problem, the velocity solution to
  \cref{eq:hdg-fully} satisfies the following properties: (i) it is
  exactly divergence-free on elements in $\Omega^s$, i.e.,
  $\nabla \cdot u_h^n = 0$ pointwise on each $K \in \mathcal{T}_h^s$;
  (ii) it satisfies $-\nabla \cdot u_h^n = \Pi_Q^d f^{d,n}$ pointwise
  on each $K \in \mathcal{T}_h^d$ (where $\Pi_Q^d$ is the
  $L^2$-projection operator into $Q_h^d$); (iii) the velocity solution
  is globally divergence-conforming, i.e.,
  $u_h^n \in H(\text{div};\Omega)$; and (iv)
  $u_h^n \cdot n = \bar{u}_h^n \cdot n$ pointwise on each
  $F \in \mathcal{F}^I$. Furthermore, $u_h^{d,n} \cdot n = 0$ on
  $\Gamma_N^d$ and $u_h^{s,n} \cdot n = 0$ on $\Gamma^s$.
\end{remark}

\section{Well-posedness}
\label{sec:consiswellpose}

\subsection{Preliminary results}
\label{ss:cnotation}

Let $D$ be a domain. Norms on $W_p^k(D)$, $L^p(D)=W_p^0(D)$,
$H^k(D)=W_2^k(D)$, and $L^2(D)$ are denoted by, respectively,
$\norm{\cdot}_{W_p^k(D)}$, $\norm{\cdot}_{L^p(D)}$,
$\norm{\cdot}_{k,D}$, and $\norm{\cdot}_D$. Furthermore, for two real
numbers $a$, $b$, and a Banach space $X$ with norm
$\norm[0]{\cdot}_X$, $L^2(a,b;X)$ is defined as the space of square
integrable functions from $[a,b]$ into $X$ with norm
$\norm[0]{f}_{L^2(a,b;X)}:=(\int_a^b\norm[0]{f(t)}_X^2 \dif t)^{1/2}$
and $L^{\infty}(a,b;X)$ is the space of essentially bounded functions
from $[a,b]$ to $X$ with norm
$\norm[0]{f}_{L^{\infty}(a,b;X)} := \text{ess}
\sup_{[a,b]}\norm[0]{f(t)}_X$.

On $\Omega^s$ and $\Omega^d$, we define the function spaces
\begin{align*}
  X^s
  &:= \cbr[0]{v \in H^2(\Omega^s)^{\dim}\, :\, v=0 \text{ on } \Gamma^s},
  &
    Q^s
  &:= H^1(\Omega^s),
  \\
  X^d
  &:= \cbr[0]{v \in H^1(\Omega^d)^{\dim}\, : \, v\cdot n = 0 \text{ on } \Gamma^d_N},
  &
    Q^d
  &:= \cbr[0]{q \in H^2(\Omega^d)\, :\, q = 0 \text{ on } \Gamma^d_D}.  
\end{align*}
On $\Omega$, we then define
$X := \cbr[0]{ v \in H(\text{div};\Omega)\, :\, u^s \in X^s,\ u^d \in
  X^d}$ and
$Q := \{ q \in L^2(\Omega)\, :\, q^s \in Q^s, \ q^d \in Q^d \}$. The
trace space of $X^s$ on facets in $\Gamma_0^s$ is denoted by
$\bar{X}$. If $u \in X^s$, we denote its trace by
$\bar{u} := \gamma_X(u)$ where $\gamma_X : X^s \to \bar{X}$ is the
trace operator restricting functions in $X^s$ to
$\Gamma_0^s$. Similarly, the trace space of $Q^j$ on facets
$\Gamma_0^j$ is denoted by $\bar{Q}^j$,
$\gamma_{Q^j} : Q^j \to \bar{Q}^j$ is the trace operator, and if
$q \in Q^j$, then $\bar{q} := \gamma_{Q^j}(q) \in \bar{Q}^j$.

Using the notation $\boldsymbol{X} := X \times \bar{X}$ and
$\boldsymbol{Q} := Q \times \bar{Q}^s \times \bar{Q}^d$, we define
\begin{equation*}
  X(h) := X_h + X, \quad X^s(h) := X_h^s + X^s, \quad
  \boldsymbol{X}(h) := \boldsymbol{X}_h + \boldsymbol{X}, \quad
  \boldsymbol{Q}(h) := \boldsymbol{Q}_h + \boldsymbol{Q}.
\end{equation*}
As in \cite{Cesmelioglu:2023}, we define the following norms on
the extended function spaces:
\begin{align*}
  \tnorm{\boldsymbol{v}}_{v}^2
  :&=
     \tnorm{\boldsymbol{v}}_{v,s}^2
     + \tnorm{\boldsymbol{v}}_{v,d}^2
     + \norm[0]{ \bar{v}^t }^2_{\Gamma^I} && \boldsymbol{v} \in \boldsymbol{X}(h),
  \\
  \tnorm{\boldsymbol{v}}_{v'}^2
  :&= \tnorm{\boldsymbol{v}}_{v}^2
     + \sum_{K\in \mathcal{T}^s_h} h_K^2 \envert{v}_{2,K}^2
     =\tnorm{\boldsymbol{v}}_{v',s}^2  
     + \tnorm{\boldsymbol{v}}_{v,d}^2
     + \norm[0]{ \bar{v}^t }^2_{\Gamma^I} && \boldsymbol{v} \in \boldsymbol{X}(h),
  \\
  \tnorm{\boldsymbol{q}}_p^2 &:= \tnorm{\boldsymbol{q}^s}_{p,s}^2 + \tnorm{\boldsymbol{q}^d}_{p,d}^2
                                          && \boldsymbol{q} \in \boldsymbol{Q}(h),  
\end{align*}
where
\begin{equation*}
  \begin{split}
    \tnorm{\boldsymbol{v}}_{v,s}^2
    &:= \sum_{K\in \mathcal{T}^s_h}\del[1]{\norm[0]{\nabla v}_K^2
      + h_K^{-1}\norm{v-\bar{v}}^2_{\partial K}},
    \\
    \tnorm{\boldsymbol{v}}_{v',s}^2
    &:= \tnorm{\boldsymbol{v}}_{v,s}^2
    + \sum_{K\in \mathcal{T}^s_h} h_K^2 \envert{v}_{2,K}^2,
    \\
    \tnorm{\boldsymbol{v}}_{v,d}^2
    &:= \norm{v}_{\text{div};\Omega^d}^2 + \sum_{F \in \mathcal{F}^d_h\backslash(\mathcal{F}_h^I\cup\mathcal{F}_h^{D,d})}h_F^{-1}\norm[0]{\jump{v\cdot n}}_{F}^2
    + \sum_{K \in \mathcal{T}^d_h} h_K^{-1}\norm{(v - \bar{v})\cdot n}_{\partial K \cap \Gamma^I}^2,
    \\
    \tnorm{\boldsymbol{q}^j}_{p,j}^2
    &:= \norm{q}_{\Omega^j}^2 + \sum_{K
      \in \mathcal{T}^j_h} h_K \norm[0]{\bar{q}^j}_{\partial K}^2,\ j=s,d.    
  \end{split}  
\end{equation*}
Here $\jump{v \cdot n}$ is the usual jump operator and
$\norm{v}_{\text{div};\Omega^d}^2 := \norm{v}_{\Omega^d}^2 +
\norm{\nabla \cdot v}_{\Omega^d}^2$. Let us furthermore note that
$\norm{v_h}_{1,h,\Omega^s} := \tnorm{(v_h, \av{v_h})}_{v,s}$, where
$\norm{v_h}_{1,h,\Omega^s}$ is the standard discrete $H^1$-norm of
$v_h$ in $\Omega^s$ \cite{Cesmelioglu:2017}. Finally, we will also
require the following two norms on the pressure in $\Omega^d$:
\begin{align*}
  \norm[0]{q_h}_{1,h,\Omega^d}^2
  &:= \sum_{K \in \mathcal{T}_h^d} \norm[0]{\nabla q_h}_K^2
    + \sum_{F \in \mathcal{F}_h^{int,d} \cup \mathcal{F}_h^{D,d}} h_F^{-1} \norm[0]{\jump{q_h}}_F^2
  && \forall q_h \in Q_h^d,
  \\
  \tnorm{\boldsymbol{q}_h}_{1,h,d}^2
  &:= \sum_{K \in \mathcal{T}_h^d}
    \del[1]{ \norm[0]{\nabla q_h}_K^2 + h_K^{-1} \norm[0]{q_h - \bar{q}_h}_{\partial K}^2}
  && \forall \boldsymbol{q}_h \in \boldsymbol{Q}_h^d.  
\end{align*}
That $\norm[0]{q_h}_{1,h,\Omega^d}$ is a norm on $Q_h^d$ follows because
$|\Gamma_D^d| > 0$.

The following inequalities will be used in the remainder of this paper
(see \cite[eq.~(5.5)]{Wells:2011}, \cite[Theorem 4.4 and Proposition
4.5]{Girault:2009}, and \cite[Lemma~1.46]{Pietro:book}):
\begin{subequations}
  \begin{align}
    \label{eq:equivalencetnorm}
    \tnorm{\boldsymbol{v}_h}_v
    &\le \tnorm{\boldsymbol{v}_h}_{v'} \le c_e
      \tnorm{\boldsymbol{v}_h}_v
    && \forall \boldsymbol{v}_h \in \boldsymbol{X}_h,
    \\
    \label{eq:dpoincareineq}
    \norm{v_h}_{\Omega^s}
    &\le c_{p} \norm{v_h}_{1,h,\Omega^s}
      \le c_{p} \tnorm{\boldsymbol{v}_h}_{v,s}
    && \forall \boldsymbol{v}_h \in \boldsymbol{X}_h^s,
    \\
    \label{eq:dpoincareineq-qh}
    \norm{q_h}_{\Omega^d}
    &\le c_{pp} \norm{q_h}_{1,h,\Omega^d}
      \le c_{pp} \tnorm{\boldsymbol{q}_h}_{1,h,d}
    && \forall \boldsymbol{q}_h \in \boldsymbol{Q}_h^d,
    \\    
    \label{eq:dtrpoincareineq}
    \norm{v_h^s}_{L^r(\Gamma^I)}
    &\le c_{si,r} \norm{v_h}_{1,h,\Omega^s}
      \le c_{si,r} \tnorm{\boldsymbol{v}_h}_{v,s}
    && \forall \boldsymbol{v}_h \in \boldsymbol{X}_h^s,\ r \ge 2,
    \\
    \label{eq:trace-1}
    \norm{v}_{\partial K}
    &\le c_{tr} h_K^{-1/2} \norm[0]{v}_K
    && \forall v \in P_k(K), \ K \in \mathcal{T}_h,    
  \end{align}  
\end{subequations}  
where $c_e$, $c_p$, $c_{si,r}$, and $c_{tr}$ are positive constants
independent of $h$ and $\Delta t$.

For $b_h$ we have:
\begin{subequations}
  \label{eq:boundsbh}
  \begin{align}
    \label{eq:infsupbh}
    c_{bb} \tnorm{\boldsymbol{q}_h}_p
    &\le
      \sup_{0 \ne \boldsymbol{v}_h \in \boldsymbol{X}_h}
    \frac{b_h(\boldsymbol{v}_h, \boldsymbol{q}_h)}{\tnorm{\boldsymbol{v}_h}_{v}} && \forall \boldsymbol{q}_h \in \boldsymbol{Q}_h,
    \\
    \label{eq:bhbnd}
    |b_h(\boldsymbol{v}, \boldsymbol{q})|
    &\le c_{bc} \tnorm{\boldsymbol{v}}_v \tnorm{\boldsymbol{q}}_p
    && \forall (\boldsymbol{v}, \boldsymbol{q}) \in \boldsymbol{X}(h) \times \boldsymbol{Q}_h.
  \end{align}    
\end{subequations}
Due to the use of different function spaces, the inf-sup condition
\cref{eq:infsupbh} is different from that proven in
\cite{Cesmelioglu:2023}. We therefore prove \cref{eq:infsupbh} in
\cref{ap:infsup}. \Cref{eq:bhbnd} is proven in \cite[Lemma
3]{Cesmelioglu:2023}. For $a_h^s$, $a^d$, and $a^I$, we have that for
all $\boldsymbol{u}, \boldsymbol{v} \in \boldsymbol{X}(h)$,
\begin{equation}
  \label{eq:ahboundedness}
  |a_h^s(\boldsymbol{u}, \boldsymbol{v})|
  \le \mu c_{ac}^s \tnorm{\boldsymbol{u}}_{v',s}\tnorm{\boldsymbol{v}}_{v',s},
  \quad
  |a^d(u, v)|
  \le \mu\kappa^{-1} \norm{u}_{\Omega^d}\norm{v}_{\Omega^d},
  \quad
  |a^I(\bar{u}, \bar{v})|
  \le \alpha\mu\kappa^{-1/2}\norm[0]{\bar{u}^t}_{\Gamma^I}\norm[0]{\bar{v}^t}_{\Gamma^I},      
\end{equation}
where $c_{ac}^s>0$ is a constant independent of $h$ and $\Delta
t$. For $\boldsymbol{v}_h \in \boldsymbol{X}_h$ we have
\begin{equation}
  \label{eq:ahcoercivity}
  a_h^s(\boldsymbol{v}_h, \boldsymbol{v}_h)
  \ge \mu c_{ae}^s \tnorm{\boldsymbol{v}_h}_{v,s}^2,
  \quad
  a^d(v_h, v_h)
  \ge \mu \kappa^{-1} \norm{v_h}_{\Omega^d}^2,
  \quad
  a^I(\bar{v}_h, \bar{v}_h)
  \ge \alpha\mu\kappa^{-1/2}\norm[0]{\bar{v}_h^t}_{\Gamma^I}^2,      
\end{equation}
where the first inequality holds for $\beta$ large enough and where
$c_{ae}^s > 0$ is a constant independent of $h$ and $\Delta t$. A
direct consequence of \cref{eq:ahboundedness,eq:ahcoercivity} is that
\begin{subequations}
  \begin{align}
    \label{eq:ahLbddness}
    |a_h^L(\boldsymbol{u}, \boldsymbol{v})|
    &\leq \mu c_{ac}^{L}\tnorm{\boldsymbol{u}}_{v'} \tnorm{\boldsymbol{v}}_{v'}
    && \forall \boldsymbol{u}, \boldsymbol{v} \in \boldsymbol{X}(h),
    \\
    \label{eq:ahLcoercive}
    |a_h^L(\boldsymbol{v}_h, \boldsymbol{v}_h)|
    &\geq \mu c_{ae}^{L}\tnorm{\boldsymbol{v}_h}_{v}^2
    && \forall \boldsymbol{v}_h \in \boldsymbol{X}_h,
  \end{align}
\end{subequations}
where $c_{ac}^L:=\max( c_{ac}^s, \kappa^{-1}, \alpha\kappa^{-1/2})>0$
and $c_{ae}^L:=\min( c_{ae}^s, \kappa^{-1}, \alpha\kappa^{-1/2})>0$
are constants independent of $h$ and $\Delta t$, and where \cref{eq:ahLcoercive}
holds for $\beta$ large enough.

We also recall the following inequality from \cite[Lemma
4]{Cesmelioglu:2023}, \cite[Proposition 3.4]{Cesmelioglu:2017} related
to the form $t_h$. Assuming that
$w_1,w_2 \in X^s(h) \cap H(\text{div};\Omega^s)$ are such that
$\nabla \cdot w_1 = \nabla \cdot w_2 = 0$ on each
$K \in \mathcal{T}^s$ it holds for any
$\boldsymbol{u} \in \boldsymbol{X}^s(h)$,
$\boldsymbol{v} \in \boldsymbol{X}_h^s$ that
\begin{equation}
  \label{eq:boundedness_th}
  \envert[0]{t_h(w_1; \boldsymbol{u}, \boldsymbol{v}) - t_h(w_2; \boldsymbol{u}, \boldsymbol{v})}
  \le c_w \norm{w_1 - w_2}_{1,h,\Omega^s} \tnorm{\boldsymbol{u}}_{v,s} \tnorm{\boldsymbol{v}}_{v,s},
\end{equation}
where $c_w > 0$ is a constant independent of $h$ and $\Delta t$.

Assuming $w \in X^s(h) \cap H(\text{div};\Omega^s)$ is such that
$\nabla \cdot w = 0$ on each $K \in \mathcal{T}_h^s$, then \cite[Lemma
5]{Cesmelioglu:2023}
\begin{subequations}
  \begin{align}
    \label{eq:ahboundedX}
    \envert[0]{a_h(w;\boldsymbol{u},\boldsymbol{v})}
    &\le c_{ac} \mu \tnorm{\boldsymbol{u}}_{v'}\tnorm{\boldsymbol{v}}_{v'}
    && \forall \boldsymbol{u}, \boldsymbol{v} \in \boldsymbol{X}(h),
    \\
    \label{eq:ahboundedXh}
    \envert[0]{a_h(w;\boldsymbol{u}_h,\boldsymbol{v}_h)}
    &\le c_{ac} \mu \tnorm{\boldsymbol{u}_h}_{v}\tnorm{\boldsymbol{v}_h}_{v}
    && \forall \boldsymbol{u}_h, \boldsymbol{v}_h \in \boldsymbol{X}_h,
  \end{align}
\end{subequations}
where
$c_{ac} = 2c_e^2\max(c_w\mu^{-1}\norm{w}_{1,h,\Omega^s} + c_{ac}^s,
\kappa^{-1}, \alpha \kappa^{-1/2})$. Let
us now define
\begin{align*}
  \boldsymbol{Z}_h^s
  :&= \cbr[1]{ \boldsymbol{v}_h \in \boldsymbol{X}_h^s:\ b_h^s(v_h,\boldsymbol{q}^s_h) + b_h^{I,s}(\bar{v}_h,\bar{q}_h^s)
     = 0 \ \forall \boldsymbol{q}_h^s \in \boldsymbol{Q}_h^s},
  \\
  \boldsymbol{Z}_h
  :&= \cbr[1]{ \boldsymbol{v}_h \in \boldsymbol{X}_h:\
     \sum_{j=s,d}\del[0]{b_h^j(v_h,\boldsymbol{q}^j_h) + b_h^{I,j}(\bar{v}_h,\bar{q}_h^j)}
     = 0 \ \forall \boldsymbol{q}_h \in \boldsymbol{Q}_h}.
\end{align*}
If $w \in X^s(h) \cap H(\text{div};\Omega^s)$ such that
$\nabla \cdot w = 0$ on each $K \in \mathcal{T}_h^s$ and
$\norm{w \cdot n}_{\Gamma^I} \le \tfrac{1}{2}\mu c_{ae}^s /
(c_{pq}^2+c_{si,4}^2)$ on the interface, and if $\beta$ is large
enough that the first inequality in \cref{eq:ahcoercivity} holds, then
it was shown in \cite[Lemma 6]{Cesmelioglu:2023} that,
\begin{equation}
  \label{eq:coercivity_awhvhvh}
  a_h(w; \boldsymbol{v}_h, \boldsymbol{v}_h)
  \ge c_{ae} \mu
  \tnorm{\boldsymbol{v}_h}_{v}^2
  \quad \forall \boldsymbol{v}_h \in \boldsymbol{Z}_h,
\end{equation}
where
$c_{ae} = \min\del[1]{\tfrac{1}{2}c_{ae}^s, \kappa^{-1},
  \alpha\kappa^{-1/2}}$.

Using a proof similar to \cite[Lemma 1]{Cesmelioglu:2023}, it is
straightforward to obtain the following result.

\begin{lemma}[Consistency]
  \label{lem:consistency}
  Suppose that $(u, p)$ is the solution to
  \cref{eq:ns,eq:darcy,eq:interface,eq:ic-bc} that satisfies
  $u \in L^2(J;X)$, $p \in L^2(J;Q)$, and
  $\partial_tu \in L^2(J;L^2(\Omega^s))$. Let
  $\boldsymbol{u}=(u,\bar{u})$ and
  $\boldsymbol{p}=(p, \bar{p}^s, \bar{p}^d)$ and assume that
  $f^s\in C^0(J;L^2(\Omega^s)^{\dim})$ and
  $f^d\in C^0(J;L^2(\Omega^d))$. Then
  $(\boldsymbol{u}, \boldsymbol{p})$ satisfies \cref{eq:hdg-semi} for
  all $t > 0$.
\end{lemma}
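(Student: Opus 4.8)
The plan is to verify the consistency identity \cref{eq:hdg-semi} term by term, starting from the strong form \cref{eq:ns,eq:darcy,eq:interface,eq:ic-bc} and integrating by parts element-wise, exactly as one does for the stationary problem in \cite[Lemma 1]{Cesmelioglu:2023}; the only genuinely new ingredient is handling the time-derivative term $(\partial_t u^s, v_h)_{\Omega^s}$, which however is trivial since it is already in the required weak form. So first I would fix $t>0$, test the Navier--Stokes momentum equation \cref{eq:ns-a} with $v_h \in X_h$ on each $K \in \mathcal{T}_h^s$, test the Darcy momentum equation \cref{eq:darcy-a} with $v_h \in X_h^d$ on each $K \in \mathcal{T}_h^d$, test \cref{eq:ns-b} with $q_h$ on $\Omega^s$, test \cref{eq:darcy-b} with $q_h$ on $\Omega^d$, and sum. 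Integration by parts on the viscous, pressure, and convective terms produces volume integrals matching $(2\mu\varepsilon(u),\varepsilon(v))_{\Omega^s}$, $-(q,\nabla\cdot v)_{\Omega^j}$, $-(u\otimes u,\nabla v)_{\Omega^s}$, $(\mu\kappa^{-1}u,v)_{\Omega^d}$, plus a collection of facet integrals over $\partial\mathcal{T}_h^s$, $\partial\mathcal{T}_h^d$, and $\Gamma^I$.

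Next I would reorganize the facet terms. On interior facets of $\Omega^s$ and on $\Gamma^s$, single-valuedness of the exact traces $u, \bar u, p, \bar p^s$ together with $\bar v_h = 0$ on $\Gamma^s$ and $\bar u = u$, $\bar p^s = p^s$ on $\Gamma_0^s$ lets me insert $\bar u$, $\bar p^s$ freely and rewrite the $\partial\mathcal{T}_h^s$ jump terms in the symmetrized HDG form of $a_h^s$ (the penalty term $\langle 2\beta\mu h_K^{-1}(u-\bar u), v-\bar v\rangle$ vanishes because $u = \bar u$, the adjoint-consistency term $\langle 2\mu\varepsilon(v)n, u-\bar u\rangle$ vanishes for the same reason, and $\langle 2\mu\varepsilon(u)n, v-\bar v\rangle$ survives). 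The convective facet terms reassemble into $t_h(u;\boldsymbol u, \boldsymbol v)$ using $\nabla\cdot u^s = 0$ and again $u = \bar u$ on facets (so the upwind term $\langle\tfrac12|u\cdot n|(u-\bar u),\cdot\rangle$ drops). On $\Omega^d$ the Darcy momentum facet terms $\langle p^d, v_h\cdot n^d\rangle_{\partial\mathcal{T}_h^d}$ combine into $b_h^d(v_h,\boldsymbol p^d)$, using $\bar p^d = p^d$ on interior facets and $\bar p^d = 0$ on $\Gamma_D^d$, while on $\Gamma_N^d$ we use $u^d\cdot n = 0$.

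The interface facets over $\Gamma^I$ are where the three coupling conditions \cref{eq:interface-a,eq:interface-b,eq:interface-c} enter. Splitting the stress vector $2\mu\varepsilon(u^s)n$ into normal and tangential parts, \cref{eq:interface-c} turns the normal Navier--Stokes stress contribution on $\Gamma^I$ into a $p^d$ term which, paired against $v_h\cdot n$ on $\Gamma^I$ from both sides, produces the $b_h^{I,s}$ and $b_h^{I,d}$ interface forms (here one uses $n^s = -n^d$ and that the exact velocity satisfies $u^s\cdot n = u^d\cdot n$, matching the structure of $b_h$); the tangential part, via the Beavers--Joseph--Saffman law \cref{eq:interface-b}, becomes $\langle\alpha\mu\kappa^{-1/2}(u^s)^t,(v_h)^t\rangle_{\Gamma^I} = a^I(\bar u,\bar v_h)$ after replacing $u^s$ by $\bar u$ on the trace. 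The remaining $\langle(u\cdot n)\bar u,\bar v\rangle_{\Gamma^I}$ convective interface term is produced directly from the element-wise integration by parts of $\nabla\cdot(u^s\otimes u^s)$ restricted to $\Gamma^I$. Collecting everything and comparing with the definitions of $a_h^s$, $a^d$, $a^I$, $t_h$, $b_h^j$, $b_h^{I,j}$ gives precisely \cref{eq:hdg-semi}.

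The main obstacle — really the only nontrivial bookkeeping — is the interface term: correctly decomposing the Navier--Stokes Cauchy stress on $\Gamma^I$ into its normal and tangential components, matching the sign conventions $n = n^s = -n^d$, and checking that the normal part genuinely assembles into the $b_h^{I,s} + b_h^{I,d}$ structure rather than leaving a residual. Everything else (element-wise Green's formulas, inserting single-valued traces, vanishing of penalty and adjoint terms because the exact solution is conforming) is routine and parallels \cite[Lemma 1]{Cesmelioglu:2023}; the regularity hypotheses $u\in L^2(J;X)$, $p\in L^2(J;Q)$, $\partial_t u\in L^2(J;L^2(\Omega^s))$ and the continuity of $f^s, f^d$ in time are exactly what is needed to make all these integrals and the pointwise-in-$t$ evaluation meaningful.
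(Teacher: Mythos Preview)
Your proposal is correct and follows essentially the same approach as the paper, which simply states that the proof is similar to \cite[Lemma 1]{Cesmelioglu:2023} with the straightforward addition of the time-derivative term. Your identification of the only new ingredient (the already-weak $(\partial_t u^s, v_h)_{\Omega^s}$ term) and the interface bookkeeping as the main nontrivial step matches exactly what the paper's terse reference to the stationary case implies.
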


\subsection{Existence and uniqueness}
\label{ss:energystab}

We start this section with some auxiliary results.

\begin{lemma}
  \label{lem:tnormphduhd}
  For $\boldsymbol{p}_h^{d,n}$ and $u_h^{d,n}$ that satisfy
  \cref{eq:hdg-fully}, there exists a $c_{pd}>0$, independent of $h$
  and $\Delta t$, such that
  \begin{equation}
    \label{eq:boundphdf}
    \tnorm{\boldsymbol{p}_h^{d,n}}_{1,h,d}
    \le c_{pd} \mu\kappa^{-1} \norm[0]{u_h^{d,n}}_{\Omega^d}.
  \end{equation}
\end{lemma}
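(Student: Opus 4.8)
The plan is to extract a discrete problem for the Darcy pressure alone by making a judicious choice of test functions in \cref{eq:hdg-fully}, and then to use the inf-sup–type stability of the pair $(b_h^d, b_h^{I,d})$ with respect to $\tnorm{\cdot}_{1,h,d}$ together with the Darcy momentum equation \cref{eq:darcy-a}. First I would test \cref{eq:hdg-fully} with $\boldsymbol{v}_h = (v_h, \bar v_h)$ supported in $\Omega^d$ (i.e.\ $v_h \in X_h^d$, extended by zero, and $\bar v_h$ supported on facets in $\overline\Omega^d$), and with $\boldsymbol{q}_h = 0$. Because the chosen test function vanishes in $\Omega^s$, the time-derivative term, the convective term $t_h$, the Nitsche terms in $a_h^s$, and the interface form $a^I$ all drop out (by the structure of $a_h$ and the fact that $\bar v_h$ can be taken to vanish on $\Gamma^I$). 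What survives is
\begin{equation*}
  a^d(u_h^{d,n+1}, v_h) + b_h^d(v_h, \boldsymbol{p}_h^{d,n+1}) + b_h^{I,d}(\bar v_h, \bar p_h^{d,n+1}) = (f^{d,n+1}, 0)_{\Omega^d} = 0,
\end{equation*}
i.e.\ $b_h^d(v_h, \boldsymbol{p}_h^{d,n+1}) + b_h^{I,d}(\bar v_h, \bar p_h^{d,n+1}) = -\mu\kappa^{-1}(u_h^{d,n+1}, v_h)_{\Omega^d}$ for all admissible $v_h, \bar v_h$.

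Next I would invoke a discrete inf-sup condition of the form
\begin{equation*}
  c_{pd}^{-1} \tnorm{\boldsymbol{p}_h^{d,n+1}}_{1,h,d} \le \sup_{0 \neq (v_h, \bar v_h)} \frac{b_h^d(v_h, \boldsymbol{p}_h^{d,n+1}) + b_h^{I,d}(\bar v_h, \bar p_h^{d,n+1})}{\norm{v_h}_{\Omega^d} + (\text{facet terms})},
\end{equation*}
which follows by integrating by parts in $b_h^d$ (so that it reproduces $(\nabla q_h, v_h)_{\Omega^d}$ plus facet jump/boundary contributions controlled by $\tnorm{\cdot}_{1,h,d}$) and then choosing $v_h$ proportional to $\nabla p_h^{d,n+1}$ elementwise together with appropriate facet bubbles; this is the standard argument used to prove that $\norm{\cdot}_{1,h,\Omega^d}$ and $\tnorm{\cdot}_{1,h,d}$ are norms and is essentially the discrete counterpart of the $H^1$-ellipticity that holds because $|\Gamma_D^d| > 0$. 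Combining the displayed identity with this inf-sup bound and then estimating the right-hand side by Cauchy--Schwarz, $|{-\mu\kappa^{-1}(u_h^{d,n+1}, v_h)_{\Omega^d}}| \le \mu\kappa^{-1}\norm{u_h^{d,n+1}}_{\Omega^d}\norm{v_h}_{\Omega^d}$, yields \cref{eq:boundphdf}.

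The main obstacle I anticipate is establishing the inf-sup/norm-equivalence step cleanly with the hybridized (facet) pressure unknown $\bar p_h^d$ present: one must choose the velocity test function so that it simultaneously controls the volumetric gradient $\norm{\nabla p_h^d}_K$ and the facet differences $h_K^{-1/2}\norm{p_h^d - \bar p_h^d}_{\partial K}$ appearing in $\tnorm{\cdot}_{1,h,d}$, while staying in $X_h^d$ and respecting the boundary condition $\bar p_h^d = 0$ on $\Gamma_D^d$. This is where one would either cite the relevant lemma underlying \cref{eq:dpoincareineq-qh} or give a short self-contained argument using a Fortin-type operator / element-wise bubble functions; everything after that is routine Cauchy--Schwarz bookkeeping. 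Note the statement is naturally read at a fixed time level, so throughout one works with $n+1$ replaced by $n$ as in the lemma's notation.
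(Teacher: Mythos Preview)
Your proposal is correct and follows essentially the same route as the paper: test \cref{eq:hdg-fully} with $\boldsymbol{v}_h^s=0$, $\boldsymbol{q}_h=0$ and $v_h\in X_h^d$ to obtain the reduced identity $(\nabla p_h^d,v_h)_{\Omega^d}-\langle p_h^d-\bar p_h^d,v_h\cdot n^d\rangle_{\partial\mathcal{T}_h^d}=-\mu\kappa^{-1}(u_h^d,v_h)_{\Omega^d}$, then build the right test function to realize the inf-sup you describe. The paper simply makes your ``Fortin-type operator / element-wise bubble'' step concrete by prescribing $w_h$ through the BDM degrees of freedom, setting $w_h\cdot n|_{\partial K}=h_K^{-1}(p_h^d-\bar p_h^d)$ and the interior moments equal to $-\nabla p_h^d$, and then showing $\|w_h\|_{\Omega^d}\lesssim\tnorm{\boldsymbol{p}_h^d}_{1,h,d}$ by a scaling argument; note also that $\bar v_h\in\bar X_h$ lives only on $\Gamma_0^s$, so once you take $\bar v_h=0$ on $\Gamma^I$ it vanishes identically and the $b_h^{I,d}$ term in your displayed identity is automatically zero.
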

\begin{proof}
  We will prove \cref{eq:boundphdf} in three dimensions only noting
  that the proof in two dimensions is similar. To ease notation we
  will drop the ``time'' superscript $n$. The proof follows the proof
  of \cite[Lemma 2.1]{Lovadina:2006} with modifications made to take
  into account Brezzi--Douglas--Marini (BDM) elements and HDG facet
  functions.

  The local degrees of freedom for the BDM element
  are~\cite[Proposition 2.3.2]{Boffi:book}:
  \begin{equation}
    \label{eq:local-dofs-bdm}
    \langle v_h \cdot n, \bar{r}_h \rangle_{\partial K},
    \ \forall \bar{r}_h \in R_k(\partial K)
    \qquad \text{ and } \qquad
    (v_h, z_h)_K,
    \ \forall z_h \in \mathcal{N}_{k-2}(K),    
  \end{equation}
  where
  $R_k(\partial K) := \cbr[0]{ \bar{r} \in L^2(\partial K)\, :\,
    \bar{r}|_F \in P_k(F),\ \forall F \subset \partial K}$ and
  $\mathcal{N}_{k-2}(K)$ is the N\'ed\'elec space. Therefore, given
  $\boldsymbol{p}_h^d \in \boldsymbol{Q}_h^d$, we define
  $w_h \in V_h^d \cap H(\text{div};\Omega^d)$ such that
  \begin{subequations}
    \label{eq:whdofs}
    \begin{align}
      \label{eq:whdofs-a}
      \langle w_h \cdot n, \bar{r}_h \rangle_{\partial K}
      &= h_K^{-1}\langle p_h^d - \bar{p}_h^d, \bar{r}_h \rangle_{\partial K}
      && \forall \bar{r}_h \in R_k(\partial K), \ \forall K \in \mathcal{T}_h^d,
      \\
      \label{eq:whdofs-c}
      (w_h, z_h)_K
      &= -(\nabla p_h^d, z_h)_K
      && \forall z_h \in \mathcal{N}_{k-2}(K), \ \forall K \in \mathcal{T}_h^d.         
    \end{align}
  \end{subequations}
  Since
  $\nabla p_h^d \in \nabla P_{k-1}(K) \subset \sbr[0]{P_{k-2}}^3
  \subset \mathcal{N}_{k-2}(K)$ and since
  $p_h^d-\bar{p}_h^d \in R_k(\partial K)$, we obtain from
  \cref{eq:whdofs} that
  \begin{subequations}
    \label{eq:propertieswh}
    \begin{align}
      \langle w_h \cdot n, p_h^d-\bar{p}_h^d \rangle_{\partial K}
      &= h_K^{-1} \norm[0]{p_h^d - \bar{p}_h^d}_{\partial K}^2
      && \forall K \in \mathcal{T}_h^d,
      \\      
      (w_h, \nabla p_h^d)_K
      &= -\norm[0]{\nabla p_h^d}_K^2
      && \forall K \in \mathcal{T}_h^d.
    \end{align}
  \end{subequations}

  Setting now $\boldsymbol{v}_h^{s}=0$ and $\boldsymbol{q}_h=0$ in
  \cref{eq:hdg-fully}, and after integration by parts, we find for all
  $v_h \in V_h^d$ that:
  \begin{equation}
    \label{eq:whzzhnew}
    \begin{split}
      0
      &= 
      (\mu\kappa^{-1}u_h^d, v_h)_{\Omega^d} - (p_h^d, \nabla \cdot v_h)_{\Omega^d}
      + \langle \bar{p}_h^d, v_h \cdot n^d \rangle_{\partial\mathcal{T}_h^d}
      \\
      &
      =
      (\mu\kappa^{-1}u_h^d, v_h)_{\Omega^d}
      + (\nabla p_h^d, v_h)_{\Omega^d}
      - \langle p_h^d - \bar{p}_h^d, v_h \cdot n^d \rangle_{\partial \mathcal{T}_h^d}.
    \end{split}
  \end{equation}
  Choose $v_h = w_h$, with $w_h$ defined in \cref{eq:whdofs}. By
  \cref{eq:propertieswh}, \cref{eq:whzzhnew}, and the definition of
  $\tnorm{\boldsymbol{p}_h^d}_{1,h,d}^2$, we find
  \begin{equation}
    \label{eq:boundphd}
    \tnorm{\boldsymbol{p}_h^d}_{1,h,d}^2
    = (\mu\kappa^{-1}u_h^d, w_h)_{\Omega^d}
    \le \mu\kappa^{-1} \norm[0]{u_h^d}_{\Omega^d}\norm[0]{w_h}_{\Omega^d}.
  \end{equation}
  To find out more about $\norm[0]{w_h}_{\Omega^d}$, let us define
  the norm $\norm[0]{\cdot}_{0,h}$ for functions in
  $V_h^d \cap H(\text{div};\Omega^d)$:
  \begin{equation}
    \norm[0]{w_h}_{0,h}^2 := \norm[0]{w_h}_{\Omega^d}^2 + \sum_{F \in \mathcal{F}_h^d}h_F \norm[0]{w_h \cdot n}_F^2.
  \end{equation}
  Consider now a single element $K$ and denote by $\mathcal{F}_K$ the
  set of faces of $K$. In an approach similar to that used in the
  proof of \cite[Lemma 4.4]{Linke:2018}, we have:
  \begin{equation}
    \label{eq:bounding-norm-whdiv}
    \begin{split}
      \norm[0]{w_h}_{K}^2 + \sum_{F \in \mathcal{F}_K} h_F \norm[0]{w_h \cdot n}_{F}^2      
      &\lesssim \sup_{\substack{ z_h \in \mathcal{N}_{k-2}(K)^3 \\ \norm[0]{z_h}_{K}=1}} | (w_h, z_h)_K |^2 +
      \sup_{\substack{ \bar{r}_h \in R_k(\partial K) \\ \norm[0]{\bar{r}_h}_{\partial K}=1}} h_K |\langle w_h \cdot n, \bar{r}_h \rangle_F |^2
      \\
      &= \sup_{\substack{ z_h \in \mathcal{N}_{k-2}(K)^3 \\ \norm[0]{z_h}_{K}=1}} | (\nabla p_h^d, z_h)_K |^2 +
      \sup_{\substack{ \bar{r}_h \in R_k(\partial K) \\ \norm[0]{\bar{r}_h}_{\partial K}=1}} h_Kh_K^{-2} |\langle p_h^d - \bar{p}_h^d, \bar{r}_h \rangle_{\partial K} |^2
      \\
      &\le \norm[0]{\nabla p_h^d}_{K}^2 + h_K^{-1} \norm[0]{p_h^d - \bar{p}_h^d}_{\partial K}^2,
    \end{split}
  \end{equation}
  where the first line on the right hand side is by using the degrees
  of freedom \cref{eq:local-dofs-bdm}, the second by definition of
  $w_h$ given by \cref{eq:whdofs}, and the last is by the
  Cauchy--Schwarz inequality. Therefore, after summing
  \cref{eq:bounding-norm-whdiv} over all $K$ in $\mathcal{T}_h^d$:
  \begin{equation*}
    \norm[0]{w_h}_{\Omega^d}^2
    \le
    \norm{w_h}_{0,h}^2
    \lesssim \sum_{K \in \mathcal{T}_h^d} \del[1]{\norm[0]{\nabla p_h^d}_{K}^2 + h_K^{-1} \norm[0]{p_h^d - \bar{p}_h^d}_{\partial K}^2}
    = \tnorm{\boldsymbol{p}_h^d}_{1,h,d}^2.
  \end{equation*}
  The result follows after combining this with \cref{eq:boundphd}.
\end{proof}

An immediate consequence of \cref{eq:dpoincareineq-qh} and Lemma
\ref{lem:tnormphduhd} is that if $\boldsymbol{p}_h^{d,n}$ and
$u_h^{d,n}$ satisfy \cref{eq:hdg-fully}, then for $1 \le n \le N$:
\begin{equation}
  \label{eq:phdnvsuhdn}
  \norm[0]{p_h^{d,n}}_{\Omega^d} \le c_{pp} \norm[0]{p_h^{d,n}}_{1,h,\Omega^d} \le c_{pp}\tnorm{\boldsymbol{p}_h^{d,n}}_{1,h,d}
  \le c_{td} \mu \kappa^{-1} \norm[0]{u_h^{d,n}}_{\Omega^d},
\end{equation}
where $c_{td} = c_{pp}c_{pd}$.

The following result, which was shown in \cite[Theorem
5.2]{Chaabane:2017}, will be used to prove the next lemma: there
exists a constant $c > 0$, independent of $h$ and $\Delta t$, such
that
\begin{equation}
  \label{eq:chaabaneinterface}
  |\langle q_h, v_h \cdot n \rangle_{\Gamma^I}| \le c \norm[0]{q_h}_{1,h,\Omega^d} \norm[0]{v_h}_{\Omega^s}
  \quad \forall v_h \in \widetilde{V}_h^s,\ \forall q_h \in Q_h^d,
\end{equation}
where
$\widetilde{V}_h^s := \cbr[0]{v_h \in X_h^s\, :\, b_s(v_h, q_h) = 0 \
  \forall q_h \in Q_h^s}$ with
$b_s(v, q) := -(q, \nabla \cdot v)_{\Omega^s} + \sum_{F \in
  \mathcal{F}_h^{int,s} \cup \mathcal{F}_h^{B,s}} \langle \av{q},
\jump{v}\cdot n \rangle_F$.

\begin{lemma}
  \label{lem:phbduhsnGammaI}
  Let $u_h^{s,n}$, $u_h^{d,n}$ and $\bar{p}_h^{d,n}$ be (part of) the
  solution to \cref{eq:hdg-fully}. There exists a constant
  $c_{sdi}> 0$, independent of $h$ and $\Delta t$, such that for all
  $n \ge 1$
  \begin{equation}
    |\langle \bar{p}_h^{d,n}, u_h^{s,n} \cdot n \rangle_{\Gamma^I}|
    \le c_{sdi} \mu \kappa^{-1}\norm[0]{u_h^{d,n}}_{\Omega^d}\norm[0]{u_h^{s,n}}_{\Omega^s}.
  \end{equation}
\end{lemma}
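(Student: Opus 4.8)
The plan is to reduce the bound on $\langle \bar p_h^{d,n}, u_h^{s,n}\cdot n\rangle_{\Gamma^I}$ to the interface estimate \cref{eq:chaabaneinterface}, which controls $\langle q_h, v_h\cdot n\rangle_{\Gamma^I}$ for $v_h\in\widetilde V_h^s$ and $q_h\in Q_h^d$. Since that estimate only involves the \emph{element} pressure and not a facet pressure, the first move is to split $\bar p_h^{d,n} = p_h^{d,n} - (p_h^{d,n}-\bar p_h^{d,n})$ and estimate the two resulting contributions separately. As usual the time superscript can be dropped throughout.

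For the contribution $\langle p_h^d, u_h^s\cdot n\rangle_{\Gamma^I}$, I would first verify that $u_h^s\in\widetilde V_h^s$. This is exactly where the strong-conservation structure from \cref{rem:properties-hdg-fully} enters: $\nabla\cdot u_h^s=0$ on each $K\in\mathcal T_h^s$ annihilates the volume term in $b_s$; global $H(\text{div})$-conformity of $u_h$ makes the jump terms on interior faces of $\Omega^s$ vanish; and $u_h^s\cdot n=0$ on $\Gamma^s$ — together with the fact that $\mathcal F_h^{B,s}$ consists of $\Gamma^s$-faces only, so interface faces never appear in $b_s$ — kills the remaining boundary terms. Hence $b_s(u_h^s,q_h)=0$ for all $q_h\in Q_h^s$. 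Then \cref{eq:chaabaneinterface} with $q_h=p_h^d$, $v_h=u_h^s$, followed by $\|p_h^d\|_{1,h,\Omega^d}\le\tnorm{\boldsymbol p_h^d}_{1,h,d}$ from \cref{eq:dpoincareineq-qh} and \cref{lem:tnormphduhd}, yields a bound of the desired form $\mu\kappa^{-1}\|u_h^d\|_{\Omega^d}\|u_h^s\|_{\Omega^s}$.

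For the contribution $\langle p_h^d-\bar p_h^d, u_h^s\cdot n\rangle_{\Gamma^I}$ the plan is a routine scaling argument. Applying Cauchy--Schwarz face-by-face on $\mathcal F_h^I$ with weights $h_F^{\pm 1/2}$, the pressure factor $\sum_{F\in\mathcal F_h^I}h_F^{-1}\|p_h^d-\bar p_h^d\|_F^2$ is controlled by $\tnorm{\boldsymbol p_h^d}_{1,h,d}^2$, because each interface face lies in a single $K\in\mathcal T_h^d$ with $h_F\simeq h_K$ by shape-regularity of the matching mesh; the velocity factor $\sum_{F\in\mathcal F_h^I}h_F\|u_h^s\cdot n\|_F^2$ is controlled by $\|u_h^s\|_{\Omega^s}^2$ using $|u_h^s\cdot n|\le|u_h^s|$, the discrete trace inequality \cref{eq:trace-1} on the $\Omega^s$ side, and again $h_F\simeq h_K$. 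One more application of \cref{lem:tnormphduhd} turns $\tnorm{\boldsymbol p_h^d}_{1,h,d}$ into $\mu\kappa^{-1}\|u_h^d\|_{\Omega^d}$. Adding the two contributions and collecting the (mesh- and $\Delta t$-independent) constants gives the asserted inequality.

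I expect the main obstacle to be the verification that $u_h^{s,n}\in\widetilde V_h^s$, since \cref{eq:chaabaneinterface} is only available under that discrete divergence constraint, and this step leans entirely on the exactly divergence-free and $H(\text{div})$-conforming nature of the HDG velocity. A secondary point needing care is the bookkeeping between facet-indexed and element-indexed sums across $\Gamma^I$, which uses the matching-mesh and shape-regularity assumptions to trade $h_F$ for neighboring element diameters on both the $\Omega^s$ and $\Omega^d$ sides; the rest is standard.
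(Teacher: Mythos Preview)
Your proposal is correct and follows essentially the same approach as the paper's own proof: the same splitting $\bar p_h^d = p_h^d - (p_h^d-\bar p_h^d)$, the verification that $u_h^s\in\widetilde V_h^s$ via \cref{rem:properties-hdg-fully} to invoke \cref{eq:chaabaneinterface} on the first piece, and the weighted Cauchy--Schwarz plus discrete trace inequality \cref{eq:trace-1} together with \cref{lem:tnormphduhd} on the second. The only cosmetic difference is that the paper cites \cref{eq:phdnvsuhdn} to convert $\norm{p_h^d}_{1,h,\Omega^d}$ into $\mu\kappa^{-1}\norm{u_h^d}_{\Omega^d}$, whereas you route through $\tnorm{\boldsymbol p_h^d}_{1,h,d}$ and \cref{lem:tnormphduhd} directly; these are the same estimate.
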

\begin{proof}
  For ease of notation we will drop the ``time'' superscript
  $n$. Then, note that
  \begin{equation}
    \label{eq:pbhdubhs}
    |\langle \bar{p}_h^d, \bar{u}_h^s \cdot n \rangle_{\Gamma^I}|
    \le
    |\langle \bar{p}_h^d-p_h^d, \bar{u}_h^s \cdot n \rangle_{\Gamma^I}| + |\langle p_h^d, \bar{u}_h^s \cdot n \rangle_{\Gamma^I}|
    \le
    |\langle \bar{p}_h^d-p_h^d, u_h^s \cdot n \rangle_{\Gamma^I}| + |\langle p_h^d, u_h^s \cdot n \rangle_{\Gamma^I}|.
  \end{equation}
  Since $u_h^s$ is a solution to \cref{eq:hdg-fully}, by Remark
  \ref{rem:properties-hdg-fully} we know that $\nabla\cdot u_h^s = 0$
  and $\jump{u_h^s}\cdot n = 0$ on
  $F \in \mathcal{F}_h^{int,s} \cup \mathcal{F}_h^{B,s}$ so that
  $u_h^s \in \widetilde{V}_h^s$. Therefore, using
  \cref{eq:chaabaneinterface},
  \begin{equation}
    \label{eq:phduhs}
    |\langle p_h^d, u_h^s \cdot n \rangle_{\Gamma^I}|
    \le C \norm[0]{u_h^s} _{\Omega^s}\norm[0]{p_h^d}_{1,h,\Omega^d}.
  \end{equation}
  Next, using \cref{eq:trace-1} and Lemma \ref{lem:tnormphduhd}, we
  note that
  \begin{equation}
    \label{eq:pbhdqhduhs}
    \begin{split}
      |\langle \bar{p}_h^d-p_h^d, u_h^s \cdot n \rangle_{\Gamma^I}|
      \le&
      \del[1]{\sum_{K \in \mathcal{T}_h^s}h_K\norm[0]{u_h^s\cdot n}_{\partial K}^2}^{1/2}
      \del[1]{\sum_{K \in \mathcal{T}_h^d}h_K^{-1}\norm[0]{\bar{p}_h^d - p_h^d}_{\partial K}^2}^{1/2}
      \\
      \le& C \norm[0]{u_h^s}_{\Omega^s}\del[1]{\sum_{K \in \mathcal{T}_h^d}h_K^{-1}\norm[0]{\bar{p}_h^d - p_h^d}_{\partial K}^2}^{1/2}
      \\
      \le& C \norm[0]{u_h^s}_{\Omega^s} \tnorm{\boldsymbol{p}_h^d}_{1,h,d}      
      \le C \mu \kappa^{-1} \norm[0]{u_h^s}_{\Omega^s}\norm[0]{u_h^d}_{\Omega^d}.
   \end{split}
  \end{equation}
  The result follows by combining
  \cref{eq:pbhdubhs,eq:phduhs,eq:pbhdqhduhs,eq:phdnvsuhdn}.
\end{proof}

For the remainder of this section we define
\begin{equation*}
  \boldsymbol{B}_h^s := \cbr[1]{ \boldsymbol{v}_h^s \in \boldsymbol{Z}_h^s:
    \quad \tnorm{\boldsymbol{v}_h^s}_{v,s} \le \tfrac{1}{2}\mu\min\del[1]{c_{ae}^sc_{si,2}^{-1}(c_{pq}^2 + c_{si,4}^2)^{-1}, c_{ae}c_w^{-1}}}.
\end{equation*}

\begin{lemma}
  \label{lemap:exisuniqnp1}
  For $0\leq n\leq N-1$, let
  $\boldsymbol{u}_h^{s,n} \in \boldsymbol{B}_h^s$. Then
  \cref{eq:hdg-fully} has a unique solution
  $(\boldsymbol{u}_h^{n+1}, \boldsymbol{p}_h^{n+1}) \in
  \boldsymbol{X}_h \times \boldsymbol{Q}_h$.
\end{lemma}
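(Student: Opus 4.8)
The plan is to view \cref{eq:hdg-fully}, with $n$ fixed and $\boldsymbol{u}_h^{s,n}$, $f^{s,n+1}$, $f^{d,n+1}$ treated as given data, as a square finite-dimensional linear system for $(\boldsymbol{u}_h^{n+1},\boldsymbol{p}_h^{n+1}) \in \boldsymbol{X}_h \times \boldsymbol{Q}_h$ of saddle-point type, with velocity-block bilinear form $A_n(\boldsymbol{u},\boldsymbol{v}) := \Delta t^{-1}(u,v)_{\Omega^s} + a_h(u_h^{s,n};\boldsymbol{u},\boldsymbol{v})$ and constraint form $b_h$ (the term $\Delta t^{-1}(u_h^n,v_h)_{\Omega^s}$ being moved to the right-hand side). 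Since the system is square, existence of a solution for every right-hand side is equivalent to uniqueness, so it suffices to show that the associated homogeneous problem has only the trivial solution. The proof is then largely an assembly of the machinery collected in \cref{ss:cnotation}; the only step with genuine content is verifying that the hypothesis $\boldsymbol{u}_h^{s,n}\in\boldsymbol{B}_h^s$ is exactly what is needed to invoke the coercivity estimate \cref{eq:coercivity_awhvhvh}.

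First I would record the consequences of $\boldsymbol{u}_h^{s,n} \in \boldsymbol{B}_h^s \subset \boldsymbol{Z}_h^s$ for $w := u_h^{s,n}$. From membership in $\boldsymbol{Z}_h^s$ we get $w \in X_h^s \subset X^s(h)$, $w \in H(\text{div};\Omega^s)$, and $\nabla \cdot w = 0$ on each $K \in \mathcal{T}_h^s$ (take $\bar q_h^s = 0$ in the definition of $\boldsymbol{Z}_h^s$ and use $\nabla \cdot v_h \in Q_h^s$), so that $a_h(w;\cdot,\cdot)$ is well defined and the bounds \cref{eq:ahboundedXh}, \cref{eq:coercivity_awhvhvh} are available for this $w$. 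Moreover, since $|w\cdot n| \le |w|$ pointwise on $\Gamma^I$, \cref{eq:dtrpoincareineq} with $r=2$ together with the bound in the definition of $\boldsymbol{B}_h^s$ give
\[
  \norm{w\cdot n}_{\Gamma^I} \le \norm{w}_{L^2(\Gamma^I)} \le c_{si,2}\tnorm{\boldsymbol{u}_h^{s,n}}_{v,s} \le \tfrac12\mu c_{ae}^s(c_{pq}^2+c_{si,4}^2)^{-1},
\]
which is precisely the smallness condition on the interface under which \cref{eq:coercivity_awhvhvh} holds (with $\beta$ large enough, as assumed throughout). I expect this short computation to be the main — though mild — point of the proof.

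Finally, to treat the homogeneous problem I would let $(\boldsymbol{u},\boldsymbol{p}) \in \boldsymbol{X}_h \times \boldsymbol{Q}_h$ satisfy $A_n(\boldsymbol{u},\boldsymbol{v}_h) + b_h(\boldsymbol{v}_h,\boldsymbol{p}) = 0$ for all $\boldsymbol{v}_h \in \boldsymbol{X}_h$ and $b_h(\boldsymbol{u},\boldsymbol{q}_h) = 0$ for all $\boldsymbol{q}_h \in \boldsymbol{Q}_h$. The second identity means $\boldsymbol{u} \in \boldsymbol{Z}_h$; choosing $\boldsymbol{v}_h = \boldsymbol{u}$ in the first and using $b_h(\boldsymbol{u},\boldsymbol{p}) = 0$ yields $\Delta t^{-1}\norm{u}_{\Omega^s}^2 + a_h(w;\boldsymbol{u},\boldsymbol{u}) = 0$. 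Both terms are nonnegative — the first trivially, the second by \cref{eq:coercivity_awhvhvh}, which applies since $\boldsymbol{u}\in\boldsymbol{Z}_h$ and the smallness condition on $w$ was verified above — so $c_{ae}\mu\tnorm{\boldsymbol{u}}_v^2 \le a_h(w;\boldsymbol{u},\boldsymbol{u}) = 0$, whence $\boldsymbol{u} = 0$. The first identity then reduces to $b_h(\boldsymbol{v}_h,\boldsymbol{p}) = 0$ for all $\boldsymbol{v}_h \in \boldsymbol{X}_h$, and the inf-sup bound \cref{eq:infsupbh} forces $\tnorm{\boldsymbol{p}}_p = 0$, i.e. $\boldsymbol{p} = 0$. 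Thus the homogeneous problem admits only the trivial solution, and by the finite-dimensional rank argument \cref{eq:hdg-fully} has a unique solution for the actual right-hand side.
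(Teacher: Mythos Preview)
Your proof is correct and follows essentially the same route as the paper: both hinge on verifying, via \cref{eq:dtrpoincareineq} with $r=2$, that $\boldsymbol{u}_h^{s,n}\in\boldsymbol{B}_h^s$ yields the interface smallness needed for the coercivity \cref{eq:coercivity_awhvhvh} on $\boldsymbol{Z}_h$, and then combining this with the inf-sup condition \cref{eq:infsupbh}. The only cosmetic difference is that the paper packages the conclusion by citing the abstract saddle-point theorem \cite[Theorem~3.4.3]{Boffi:book} (and so also records the boundedness estimate \cref{apeq:boundeness-of-uh-assump}), whereas you argue directly that the homogeneous square system has only the trivial solution; your route is slightly more economical since it does not require the boundedness bounds.
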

\begin{proof}
  Consider \cref{eq:hdg-fully} for the solution at time level
  $t^{n+1}$ which we write here as:
  \begin{multline}
    \label{eq:aphdg-fully}
    \tfrac{1}{\Delta t}\del[0]{ u_h^{s,n+1}, v_h}_{\Omega^s}
    + a_h(u_h^n; \boldsymbol{u}_h^{n+1}, \boldsymbol{v}_h)
    + b_h(\boldsymbol{v}_h, \boldsymbol{p}_h^{n+1})
    + b_h(\boldsymbol{u}_h^{n+1}, \boldsymbol{q}_h)
    \\
    =
    \tfrac{1}{\Delta t}\del[0]{ u_h^{s,n}, v_h}_{\Omega^s}
    + (f^{s,n+1}, v_h)_{\Omega^s} + (f^{d,n+1}, q_h)_{\Omega^d}.
  \end{multline}
  Given $\boldsymbol{u}_h^{s,n} \in \boldsymbol{B}_h^s$ we remark
  that, by \cref{eq:dtrpoincareineq} with $r=2$ and
  \cref{eq:coercivity_awhvhvh},
  \begin{equation}
    \label{eq:coercivitylhsterms}
    \tfrac{1}{\Delta t}\del[0]{ v_h, v_h}_{\Omega^s}
    + a_h(u_h^n; \boldsymbol{v}_h, \boldsymbol{v}_h)
    \ge c_{ae} \mu \tnorm{\boldsymbol{v}_h}_{v}^2
    \quad \forall \boldsymbol{v}_h \in \boldsymbol{Z}_h,
  \end{equation}
  Furthermore, by \cref{eq:ahboundedXh} and \cref{eq:dpoincareineq},
  we obtain the following boundedness result:
  \begin{equation}
    \label{apeq:boundeness-of-uh-assump}
    \tfrac{1}{\Delta t}(u_h^s, v_h)_{\Omega^s} + \envert[0]{a_h(u_h^{s,n};\boldsymbol{u}_h,\boldsymbol{v}_h)}
    \le \del[1]{\tfrac{1}{\Delta t}c_p^2 + c_f\mu}\tnorm{\boldsymbol{u}_h}_{v}\tnorm{\boldsymbol{v}_h}_{v}
    \quad \forall \boldsymbol{u}_h, \boldsymbol{v}_h \in \boldsymbol{X}_h,
  \end{equation}
  where
  \begin{equation*}
    c_f = 2c_e^2\max\del[2]{\tfrac{1}{2}\min\del[2]{c_wc_{ae}^sc_{si,2}^{-1}(c_{pq}^2 + c_{si,4}^2)^{-1}, c_{ae}} + c_{ac}^s,
      \kappa^{-1}, \alpha \kappa^{-1/2}}.
  \end{equation*}
  Here $c_f$ is an upper bound for $c_{ac}$ using that
  $\boldsymbol{u}_h^{s,n} \in \boldsymbol{B}_h^s$. Since
  $\boldsymbol{u}_h^{s,n} \in \boldsymbol{B}_h^s$, boundedness of the
  right hand side of \cref{eq:aphdg-fully} follows from the
  Cauchy--Schwarz inequality. Existence of a unique solution
  $(\boldsymbol{u}_h^{n+1}, \boldsymbol{p}_h^{n+1}) \in
  \boldsymbol{X}_h \times \boldsymbol{Q}_h$ to \cref{eq:hdg-fully} is
  now a consequence of \cref{eq:coercivitylhsterms},
  \cref{apeq:boundeness-of-uh-assump}, \cref{eq:boundsbh} and
  \cite[Theorem 3.4.3]{Boffi:book}.
\end{proof}

Lemma \ref{lemap:exisuniqnp1} guarantees existence and uniqueness of a
solution
$(\boldsymbol{u}_h^{n+1}, \boldsymbol{p}_h^{n+1}) \in \boldsymbol{X}_h
\times \boldsymbol{Q}_h$ at time level $n+1$ provided that
$\boldsymbol{u}_h^{s,n} \in \boldsymbol{B}_h^s$. However, Lemma
\ref{lemap:exisuniqnp1} does not guarantee that
$\boldsymbol{u}_h^{s,n+1} \in \boldsymbol{B}_h^s$. Therefore, the
remainder of this section is dedicated to showing that
$\boldsymbol{u}_h^{s,n+1} \in \boldsymbol{B}_h^s$ under a smallness
assumption on the data. First we obtain bounds on
$\norm[0]{d_tu_h^{s,k}}_{\Omega^s}$ and
$\Delta t^{-1/2}\tnorm{\boldsymbol{u}_h^{s,1}}_{v,s}$ (which are
proven in Lemma \ref{lem:bounddtuhuh1}) after which we prove a bound
on $\tnorm{\boldsymbol{u}_h^{k}}_v$ (see Lemma
\ref{lem:uhskbound}). The steps used to obtain these results are
similar to \cite{Chaabane:2017}. In Lemma
\ref{lem:boundednessvelocitypressure} we then impose a smallness
assumption on the data to show existence and uniqueness of the
solution
$(\boldsymbol{u}_h^{n}, \boldsymbol{p}_h^{n}) \in \boldsymbol{X}_h
\times \boldsymbol{Q}_h$ for all time levels $1 \le n \le N$.

The following lemmas will be proven in three dimensions with similar
proofs holding for two dimensions. We assume that
$f^s \in C^0(J;L^2(\Omega^s)^3)$ and $f^d \in
C^0(J;L^2(\Omega^d))$. It will furthermore be useful to define
\begin{align}
  \label{eq:Fm}
  F^m &:= \frac{c_p^2}{c_{ae}\mu}\Delta t\sum_{k=1}^m\norm[0]{d_t f^{s,k+1}}_{\Omega^s}^2
        + \frac{c_{td}^2\mu}{\kappa^2 c_{ae}}\Delta t \sum_{k=1}^m\norm[0]{d_t f^{d,k+1}}_{\Omega^d}^2,
  \\
  \label{eq:Mm}
  (M^m)^2 & := (M^0)^2 + \tfrac{1}{2}c_{ae}\mu G^2 + F^m,
\end{align}
where 
\begin{align}
  \label{eq:M02}
  M^0 & := \Big(1+c_{sdi}\big(\dfrac{\mu\Delta t}{2\kappa}\big)^{1/2}\Big)\norm[0]{f^{s,1}}_{L^2(\Omega^s)^3}
        + c_{sdi}\mu\kappa^{-1}c_{td}\norm[0]{f^{d,1}}_{L^2(\Omega^d)}
  \\
  \label{eq:G2}
  G^2 &:= \frac{1}{2c_{ae}^s}\del[2]{\del[2]{\dfrac1{\mu}+\frac{c_{sdi}^2}{2\kappa}\Delta t}\norm[0]{f^{s,1}}_{L^2(\Omega^s)^3}^2 
        + \frac{c_{sdi}^2c_{td}^2\mu}{\kappa^2} \norm[0]{f^{d,1}}_{L^2(\Omega^d)}^2}.
\end{align}

\begin{lemma}
  \label{lem:bounddtuhuh1}
  Let $\boldsymbol{u}_h^{s,0}=\boldsymbol{0}$ and let $M^0$, $G$,
  $F^m$ and $G^m$ be defined as in
  \cref{eq:M02,eq:G2,eq:Fm,eq:Mm}. Suppose that \cref{eq:hdg-fully}
  has a solution $(\boldsymbol{u}_h^k,\boldsymbol{p}_h^k)$ for all
  $1 \le k \le n$. For $k=1$,
  \begin{subequations}
    \label{eq:dtuhs1-tnormuhs1dt}
    \begin{align}
      \label{eq:dtuhs1}
      \norm[0]{d_tu_h^{s,1}}_{\Omega^s} & \le M^0,
      \\
      \label{eq:tnormuhs1dt}
      \frac{1}{(\Delta t)^{1/2}}\tnorm{\boldsymbol{u}_h^{s,1}}_{v,s} & \le G.
    \end{align}
  \end{subequations}
  Furthermore, if $\boldsymbol{u}_h^{s,k} \in \boldsymbol{B}_h^s$ for all
  $0 \le k \le n$, with $1 \le n \le N-1$, then
  \begin{equation}
    \label{eq:dtuhsnp1bound}
    \norm[0]{d_t u_h^{s,n+1}}_{\Omega^s} \le M^n.
  \end{equation}
\end{lemma}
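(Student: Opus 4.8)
I plan to establish the three bounds by energy arguments: test \cref{eq:hdg-fully} (or its time-difference) against suitable $(\boldsymbol v_h,\boldsymbol q_h)$ and exploit three structural facts throughout. First, since $\boldsymbol u_h^{s,0}=\boldsymbol 0$ one has $a_h(u_h^0;\cdot,\cdot)=a_h^L(\cdot,\cdot)$, because $t_h(0;\cdot,\cdot)=0$. Second, by \cref{rem:properties-hdg-fully} every $u_h^{s,k}$ — and hence every difference quotient $d_tu_h^{s,k}$ — is elementwise divergence-free in $\Omega^s$, vanishes normally on $\Gamma^s$, is globally $H(\mathrm{div})$-conforming, and satisfies $u_h^{s,k}\cdot n=\bar u_h^{s,k}\cdot n$ on $\Gamma^I$. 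Third, putting $\boldsymbol v_h^s=\boldsymbol 0$, $\boldsymbol q_h=\boldsymbol 0$ in \cref{eq:hdg-fully} gives the discrete Darcy momentum identity $b_h^d(v_h,\boldsymbol p_h^{d,k})=-(\mu\kappa^{-1}u_h^{d,k},v_h)_{\Omega^d}$; subtracting two time levels, the same holds for $(d_t\boldsymbol p_h^{d,k},d_tu_h^{d,k})$, so \cref{lem:tnormphduhd} and \cref{eq:phdnvsuhdn} apply verbatim to the difference quotients. Combining the second and third facts yields the algebraic identity I will lean on: for any $\boldsymbol\phi_h\in\boldsymbol X_h$ whose $\Omega^s$-part is elementwise divergence-free, vanishes normally on $\Gamma^s$, and matches $\bar\phi_h$ normally on $\Gamma^I$,
\[
 b_h(\boldsymbol\phi_h,\boldsymbol p_h^{k}) = -\mu\kappa^{-1}(u_h^{d,k},\phi_h^d)_{\Omega^d} + \langle\bar p_h^{d,k},\phi_h^s\cdot n\rangle_{\Gamma^I},
\]
so that, after the Darcy bulk term cancels the $a^d$-part of $a_h^L$, only $\langle\bar p_h^{d,k},\cdot\rangle_{\Gamma^I}$ survives — and that is precisely what \cref{lem:phbduhsnGammaI} controls.

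For $k=1$: a basic energy estimate, testing at $n=0$ with $(\boldsymbol u_h^1,-\boldsymbol p_h^1)$ (the $b_h$-terms cancel), using \cref{eq:ahcoercivity}, \cref{eq:phdnvsuhdn} and Young's inequality, gives
\[
 \tfrac1{2\Delta t}\norm{u_h^{s,1}}_{\Omega^s}^2 + \mu c_{ae}^s\tnorm{\boldsymbol u_h^{s,1}}_{v,s}^2 + \tfrac{\mu}{2\kappa}\norm{u_h^{d,1}}_{\Omega^d}^2 \le \tfrac{\Delta t}{2}\norm{f^{s,1}}_{\Omega^s}^2 + \tfrac{c_{td}^2\mu}{2\kappa}\norm{f^{d,1}}_{\Omega^d}^2,
\]
hence $\norm{u_h^{d,1}}_{\Omega^d}\le(\kappa\Delta t/\mu)^{1/2}\norm{f^{s,1}}_{\Omega^s}+c_{td}\norm{f^{d,1}}_{\Omega^d}$. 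For \eqref{eq:dtuhs1} I would then test at $n=0$ with $(d_t\boldsymbol u_h^1,\boldsymbol 0)=(\Delta t^{-1}\boldsymbol u_h^1,\boldsymbol 0)$: the algebraic identity above with $\boldsymbol\phi_h=d_t\boldsymbol u_h^1$, the $a^d$-cancellation, and $a_h^s(\boldsymbol u_h^{s,1},d_t\boldsymbol u_h^{s,1})=\Delta t^{-1}a_h^s(\boldsymbol u_h^{s,1},\boldsymbol u_h^{s,1})\ge 0$ (likewise $a^I$) leave $\norm{d_tu_h^{s,1}}_{\Omega^s}^2\le (f^{s,1},d_tu_h^{s,1})_{\Omega^s}-\langle\bar p_h^{d,1},d_tu_h^{s,1}\cdot n\rangle_{\Gamma^I}$; bounding the last term by \cref{lem:phbduhsnGammaI} rescaled by $\Delta t^{-1}$, dividing out $\norm{d_tu_h^{s,1}}_{\Omega^s}$, and inserting the bound on $\norm{u_h^{d,1}}_{\Omega^d}$ gives $\norm{d_tu_h^{s,1}}_{\Omega^s}\le\norm{f^{s,1}}_{\Omega^s}+c_{sdi}\mu\kappa^{-1}\norm{u_h^{d,1}}_{\Omega^d}\le M^0$. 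For \eqref{eq:tnormuhs1dt} I would test at $n=0$ with $(\boldsymbol u_h^1,\boldsymbol 0)$, run the same manipulations to reach $\mu c_{ae}^s\tnorm{\boldsymbol u_h^{s,1}}_{v,s}^2\le\norm{u_h^{s,1}}_{\Omega^s}(\norm{f^{s,1}}_{\Omega^s}+c_{sdi}\mu\kappa^{-1}\norm{u_h^{d,1}}_{\Omega^d})$, substitute $\norm{u_h^{s,1}}_{\Omega^s}=\Delta t\,\norm{d_tu_h^{s,1}}_{\Omega^s}$ and the two bounds just obtained, and divide by $\Delta t$ to land on \eqref{eq:tnormuhs1dt} with the $G$ of \cref{eq:G2}.

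For $1\le n\le N-1$ (estimate \eqref{eq:dtuhsnp1bound}), following \cite{Chaabane:2017}, I would subtract \cref{eq:hdg-fully} at levels $n+1$ and $n$, divide by $\Delta t$, and test with $(d_t\boldsymbol u_h^{n+1},\boldsymbol 0)$. The time term gives $\ge\tfrac1{2\Delta t}(\norm{d_tu_h^{s,n+1}}_{\Omega^s}^2-\norm{d_tu_h^{s,n}}_{\Omega^s}^2)$; and $\Delta t^{-1}(a_h(u_h^n;\boldsymbol u_h^{n+1},\cdot)-a_h(u_h^{n-1};\boldsymbol u_h^n,\cdot))=a_h(u_h^n;d_t\boldsymbol u_h^{n+1},\cdot)+\Delta t^{-1}(t_h(u_h^n;\boldsymbol u_h^n,\cdot)-t_h(u_h^{n-1};\boldsymbol u_h^n,\cdot))$. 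Diagonally, $a_h(u_h^n;d_t\boldsymbol u_h^{n+1},d_t\boldsymbol u_h^{n+1})\ge\tfrac12\mu c_{ae}^s\tnorm{d_t\boldsymbol u_h^{s,n+1}}_{v,s}^2+\mu\kappa^{-1}\norm{d_tu_h^{d,n+1}}_{\Omega^d}^2$, since $d_tu_h^{s,n+1}$ is elementwise divergence-free in $\Omega^s$ and, by \cref{eq:dtrpoincareineq} and $\boldsymbol u_h^{s,n}\in\boldsymbol B_h^s$, $\norm{u_h^{s,n}\cdot n}_{\Gamma^I}\le c_{si,2}\tnorm{\boldsymbol u_h^{s,n}}_{v,s}\le\tfrac12\mu c_{ae}^s(c_{pq}^2+c_{si,4}^2)^{-1}$, so the sign/smallness argument underlying \cref{eq:coercivity_awhvhvh} applies (only its $\Omega^s$- and $L^2(\Omega^d)$-parts are needed; the $\Omega^d$-divergence of $d_tu_h^{d,n+1}$ is harmless data by \cref{rem:properties-hdg-fully}). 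The pressure term: $d_t\boldsymbol u_h^{n+1}$ satisfies the differenced constraint $b_h(d_t\boldsymbol u_h^{n+1},\boldsymbol q_h)=(d_tf^{d,n+1},q_h)_{\Omega^d}$, so $b_h(d_t\boldsymbol u_h^{n+1},d_t\boldsymbol p_h^{n+1})=(d_tf^{d,n+1},d_tp_h^{d,n+1})_{\Omega^d}$, which \cref{eq:phdnvsuhdn} (for the difference quotients) and Young's inequality bound by $\tfrac{\mu}{2\kappa}\norm{d_tu_h^{d,n+1}}_{\Omega^d}^2+\tfrac{c_{td}^2\mu}{2\kappa}\norm{d_tf^{d,n+1}}_{\Omega^d}^2$, the first absorbed into the $a^d$-term. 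The cubic convective remainder I bound by \cref{eq:boundedness_th}, $\norm{d_tu_h^{s,n}}_{1,h,\Omega^s}\le\tnorm{d_t\boldsymbol u_h^{s,n}}_{v,s}$ and $c_w\tnorm{\boldsymbol u_h^{s,n}}_{v,s}\le\tfrac12\mu c_{ae}$ (from $\boldsymbol u_h^{s,n}\in\boldsymbol B_h^s$), giving $\tfrac12\mu c_{ae}\tnorm{d_t\boldsymbol u_h^{s,n}}_{v,s}\tnorm{d_t\boldsymbol u_h^{s,n+1}}_{v,s}$; at $n=1$ the ``previous'' factor is $\Delta t^{-1}\tnorm{\boldsymbol u_h^{s,1}}_{v,s}\le\Delta t^{-1/2}G$ (by \eqref{eq:tnormuhs1dt} and $\boldsymbol u_h^{s,0}=\boldsymbol 0$), and Young's inequality then contributes, after the $2\Delta t$-multiplication below, exactly the term $\tfrac12 c_{ae}\mu G^2$ of $(M^n)^2$. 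The force term $(d_tf^{s,n+1},d_tu_h^{s,n+1})_{\Omega^s}\le c_p\norm{d_tf^{s,n+1}}_{\Omega^s}\tnorm{d_t\boldsymbol u_h^{s,n+1}}_{v,s}$ (by \cref{eq:dpoincareineq}) is Young-split against the leftover of $\mu c_{ae}^s\tnorm{d_t\boldsymbol u_h^{s,n+1}}_{v,s}^2$, producing the $\norm{d_tf^{s,k+1}}_{\Omega^s}^2$-part of $F^n$. Multiplying by $2\Delta t$, summing in $n$ (the time term telescopes), inserting $\norm{d_tu_h^{s,1}}_{\Omega^s}^2\le(M^0)^2$ from \eqref{eq:dtuhs1}, and absorbing the carried $\Delta t\tnorm{d_t\boldsymbol u_h^{s,n}}_{v,s}^2$ terms — possible with no time-step restriction because $\tfrac12 c_{ae}<c_{ae}^s$, so a discrete Gronwall with trivial multiplier suffices — yields $\norm{d_tu_h^{s,n+1}}_{\Omega^s}^2\le(M^0)^2+\tfrac12 c_{ae}\mu G^2+F^n=(M^n)^2$, i.e.\ \eqref{eq:dtuhsnp1bound}.

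The crux is twofold. At $n=0$ the pressure must be funneled through $\langle\bar p_h^{d,1},\cdot\rangle_{\Gamma^I}$ only: the naive route through the $n=0$ constraint $b_h(\boldsymbol u_h^1,\cdot)=(f^{d,1},\cdot)_{\Omega^d}$ produces a spurious $\Delta t^{-1}$ (because $\boldsymbol u_h^{s,0}=\boldsymbol 0$), whereas testing with $(d_t\boldsymbol u_h^1,\boldsymbol 0)$ plus the algebraic identity avoids it — which is exactly why \cref{lem:phbduhsnGammaI}, rather than a plain inf--sup bound, is indispensable here. And for $n\ge 1$, the discrete-Gronwall bookkeeping of the cubic remainder — isolating the $n=1$ level to produce the constant $\tfrac12 c_{ae}\mu G^2$ and absorbing every $\tnorm{d_t\boldsymbol u_h^{s,\cdot}}_{v,s}^2$ term without a CFL condition — closes only because the smallness built into $\boldsymbol B_h^s$ makes both $\norm{u_h^{s,n}\cdot n}_{\Gamma^I}$ and $c_w\tnorm{\boldsymbol u_h^{s,n}}_{v,s}$ small relative to the coercivity constants.
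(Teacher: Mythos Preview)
Your proof is correct and follows essentially the same strategy as the paper: an energy estimate at $n=0$ (testing with $(\boldsymbol u_h^1,-\boldsymbol p_h^1)$) to control $\norm{u_h^{d,1}}_{\Omega^d}$, a second test to isolate the interface pressure contribution $\langle\bar p_h^{d,1},u_h^{s,1}\cdot n\rangle_{\Gamma^I}$ (handled via \cref{lem:phbduhsnGammaI}), and then for $n\ge 1$ differencing two consecutive time levels, testing with the incremental velocity, bounding the cubic convective remainder via \cref{eq:boundedness_th} and the $\boldsymbol B_h^s$ smallness, and telescoping. The only tactical differences are that the paper, for $k=1$, tests directly with the \emph{partial} choice $\boldsymbol v_h^s=\boldsymbol u_h^{s,1}$, $v_h^d=0$, $\boldsymbol q_h^s=-\boldsymbol p_h^{s,1}$, $q_h^d=0$, $\bar q_h^d=-\bar p_h^{d,1}$ (which produces the interface term $b_h^d(u_h^{d,1},(0,\bar p_h^{d,1}))$ immediately), whereas you test with the full $(\boldsymbol u_h^1,\boldsymbol 0)$ and reach the same place through your algebraic identity for $b_h(\boldsymbol\phi_h,\boldsymbol p_h^k)$; and for $n\ge 1$ the paper tests with $(\delta\boldsymbol u_h^{n+1},-\delta\boldsymbol p_h^{n+1})$ so that the $b_h$ terms cancel and $(\delta f^{d,n+1},\delta p_h^{d,n+1})$ appears on the right, which is equivalent to your route through the differenced constraint. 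Your Young-parameter choices yield constants that do not exactly reproduce the specific $M^0$, $G$, $F^m$ of \cref{eq:M02,eq:G2,eq:Fm} (e.g.\ your $\norm{u_h^{d,1}}_{\Omega^d}$ bound carries $\kappa\Delta t/\mu$ rather than the paper's $\kappa\Delta t/(2\mu)$, and your $f^d$ coefficient in the $n\ge1$ step is $c_{td}^2\mu/\kappa$ rather than $c_{td}^2\mu/(\kappa^2 c_{ae})$); these are easily reconciled by matching the paper's Young parameters ($\psi=2/\Delta t$ in the first estimate, $\phi=c_{ae}\mu$ in the second), but the structural argument is identical.
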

\begin{proof}
  We first prove \cref{eq:dtuhs1-tnormuhs1dt}. Choose
  $\boldsymbol{v}_h^s = \boldsymbol{u}_h^{s,1}$,
  $\boldsymbol{q}_h^s = -\boldsymbol{p}_h^{s,1}$, $v_h^d = 0$,
  $q_h^d=0$, and $\bar{q}_h^d = -\bar{p}_h^{d,1}$ in
  \cref{eq:hdg-fully}. At $n=0$, since $u_h^{s,0}=0$, this reduces to:
  \begin{equation}
    \label{eq:simpuhs1dt}
    \tfrac{1}{\Delta t} \norm[0]{u_h^{s,1}}_{\Omega^s}^2
    + a_h^s(\boldsymbol{u}_h^{s,1}, \boldsymbol{u}_h^{s,1})
    + a^I(\bar{u}_h^{s,1}, \bar{u}_h^{s,1})
    = (f^{s,1}, u_h^{s,1})_{\Omega^s} + b_h^d(u_h^{d,1}, (0, \bar{p}_h^{d,1})).
  \end{equation}
  We bound the second term on the right hand side:
  \begin{equation*}
    \begin{split}
      |b_h^d(u_h^{d,1}, (0, \bar{p}_h^{d,1}))|
      &= |\langle \bar{p}_h^{d,1}, u_h^{d,1} \cdot n^d \rangle_{\partial \mathcal{T}_h^d}|
      = |\langle \bar{p}_h^{d,1}, u_h^{d,1} \cdot n^d \rangle_{\Gamma^I}|
      \\
      &= |\langle \bar{p}_h^{d,1}, u_h^{s,1} \cdot n \rangle_{\Gamma^I}|
      \le c_{sdi}\mu\kappa^{-1} \norm[0]{u_h^{d,1}}_{\Omega^d} \norm[0]{u_h^{s,1}}_{\Omega^s},      
    \end{split}
  \end{equation*}
  where the first equality is by definition, the second equality is
  because $\bar{p}_h^{d,1}$ and $u_h^{d,1} \cdot n^d$ are
  single-valued on $F \in \mathcal{F}_h^{int,d}$,
  $\bar{p}_h^{d,1} = 0$ on $\Gamma_D^d$ and $u_h^{d,1} \cdot n^d = 0$
  on $\Gamma_N^d$, and the third equality is because
  $u_h^1 \in H(\text{div};\Omega)$ (see Remark
  \ref{rem:properties-hdg-fully}). Finally, the inequality is by Lemma
  \ref{lem:phbduhsnGammaI}. Combining this with \cref{eq:simpuhs1dt},
  the coercivity of $a_h^s$ and $a^I$ \cref{eq:ahcoercivity}, and the
  Cauchy--Schwarz inequality,
  \begin{equation}
    \label{eq:simpuhs1dtcoerc}
    \tfrac{1}{\Delta t} \norm[0]{u_h^{s,1}}_{\Omega^s}^2
    + \mu c_{ae}^s \tnorm{\boldsymbol{u}_h^{s,1}}_{v,s}^2
    + \alpha\mu\kappa^{-1/2}\norm[0]{ (\bar{u}_h^{s,1})^t }_{\Gamma^I}^2
    \le \norm[0]{f^{s,1}}_{\Omega^s} \norm[0]{u_h^{s,1}}_{\Omega^s}
    + c_{sdi}\mu\kappa^{-1} \norm[0]{u_h^{d,1}}_{\Omega^d} \norm[0]{u_h^{s,1}}_{\Omega^s},
  \end{equation}
  directly implying, after ignoring the non-negative second and third
  terms on the left hand side, and canceling 
  $\norm[0]{u_h^{s,1}}_{\Omega^s}$,
  \begin{equation}
    \label{eq:simpuhs1dtcoerc-imp}
    \norm[0]{d_tu_h^{s,1}}_{\Omega^s}
    \le \norm[0]{f^{s,1}}_{\Omega^s}
    + c_{sdi}\mu\kappa^{-1} \norm[0]{u_h^{d,1}}_{\Omega^d}.
  \end{equation}
  
  Furthermore, applying Young's inequality to both terms on the right
  hand side of \cref{eq:simpuhs1dtcoerc} we also find:
  \begin{equation}
    \label{eq:simpuhs1dtcoerc-young1}
    \tfrac{1}{\Delta t} \norm[0]{u_h^{s,1}}_{\Omega^s}^2
    + \mu c_{ae}^s \tnorm{\boldsymbol{u}_h^{s,1}}_{v,s}^2
    + \alpha\mu\kappa^{-1/2}\norm[0]{ (\bar{u}_h^{s,1})^t }_{\Gamma^I}^2
    \le \frac{1}{2\psi} \norm[0]{f^{s,1}}_{\Omega^s}^2
    + \frac{c_{sdi}^2\mu^2}{2\kappa^2\psi} \norm[0]{u_h^{d,1}}_{\Omega^d}^2 + \psi\norm[0]{u_h^{s,1}}_{\Omega^s}^2.      
  \end{equation}
  Choosing $\psi = 1/\Delta t$ and reordering,
  \begin{equation}
    \label{eq:simpuhs1dtcoerc-young3}
    \frac{1}{\Delta t}\tnorm{\boldsymbol{u}_h^{s,1}}_{v,s}^2
    \le \frac{1}{2\mu c_{ae}^s} \norm[0]{f^{s,1}}_{\Omega^s}^2
    + \frac{c_{sdi}^2\mu}{2\kappa^2 c_{ae}^s} \norm[0]{u_h^{d,1}}_{\Omega^d}^2.
  \end{equation}
  To further bound
  \cref{eq:simpuhs1dtcoerc-imp,eq:simpuhs1dtcoerc-young3} we require a
  bound on $\norm[0]{u_h^{d,1}}_{\Omega^d}^2$. To obtain this bound,
  we set $n=0$ and choose
  $(\boldsymbol{v}_h, \boldsymbol{q}_h) = (\boldsymbol{u}_h^{1},
  -\boldsymbol{p}_h^{1})$ in \cref{eq:hdg-fully} and recall that
  $u_h^{s,0}=0$ to find
  \begin{equation*}
    \del[0]{ d_tu_h^{s,1}, u_h^{s,1}}_{\Omega^s}
    + a_h^L(\boldsymbol{u}_h^{1}, \boldsymbol{u}_h^{1})
    = (f^{s,1}, u_h^{s,1})_{\Omega^s} - (f^{d,1}, p_h^{d,1})_{\Omega^d}.
  \end{equation*}  
  Using that
  $\del[0]{ d_tu_h^{s,1}, u_h^{s,1}}_{\Omega^s} = \Delta
  t^{-1}\norm[0]{u_h^{s,1}}_{\Omega^s}^2$, the Cauchy--Schwarz
  inequality, \cref{eq:ahcoercivity,eq:phdnvsuhdn}, and Young's
  inequality,
  \begin{equation}
    \label{eq:snp1minuhsn-1}
    \begin{split}
    \tfrac{1}{\Delta t}\norm[0]{u_h^{s,1}}_{\Omega^s}^2
    + c_{ae}^s\mu \tnorm{\boldsymbol{u}_h^{s,1}}_{v,s}^2 + \mu\kappa^{-1}\norm[0]{u_h^{d,1}}_{\Omega^d}^2
    &\le \norm[0]{f^{s,1}}_{\Omega^s}\norm[0]{u_h^{s,1}}_{\Omega^s}
    + \norm[0]{f^{d,1}}_{\Omega^d} c_{td} \mu \kappa^{-1}\norm[0]{u_h^{d,1}}_{\Omega^d}
    \\
    &\le \frac{1}{2\psi}\norm[0]{f^{s,1}}_{\Omega^s}^2 + \frac{\psi}{2}\norm[0]{u_h^{s,1}}_{\Omega^s}^2
    +\frac{c_{td}^2\mu^2}{2\kappa^2\phi}\norm[0]{f^{d,1}}_{\Omega^d}^2 + \frac{\phi}{2}\norm[0]{u_h^{d,1}}_{\Omega^d}^2.
    \end{split}
  \end{equation}
  Choosing $\psi = 2/\Delta t$ and $\phi = \mu\kappa^{-1}$, we find
  from \cref{eq:snp1minuhsn-1}, after reordering, that
  \begin{equation}
    \label{eq:snp1minuhsnphdtouhd10afteryoung}
    \norm[0]{u_h^{d,1}}_{\Omega^d}^2
    \le \frac{\kappa\Delta t}{2\mu}\norm[0]{f^{s,1}}_{L^2(\Omega^s)^3}^2
    + c_{td}^2\norm[0]{f^{d,1}}_{L^2(\Omega^d)^3}^2.
  \end{equation}
  \Cref{eq:dtuhs1} follows from
  \cref{eq:snp1minuhsnphdtouhd10afteryoung} and
  \cref{eq:simpuhs1dtcoerc-imp}. \Cref{eq:tnormuhs1dt} follows from
  \cref{eq:snp1minuhsnphdtouhd10afteryoung} and
  \cref{eq:simpuhs1dtcoerc-young3}.
  \\ \\
  We proceed with proving \cref{eq:dtuhsnp1bound}. Let
  $1\le n \le N-1$. Consider \cref{eq:hdg-fully} at time levels $n+1$
  and $n$:
  \begin{align*}
    \del[0]{ d_tu_h^{s,n+1}, v_h^s}_{\Omega^s}
    + a_h(u_h^n; \boldsymbol{u}_h^{n+1}, \boldsymbol{v}_h)
    + b_h(\boldsymbol{v}_h, \boldsymbol{p}_h^{n+1})
    + b_h(\boldsymbol{u}_h^{n+1}, \boldsymbol{q}_h)
    &= (f^{s,n+1}, v_h)_{\Omega^s} + (f^{d,n+1}, q_h)_{\Omega^d},
    \\
    \del[0]{ d_tu_h^{s,n}, v_h^s}_{\Omega^s}
    + a_h(u_h^{n-1}; \boldsymbol{u}_h^{n}, \boldsymbol{v}_h)
    + b_h(\boldsymbol{v}_h, \boldsymbol{p}_h^{n})
    + b_h(\boldsymbol{u}_h^{n}, \boldsymbol{q}_h)
    &= (f^{s,n}, v_h)_{\Omega^s} + (f^{d,n}, q_h)_{\Omega^d}.
  \end{align*}
  Subtracting the latter from the former, choosing
  $(\boldsymbol{v}_h, \boldsymbol{q}_h) = (\delta
  \boldsymbol{u}_h^{n+1}, - \delta \boldsymbol{p}_h^{n+1})$, and
  noting that
  \begin{multline*}
    t_h(u_h^{s,n}; \boldsymbol{u}_h^{n+1}, \delta \boldsymbol{u}_h^{n+1})
    - t_h(u_h^{s,n-1}; \boldsymbol{u}_h^{n}, \delta \boldsymbol{u}_h^{n+1})
    \\
    =  t_h(u_h^{s,n}; \delta \boldsymbol{u}_h^{n+1}, \delta \boldsymbol{u}_h^{n+1})
       + t_h(u_h^{s,n}; \boldsymbol{u}_h^{n}, \delta \boldsymbol{u}_h^{n+1})
       - t_h(u_h^{s,n-1}; \boldsymbol{u}_h^{n}, \delta \boldsymbol{u}_h^{n+1}),
  \end{multline*}
  we find
  \begin{multline}
    \label{eq:deltauhsnp1s}
    \frac{1}{\Delta t}\del[0]{ \delta u_h^{s,n+1}- \delta u_h^{s,n}, \delta u_h^{s,n+1}}_{\Omega^s}
    + t_h(u_h^{s,n}; \boldsymbol{u}_h^{n}, \delta \boldsymbol{u}_h^{n+1})
    - t_h(u_h^{s,n-1}; \boldsymbol{u}_h^{n}, \delta \boldsymbol{u}_h^{n+1})    
    \\
    + a_h(u_h^{s,n}; \delta \boldsymbol{u}_h^{n+1}, \delta \boldsymbol{u}_h^{n+1})    
    = (\delta f^{s,n+1}, \delta u_h^{s,n+1})_{\Omega^s} + (\delta f^{d,n+1}, \delta p_h^{d,n+1})_{\Omega^d}.          
  \end{multline}
  \Cref{eq:dtrpoincareineq} and
  $\boldsymbol{u}_h^{s,k} \in \boldsymbol{B}_h^s$ imply
  $\norm[0]{u_h^k \cdot n}_{\Gamma^I} \le \tfrac{1}{2} \mu
  c_{ae}^s(c_{pq}^2 + c_{si,4}^2)^{-1}$ for $0 \le k \le n$. Therefore,
  coercivity of $a_h$ \cref{eq:coercivity_awhvhvh} holds. Also using
  the Cauchy--Schwarz inequality and \cref{eq:dpoincareineq}:
  \begin{multline}
    \label{eq:deltauhsnp1sc}
    \frac{1}{\Delta t}\del[0]{ \delta u_h^{s,n+1}- \delta u_h^{s,n}, \delta u_h^{s,n+1}}_{\Omega^s}
    + t_h(u_h^{s,n}; \boldsymbol{u}_h^{n}, \delta \boldsymbol{u}_h^{n+1})
    - t_h(u_h^{s,n-1}; \boldsymbol{u}_h^{n}, \delta \boldsymbol{u}_h^{n+1})
    \\
    + c_{ae}\mu \tnorm{\delta \boldsymbol{u}_h^{n+1}}_v^2
    \le c_p \norm[0]{\delta f^{s,n+1}}_{\Omega^s} \tnorm{\delta \boldsymbol{u}_h^{s,n+1}}_{v,s}
    + \norm[0]{\delta f^{d,n+1}}_{\Omega^d} \norm[0]{\delta p_h^{d,n+1}}_{\Omega^d}.          
  \end{multline}
  A simple modification of the proof of Lemma \ref{lem:tnormphduhd}
  allows us to show that
  $\norm[0]{\delta p_h^{d,n+1}}_{1,h,\Omega^d} \le C \mu\kappa^{-1}
  \norm[0]{\delta u_h^{d,n+1}}_{\Omega^d}$. Then, following the same
  steps used to find \cref{eq:phdnvsuhdn},
  $\norm[0]{\delta p_h^{d,n+1}}_{\Omega^d} \le c_{td} \mu
  \kappa^{-1}\norm[0]{\delta u_h^{d,n+1}}_{\Omega^d}$ so that
  \begin{equation}
    \begin{split}
      \label{eq:deltauhsnp1scf}
      \frac{1}{\Delta t}( \delta & u_h^{s,n+1} - \delta u_h^{s,n}, \delta u_h^{s,n+1})_{\Omega^s}
      + c_{ae}\mu \tnorm{\delta \boldsymbol{u}_h^{n+1}}_v^2
      \\
      \le & c_p \norm[0]{\delta f^{s,n+1}}_{\Omega^s} \tnorm{\delta \boldsymbol{u}_h^{s,n+1}}_{v,s}
      + c_{td}\mu\kappa^{-1} \norm[0]{\delta f^{d,n+1}}_{\Omega^d} \norm[0]{\delta u_h^{d,n+1}}_{\Omega^d}
      \\
      & + |t_h(u_h^{s,n}; \boldsymbol{u}_h^{s,n}, \delta \boldsymbol{u}_h^{s,n+1})
      - t_h(u_h^{s,n-1}; \boldsymbol{u}_h^{s,n}, \delta \boldsymbol{u}_h^{s,n+1})|.
    \end{split}
  \end{equation}
  To bound the convective terms we use \cref{eq:boundedness_th},
  \cref{eq:dpoincareineq}, and Young's inequality:
  \begin{align*}
    |t_h(u_h^{s,n}; \boldsymbol{u}_h^{n}, \delta \boldsymbol{u}_h^{n+1})
    - t_h(u_h^{s,n-1}; \boldsymbol{u}_h^{n}, \delta \boldsymbol{u}_h^{n+1})|
    &\le c_w \tnorm{\delta \boldsymbol{u}_h^{s,n}}_{v,s} \tnorm{\boldsymbol{u}_h^{s,n}}_{v,s} \tnorm{\delta \boldsymbol{u}_h^{s,n+1}}_{v,s}
    \\
    &\le \frac{c_w}{2}\tnorm{\boldsymbol{u}_h^{s,n}}_{v,s}\tnorm{\delta \boldsymbol{u}_h^{s,n}}_{v,s}^2
     + \frac{c_w}{2}\tnorm{\boldsymbol{u}_h^{s,n}}_{v,s} \tnorm{\delta \boldsymbol{u}_h^{s,n+1}}_{v,s}^2.
  \end{align*}
  Applying Young's inequality to the first two terms on the right hand
  side of \cref{eq:deltauhsnp1scf},
  \begin{align*}
    c_p& \norm[0]{\delta f^{s,n+1}}_{\Omega^s} \tnorm{\delta \boldsymbol{u}_h^{s,n+1}}_{v,s} +  c_{td}\mu\kappa^{-1} \norm[0]{\delta f^{d,n+1}}_{\Omega^d} \norm[0]{\delta u_h^{d,n+1}}_{\Omega^d}
    \\
    \le& \frac{c_p^2}{2\phi}\norm[0]{\delta f^{s,n+1}}_{\Omega^s}^2 +  \frac{c_{td}^2\mu^2}{2\kappa^2 \phi}\norm[0]{\delta f^{d,n+1}}_{\Omega^d}^2 
    +\frac{\phi}{2} \Big(\tnorm{\delta \boldsymbol{u}_h^{s,n+1}}_{v,s}^2+ \tnorm{\delta \boldsymbol{u}_h^{d,n+1}}_{v,d}^2\Big),
  \end{align*}
  and choosing $\phi = c_{ae}\mu$, we find after combining with
  \cref{eq:deltauhsnp1scf} that
  \begin{multline}
    \label{eq:deltauhsnp1scfby}
    \frac{1}{\Delta t}\del[0]{ \delta u_h^{s,n+1}- \delta u_h^{s,n}, \delta u_h^{s,n+1}}_{\Omega^s}
    + \del[1]{\tfrac{1}{2}c_{ae}\mu - \tfrac{c_w}{2}\tnorm{\boldsymbol{u}_h^{s,n}}_{v,s}} \tnorm{\delta \boldsymbol{u}_h^{n+1}}_v^2
    \\
    \le \frac{c_p^2}{2c_{ae}\mu}\norm[0]{\delta f^{s,n+1}}_{\Omega^s}^2
    + \frac{c_{td}^2\mu}{2\kappa^2 c_{ae}}\norm[0]{\delta f^{d,n+1}}_{\Omega^d}^2
    + \frac{c_w}{2} \tnorm{\boldsymbol{u}_h^{s,n}}_{v,s} \tnorm{\delta \boldsymbol{u}_h^{s,n}}_{v,s}^2.    
  \end{multline}
  Multiplying both sides by 2, using the assumption that
  $\boldsymbol{u}_h^{s,n} \in \boldsymbol{B}_h^s$, that
  $a(a-b) \ge \tfrac{1}{2}(a^2 - b^2)$, and that
  $\tnorm{\delta \boldsymbol{u}_h^{n+1}}_{v,s} \le \tnorm{\delta
    \boldsymbol{u}_h^{n+1}}_v$,
  \begin{multline}
    \label{eq:deltauhsnp1scfbytphi32}
    \frac{1}{\Delta t} \norm[0]{\delta u_h^{s,n+1}}_{\Omega^s}^2 - \frac{1}{\Delta t} \norm[0]{\delta u_h^{s,n}}_{\Omega^s}^2
    + \tfrac{1}{2}c_{ae}\mu \tnorm{\delta \boldsymbol{u}_h^{n+1}}_{v,s}^2
    \\
    \le \frac{c_p^2}{c_{ae}\mu}\norm[0]{\delta f^{s,n+1}}_{\Omega^s}^2
    + \frac{c_{td}^2\mu}{\kappa^2 c_{ae}}\norm[0]{\delta f^{d,n+1}}_{\Omega^d}^2
    + \tfrac{1}{2}c_{ae}\mu \tnorm{\delta \boldsymbol{u}_h^{s,n}}_{v,s}^2.    
  \end{multline}
  Replacing $n$ by $k$, summing for $k=1$ to $k=n$, using that
  $d_tu_h^{s,n+1} = \Delta t^{-1}\delta u_h^{s,n+1}$ and that
  $\delta \boldsymbol{u}_h^{s,1} = \boldsymbol{u}_h^{s,1}$ (because
  $\boldsymbol{u}_h^{s,0}=\boldsymbol{0}$), and the definition of
  $F^n$ (see \cref{eq:Fm}):
  \begin{equation}
    \norm[0]{d_t u_h^{s,n+1}}_{\Omega^s}^2
    \le
    \norm[0]{d_t u_h^{s,1}}_{\Omega^s}^2
    + \frac{c_{ae}\mu}{2\Delta t} \tnorm{\boldsymbol{u}_h^{s,1}}_{v,s}^2
    + F^n.
  \end{equation}
  \Cref{eq:dtuhsnp1bound} now follows by inserting
  \cref{eq:dtuhs1,eq:tnormuhs1dt} into the above inequality.
\end{proof}

We are now ready to prove a bound on $\boldsymbol{u}_h^{n+1}$.

\begin{lemma}
  \label{lem:uhskbound}
  Let $\boldsymbol{u}_h^{s,0}=\boldsymbol{0}$, and let $M^n$ be
  defined as in \cref{eq:M02,eq:Mm} for $0 \le n \le N-1$. If
  \cref{eq:hdg-fully} has a solution
  $(\boldsymbol{u}_h^k,\boldsymbol{p}_h^k)$ for all $0 \le k \le n$
  such that $\boldsymbol{u}_h^{s,k} \in \boldsymbol{B}_h^s$, then
  \begin{equation}
    \label{eq:uhnp1vtnormbound}
    \tnorm{\boldsymbol{u}_h^{n+1}}_{v}^2
    \le
    \frac{2}{c_{ae}\mu} \del[2]{
    \frac{c_p^2}{c_{ae}\mu}(M^n)^2
    + \frac{c_p^2}{c_{ae}\mu}\norm[0]{f^{s}}_{L^{\infty}(J;L^2(\Omega^s))}^2
    + \frac{c_{td}^2\mu}{2\kappa^2c_{ae}}\norm[0]{f^{d}}_{L^{\infty}(J;L^2(\Omega^d))}^2}.
  \end{equation}  
\end{lemma}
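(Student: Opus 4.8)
The plan is a standard energy estimate. I would take \cref{eq:hdg-fully} at time level $n+1$ and test with $(\boldsymbol{v}_h,\boldsymbol{q}_h)=(\boldsymbol{u}_h^{n+1},-\boldsymbol{p}_h^{n+1})$. Since $b_h$ is linear in its second argument the velocity--pressure couplings cancel, and since the bulk pressure test function is $q_h=-p_h^{n+1}$, whose restriction to $\Omega^d$ is $-p_h^{d,n+1}$, one is left with
\begin{equation*}
  (d_t u_h^{s,n+1}, u_h^{s,n+1})_{\Omega^s} + a_h(u_h^n; \boldsymbol{u}_h^{n+1}, \boldsymbol{u}_h^{n+1}) = (f^{s,n+1}, u_h^{s,n+1})_{\Omega^s} - (f^{d,n+1}, p_h^{d,n+1})_{\Omega^d}.
\end{equation*}
The hypothesis $\boldsymbol{u}_h^{s,k}\in\boldsymbol{B}_h^s$ for $0\le k\le n$ gives, via \cref{eq:dtrpoincareineq}, that $\norm[0]{u_h^n\cdot n}_{\Gamma^I}$ is small enough for \cref{eq:coercivity_awhvhvh}, so that $a_h(u_h^n;\boldsymbol{u}_h^{n+1},\boldsymbol{u}_h^{n+1})\ge c_{ae}\mu\tnorm{\boldsymbol{u}_h^{n+1}}_v^2 = c_{ae}\mu\big(\tnorm{\boldsymbol{u}_h^{n+1}}_{v,s}^2+\tnorm{\boldsymbol{u}_h^{n+1}}_{v,d}^2+\norm[0]{\bar u_h^{n+1,t}}_{\Gamma^I}^2\big)$.

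On the right-hand side I would use Cauchy--Schwarz, \cref{eq:dpoincareineq}, and the bound $\norm[0]{d_tu_h^{s,n+1}}_{\Omega^s}\le M^n$ supplied by \cref{eq:dtuhsnp1bound} of \cref{lem:bounddtuhuh1} (whose hypotheses coincide with those assumed here) to obtain $(d_t u_h^{s,n+1},u_h^{s,n+1})_{\Omega^s}\le c_pM^n\tnorm{\boldsymbol{u}_h^{n+1}}_{v,s}$ and $(f^{s,n+1},u_h^{s,n+1})_{\Omega^s}\le c_p\norm[0]{f^{s,n+1}}_{\Omega^s}\tnorm{\boldsymbol{u}_h^{n+1}}_{v,s}$, and I would use \cref{eq:phdnvsuhdn} together with $\norm[0]{u_h^{d,n+1}}_{\Omega^d}\le\tnorm{\boldsymbol{u}_h^{n+1}}_{v,d}$ to obtain $(f^{d,n+1},p_h^{d,n+1})_{\Omega^d}\le c_{td}\mu\kappa^{-1}\norm[0]{f^{d,n+1}}_{\Omega^d}\tnorm{\boldsymbol{u}_h^{n+1}}_{v,d}$. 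Applying Young's inequality to the three products, with the first two split against $c_{ae}\mu\tnorm{\boldsymbol{u}_h^{n+1}}_{v,s}^2$ (absorbed parts totalling $\tfrac12 c_{ae}\mu\tnorm{\boldsymbol{u}_h^{n+1}}_{v,s}^2$) and the Darcy one split against $c_{ae}\mu\tnorm{\boldsymbol{u}_h^{n+1}}_{v,d}^2$ (absorbed part $\tfrac12 c_{ae}\mu\tnorm{\boldsymbol{u}_h^{n+1}}_{v,d}^2$), the residual data terms come out as exactly $\tfrac{c_p^2}{c_{ae}\mu}(M^n)^2$, $\tfrac{c_p^2}{c_{ae}\mu}\norm[0]{f^{s,n+1}}_{\Omega^s}^2$, and $\tfrac{c_{td}^2\mu}{2\kappa^2 c_{ae}}\norm[0]{f^{d,n+1}}_{\Omega^d}^2$. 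Moving the absorbed terms to the left leaves at least $\tfrac12 c_{ae}\mu\tnorm{\boldsymbol{u}_h^{n+1}}_v^2$; multiplying by $2/(c_{ae}\mu)$ and estimating $\norm[0]{f^{s,n+1}}_{\Omega^s}\le\norm[0]{f^s}_{L^\infty(J;L^2(\Omega^s))}$, $\norm[0]{f^{d,n+1}}_{\Omega^d}\le\norm[0]{f^d}_{L^\infty(J;L^2(\Omega^d))}$ yields \cref{eq:uhnp1vtnormbound}.

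The delicate point is the appeal to \cref{eq:coercivity_awhvhvh}, which is stated for $\boldsymbol{v}_h\in\boldsymbol{Z}_h$: by \cref{rem:properties-hdg-fully} one has $\nabla\cdot u_h^{d,n+1}=-\Pi_Q^d f^{d,n+1}$, so $\boldsymbol{u}_h^{n+1}\notin\boldsymbol{Z}_h$ unless $f^{d,n+1}$ is $L^2$-orthogonal to $Q_h^d$. Nonetheless $\boldsymbol{u}_h^{n+1}$ satisfies all the remaining, facet-level, constraints defining $\boldsymbol{Z}_h$ and is divergence-free on each $K\in\mathcal{T}_h^s$, which is what the proof of \cite[Lemma~6]{Cesmelioglu:2023} actually uses — this is the same step as in the proof of \cref{lem:bounddtuhuh1}, where \cref{eq:coercivity_awhvhvh} is applied to $\delta\boldsymbol{u}_h^{n+1}$; if one prefers, one can instead bound $\tnorm{\boldsymbol{u}_h^{n+1}}_{v,s}^2+\norm[0]{u_h^{d,n+1}}_{\Omega^d}^2+\norm[0]{\bar u_h^{n+1,t}}_{\Gamma^I}^2$ by the argument above and add $\norm[0]{\nabla\cdot u_h^{d,n+1}}_{\Omega^d}=\norm[0]{\Pi_Q^d f^{d,n+1}}_{\Omega^d}\le\norm[0]{f^{d,n+1}}_{\Omega^d}$ to recover the full norm $\tnorm{\boldsymbol{u}_h^{n+1}}_v$. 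The rest is the precise apportioning of the Young weights — in particular balancing the Darcy pressure term against $c_{ae}\mu\tnorm{\boldsymbol{u}_h^{n+1}}_{v,d}^2$ rather than the whole norm — which is routine but must be done exactly for the constants in \cref{eq:uhnp1vtnormbound} to emerge.
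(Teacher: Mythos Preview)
Your proposal is correct and follows essentially the same route as the paper: test \cref{eq:hdg-fully} with $(\boldsymbol{u}_h^{n+1},-\boldsymbol{p}_h^{n+1})$, invoke coercivity \cref{eq:coercivity_awhvhvh} via the hypothesis $\boldsymbol{u}_h^{s,n}\in\boldsymbol{B}_h^s$, move the time-derivative term to the right and bound it by \cref{eq:dtuhsnp1bound}, control the Darcy source term through \cref{eq:phdnvsuhdn}, and split with Young's inequality using exactly the weights $\chi=\tfrac12 c_{ae}\mu$ and $\phi=c_{ae}\mu$ that the paper chooses. Your observation about $\boldsymbol{u}_h^{n+1}\notin\boldsymbol{Z}_h$ when $f^{d,n+1}\ne 0$ is a genuine subtlety that the paper does not flag; your remedy (noting that the coercivity proof only uses divergence-freeness in $\Omega^s$ and the facet constraints) is the right one and matches how the paper tacitly applies \cref{eq:coercivity_awhvhvh} here and earlier in \cref{lem:bounddtuhuh1}.
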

\begin{proof}
  Choose
  $(\boldsymbol{v}_h, \boldsymbol{q}_h) = (\boldsymbol{u}_h^{n+1},
  -\boldsymbol{p}_h^{n+1})$ in \cref{eq:hdg-fully}. Coercivity of
  $a_h$ \cref{eq:coercivity_awhvhvh} (which holds by
  \cref{eq:dtrpoincareineq} and the assumption that
  $\boldsymbol{u}_h^{s,n} \in \boldsymbol{B}_h^s$) then implies:
  \begin{equation}
    \label{eq:choosevqupnp1}
    \del[0]{ d_tu_h^{s,n+1}, u_h^{s,n+1}}_{\Omega^s}
    + c_{ae}\mu \tnorm{\boldsymbol{u}_h^{n+1}}_v^2
    \le (f^{s,n+1}, u_h^{s,n+1})_{\Omega^s} - (f^{d,n+1}, p_h^{d,n+1})_{\Omega^d}.
  \end{equation}
  Using the Cauchy--Schwarz inequality,
  \cref{eq:phdnvsuhdn,eq:dpoincareineq}, Young's inequality and
  \cref{eq:dtuhsnp1bound}, we obtain:
  \begin{equation*}
    \begin{split}
      c_{ae} \mu \tnorm{\boldsymbol{u}_h^{n+1}}_{v}^2
      \le& | (f^{s,n+1}, u_h^{s,n+1})_{\Omega^s} - (f^{d,n+1}, p_h^{d,n+1})_{\Omega^d}
      - \del[0]{ d_tu_h^{s,n+1}, u_h^{s,n+1}}_{\Omega^s}|
      \\
      \le&
      c_p \norm[0]{f^{s,n+1}}_{\Omega^s}\tnorm{\boldsymbol{u}_h^{s,n+1}}_{v,s} + c_{td} \mu \kappa^{-1}\norm[0]{f^{d,n+1}}_{\Omega^d}\norm[0]{u_h^{d,n+1}}_{\Omega^d}
      +c_p\norm[0]{d_tu_h^{s,n+1}}_{\Omega^s}\tnorm{\boldsymbol{u}_h^{s,n+1}}_{v,s}
      \\
      \leq & \frac{c_p^2}{2\chi}(M^n)^2 + \frac{c_p^2}{2\chi}\norm[0]{f^{s,n+1}}_{\Omega^s}^2 + 
      \chi \tnorm{\boldsymbol{u}_h^{s,n+1}}_{v,s}^2
      +\frac{c_{td}^2\mu^2}{2\kappa^2\phi}\norm[0]{f^{d,n+1}}_{\Omega^d}^2 + \frac{\phi}{2}\norm[0]{u_h^{d,n+1}}_{\Omega^d}^2.
    \end{split}
  \end{equation*}  
  The result follows by choosing $\chi = \tfrac{1}{2}c_{ae}\mu$, and
  $\phi = c_{ae}\mu$, and using the definition of $\tnorm{\cdot}_v$.
\end{proof}

We end this section by proving existence and uniqueness for all time
levels under a suitable data assumption.

\begin{lemma}  
  \label{lem:boundednessvelocitypressure}
  Let $M^n$ be defined as in \cref{eq:Mm}. Assume the data satisfy 
  for $1 \le n \le N-1$,
  \begin{multline}
    \label{eq:dataassumption}
    \frac{2}{c_{ae}\mu} \del[2]{
      \frac{c_p^2}{c_{ae}\mu}(M^n)^2
      + \frac{c_p^2}{c_{ae}\mu}\norm[0]{f^{s}}_{L^{\infty}(J;L^2(\Omega^s))}^2
      + \frac{c_{td}^2\mu}{2\kappa^2c_{ae}}\norm[0]{f^{d}}_{L^{\infty}(J;L^2(\Omega^d))}^2}      
    \\
    \le
    \sbr[2]{\min\del[2]{\frac{\mu c_{ae}^s}{2c_{si,2}(c_{pq}^2 + c_{si,4}^2)}, \frac{c_{ae}\mu}{2c_w}}}^2.
  \end{multline}
  Then, starting with $\boldsymbol{u}_h^{s,0} = \boldsymbol{0}$,
  \cref{eq:hdg-fully} has a unique solution. Furthermore, for
  $1 \le n \le N$, the velocity solution is such that
  $\boldsymbol{u}_h^{s,n} \in \boldsymbol{B}_h^s$ and the pressure solution
  satisfies,
  \begin{equation}
    \label{eq:pressurebounduniformnh}
    \tnorm{\boldsymbol{p}_h^{n}}_p^2
    \le
    \del[1]{\tfrac{1}{2}c_{ae}^2
    + c_{ac}^2}\frac{3\mu^2}{c_{bb}^2} \sbr[2]{\min\del[2]{\frac{\mu c_{ae}^s}{2c_{si,2}(c_{pq}^2 + c_{si,4}^2)}, \frac{c_{ae}\mu}{2c_w}}}^2.
  \end{equation}  
\end{lemma}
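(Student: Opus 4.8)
The plan is to run an induction on the time level $n$, using Lemma~\ref{lemap:exisuniqnp1} for the one-step existence/uniqueness and Lemma~\ref{lem:uhskbound} to propagate the smallness condition $\boldsymbol{u}_h^{s,n}\in\boldsymbol{B}_h^s$. The base case $n=0$ is immediate since $\boldsymbol{u}_h^{s,0}=\boldsymbol{0}\in\boldsymbol{B}_h^s$. For the inductive step, suppose $\boldsymbol{u}_h^{s,k}\in\boldsymbol{B}_h^s$ for all $0\le k\le n$. Then Lemma~\ref{lemap:exisuniqnp1} gives a unique solution $(\boldsymbol{u}_h^{n+1},\boldsymbol{p}_h^{n+1})$ at level $n+1$, and Lemma~\ref{lem:uhskbound} gives the bound~\cref{eq:uhnp1vtnormbound} on $\tnorm{\boldsymbol{u}_h^{n+1}}_v^2$. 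The data assumption~\cref{eq:dataassumption} says precisely that the right-hand side of~\cref{eq:uhnp1vtnormbound} is bounded by $[\min(\tfrac{\mu c_{ae}^s}{2c_{si,2}(c_{pq}^2+c_{si,4}^2)},\tfrac{c_{ae}\mu}{2c_w})]^2$; since $\tnorm{\boldsymbol{u}_h^{s,n+1}}_{v,s}\le\tnorm{\boldsymbol{u}_h^{n+1}}_v$ and the defining bound of $\boldsymbol{B}_h^s$ is $\tfrac{1}{2}\mu\min(c_{ae}^sc_{si,2}^{-1}(c_{pq}^2+c_{si,4}^2)^{-1},c_{ae}c_w^{-1})$, this is exactly the statement $\boldsymbol{u}_h^{s,n+1}\in\boldsymbol{B}_h^s$ (one should double-check the precise placement of $\mu$ and the factor $2$, but the data assumption is manifestly tailored so that the two match). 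This closes the induction and yields both existence/uniqueness for all $1\le n\le N$ and membership $\boldsymbol{u}_h^{s,n}\in\boldsymbol{B}_h^s$.

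For the pressure bound~\cref{eq:pressurebounduniformnh}, I would use the inf-sup condition~\cref{eq:infsupbh}: $c_{bb}\tnorm{\boldsymbol{p}_h^n}_p \le \sup_{\boldsymbol{v}_h}b_h(\boldsymbol{v}_h,\boldsymbol{p}_h^n)/\tnorm{\boldsymbol{v}_h}_v$. From~\cref{eq:hdg-fully} (with $\boldsymbol{q}_h=0$), $b_h(\boldsymbol{v}_h,\boldsymbol{p}_h^n) = (f^{s,n},v_h)_{\Omega^s} - (d_tu_h^{s,n},v_h)_{\Omega^s} - a_h(u_h^{n-1};\boldsymbol{u}_h^n,\boldsymbol{v}_h)$. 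Each term is bounded using Cauchy--Schwarz with~\cref{eq:dpoincareineq}, the boundedness~\cref{eq:ahboundedXh} of $a_h$ (valid since $\boldsymbol{u}_h^{s,n-1}\in\boldsymbol{B}_h^s$, with $c_{ac}$ replaced by its uniform upper bound), and the bound~\cref{eq:dtuhsnp1bound} on $\norm{d_tu_h^{s,n}}_{\Omega^s}$, together with the velocity bound just established. Collecting, $\tnorm{\boldsymbol{p}_h^n}_p \le c_{bb}^{-1}\mu(\tfrac{1}{2}c_{ae}^2+c_{ac}^2)^{1/2}\cdot(\text{something})$ where the squared-and-tripled structure of~\cref{eq:pressurebounduniformnh} suggests grouping the three contributions ($f^{s,n}$, $d_tu_h^{s,n}$, $a_h$-term) and bounding each by the common $\min(\cdots)$ expression, picking up the factor $3$ from $(x+y+z)^2\le 3(x^2+y^2+z^2)$.

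The main obstacle is the bookkeeping in the pressure estimate: one must verify that the coefficient $c_{ac}$ appearing in~\cref{eq:ahboundedXh} — which depends on $\norm{w}_{1,h,\Omega^s}=\norm{u_h^{s,n-1}}_{1,h,\Omega^s}$ — is controlled by the value $c_{ac}^2$ in~\cref{eq:pressurebounduniformnh}, using that $\boldsymbol{u}_h^{s,n-1}\in\boldsymbol{B}_h^s$ bounds $\norm{u_h^{s,n-1}}_{1,h,\Omega^s}=\tnorm{\boldsymbol{u}_h^{s,n-1}}_{v,s}$; and that the velocity norm, the time-difference term $\norm{d_tu_h^{s,n}}_{\Omega^s}\le M^{n-1}$, and the source $\norm{f^{s,n}}_{\Omega^s}$ all fit under the single bounding constant $\min(\tfrac{\mu c_{ae}^s}{2c_{si,2}(c_{pq}^2+c_{si,4}^2)},\tfrac{c_{ae}\mu}{2c_w})$ thanks to~\cref{eq:dataassumption}. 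The rest is routine Cauchy--Schwarz and Young's-inequality manipulation of the kind already seen in Lemmas~\ref{lem:bounddtuhuh1} and~\ref{lem:uhskbound}.
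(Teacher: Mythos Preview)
Your proposal is correct and follows essentially the same approach as the paper: the induction on $n$ using Lemma~\ref{lemap:exisuniqnp1} and Lemma~\ref{lem:uhskbound} together with the data assumption~\cref{eq:dataassumption} is exactly what the paper does for existence, uniqueness, and $\boldsymbol{u}_h^{s,n}\in\boldsymbol{B}_h^s$; and the pressure bound via the inf-sup condition~\cref{eq:infsupbh}, the three-term splitting (source, time derivative, $a_h$), the factor $3$ from H\"older's inequality for sums, and absorbing the $f^s$ and $M^n$ contributions into $\tfrac{1}{2}c_{ae}^2\mu^2[\min(\cdots)]^2$ via~\cref{eq:dataassumption} is precisely the paper's argument. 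Your remark about controlling the $w$-dependent constant $c_{ac}$ uniformly through $\boldsymbol{u}_h^{s,n-1}\in\boldsymbol{B}_h^s$ is a point the paper leaves implicit.
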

\begin{proof}
  Existence and uniqueness of
  $(\boldsymbol{u}_h^{n+1},\boldsymbol{p}_h^{n+1})$ under the
  assumption that $\boldsymbol{u}_h^{s,n} \in \boldsymbol{B}_h^s$ for
  $0 \le n \le N$ is established by Lemma
  \ref{lemap:exisuniqnp1}. That
  $\boldsymbol{u}_h^{s,n} \in \boldsymbol{B}_h^s$ for $1 \le n \le N$
  is due to \cref{eq:uhnp1vtnormbound,eq:dataassumption}.

  We now prove the pressure bound \cref{eq:pressurebounduniformnh}. By
  the inf-sup condition \cref{eq:infsupbh} and the HDG method
  \cref{eq:hdg-fully}, with $\boldsymbol{q}_h=\boldsymbol{0}$, we find
  for $0\le n \le N-1$:
  \begin{equation*}
    c_{bb} \tnorm{\boldsymbol{p}_h^{n+1}}_p
    \le
    \sup_{0 \ne \boldsymbol{v}_h \in \boldsymbol{X}_h}
    \frac{|b_h(\boldsymbol{v}_h, \boldsymbol{p}_h^{n+1})|}{\tnorm{\boldsymbol{v}_h}_{v}}
    =
    \sup_{0 \ne \boldsymbol{v}_h \in \boldsymbol{X}_h}
    \frac{|
      (f^{s,n+1}, v_h)_{\Omega^s}
      -\del[0]{ d_t u_h^{n+1}, v_h}_{\Omega^s}
      - a_h(u_h^n; \boldsymbol{u}_h^{n+1}, \boldsymbol{v}_h)|
    }{\tnorm{\boldsymbol{v}_h}_{v}}.      
  \end{equation*}
  By the Cauchy--Schwarz inequality,
  \cref{eq:dpoincareineq,eq:ahboundedXh}, squaring and using
  H\"older's inequality for sums,
  \begin{equation*}
   \tnorm{\boldsymbol{p}_h^{n+1}}_p^2
    \le 3\big(c_p^2c_{bb}^{-2} \norm[0]{f^{s,n+1}}_{\Omega^s}^2
     + c_p^2c_{bb}^{-2} \norm[0]{d_tu_h^{n+1}}_{\Omega^s}^2
      + c_{ac}^2c_{bb}^{-2} \mu^2 \tnorm{\boldsymbol{u}_h^{n+1}}_{v}^2\big).
  \end{equation*}
  A bound for $\tnorm{\boldsymbol{u}_h^{n+1}}_v$ is given by Lemma
  \ref{lem:uhskbound} and the data assumption
  \cref{eq:dataassumption}. Together with \cref{eq:dtuhsnp1bound} we
  obtain
  \begin{equation}
    \label{eq:pnormboundhnp1a}
    \tnorm{\boldsymbol{p}_h^{n+1}}_p^2
    \le  3\big(c_p^2c_{bb}^{-2} \del[1]{\norm[0]{f^s}_{L^{\infty}(J;L^2(\Omega^s))}^2 + (M^n)^2}
    + c_{ac}^2c_{bb}^{-2} \mu^2 \sbr[2]{\min\del[2]{\frac{\mu c_{ae}^s}{2c_{si,2}(c_{pq}^2 + c_{si,4}^2)}, \frac{c_{ae}\mu}{2c_w}}}^2\big).
  \end{equation}
  Note that the data assumption \cref{eq:dataassumption} implies that
  \begin{equation}
    \label{eq:datapassump}
    c_p^2c_{bb}^{-2}\del[2]{
      \norm[0]{f^{s}}_{L^{\infty}(J;L^2(\Omega^s))}^2 + (M^n)^2
      }      
    \le \tfrac{1}{2}c_{ae}^2c_{bb}^{-2}\mu^2
    \sbr[2]{\min\del[2]{\frac{\mu c_{ae}^s}{2c_{si,2}(c_{pq}^2 + c_{si,4}^2)}, \frac{c_{ae}\mu}{2c_w}}}^2.    
  \end{equation}
  The result follows from \cref{eq:pnormboundhnp1a,eq:datapassump}.
\end{proof}

\section{A priori error estimates}
\label{sec:apriori}
Let $\Pi_Q$ be the $L^2$-projection into $Q_h$ and let $\bar{\Pi}_V$
and $\bar{\Pi}_Q^j$, $j=s,d$, be the $L^2$-projections into the facet
spaces $\bar{V}_h$ and $\bar{Q}_h^j$, $j=s,d$, respectively. Let
$\Pi_V: H({\rm div};\Omega) \cap \sbr[0]{L^r(\Omega)}^{\dim}
\rightarrow X_h\cap H({\rm div};\Omega)$, where $r>2$, be an interpolant such that
\begin{align}
  \label{eq:interpolant_a}
  (q_h, \nabla\cdot \Pi_V u)_K 
  &= (q_h, \nabla\cdot u)_K
  && \forall q_h\in P_{k-1}(K),
  \\
  \label{eq:interpolant_b}
  \langle \bar{q}_h, n\cdot \Pi_Vu\rangle_{F}
  &=\langle \bar{q}_h, n\cdot u \rangle_{F}
  && \forall \bar{q}_h\in P_{k}(F),\quad \forall \text{ faces } F \text{ of } K,
\end{align}
and with the properties that for any
$u\in \sbr[0]{H^{k+1}(K)}^{\dim}$,
\begin{equation}
  \label{eq:PiVuumKestimate}
  \norm[0]{u-\Pi_V u}_{m,K} \leq C h_K^{\ell-m}\norm[0]{u}_{\ell,K} 
  \quad m=0,1,2, \quad \max(1,m)\leq \ell\leq k+1,   
\end{equation}
and for any $u \in \sbr[0]{W^1_{\infty}(K)}^{\dim}$,
\begin{equation}
\label{eq:linfnormPiV}
  \norm[0]{u-\Pi_V u}_{L^{\infty}(K)} \leq Ch_K |u|_{W^1_{\infty}(K)}.
\end{equation}
Examples of such operators are the Brezzi--Douglas--Marini (BDM) and Raviart--Thomas (RT)
interpolation operators \cite{Boffi:book}.

We partition the errors into their interpolation and approximation
parts as $\zeta - \zeta_h = e_{\zeta}^{I} - e_{\zeta}^h$ for
$\zeta = u, \bar{u}, p, \bar{p}^j$ and for $j=s,d$, where
\begin{align*}
  e_u^I &= u- \Pi_V u,  &e_u^h &= u_h- \Pi_V u,  &e_p^I &= p- \Pi_Q p,  &e_p^h &= p_h- \Pi_Q p,\\
  \bar{e}_{u}^I &= \gamma(u)-\bar{\Pi}_V u,  &\bar{e}_{u}^h &= \bar{u}_h-\bar{\Pi}_V u,
  &\bar{e}_{p^j}^I &= \gamma(p^j)-\bar{\Pi}_Q^j p, &\bar{e}_{p^j}^h &= \bar{p}^j_h-\bar{\Pi}_Q^j p.
\end{align*}
To be consistent with the notation for elements in
$\boldsymbol{X}_h, \boldsymbol{Q}_h, \boldsymbol{Q}_h^j$, $j=s,d$, we
also define
\begin{equation*}
  \boldsymbol{e}_u^{\zeta} = (e_u^{\zeta}, \bar{e}_u^{\zeta}), \quad
  \boldsymbol{e}_p^{\zeta} = (e_p^{\zeta}, \bar{e}_{p^s}^{\zeta}, \bar{e}_{p^d}^{\zeta}),
  \quad \boldsymbol{e}_{p^j}^{\zeta} = (e_{p^j}^{\zeta}, \bar{e}_{p^j}^{\zeta}),
  \quad \zeta = I,h.
\end{equation*}
In the following we will use that the initial condition is given by
$u_h^{s,0} = \Pi_V u_0$ and so $e_u^{h,0} = 0$.

To determine the error equation we first note that by Lemma
\ref{lem:consistency}, the exact solution
$(\boldsymbol{u},\boldsymbol{p})$ satisfies
\cref{eq:hdg-semi}. Therefore, subtracting \cref{eq:hdg-semi} at time
level $t=t^{n+1}$, with $(\boldsymbol{u}_h,\boldsymbol{p}_h)$ replaced
by $(\boldsymbol{u},\boldsymbol{p})$, from \cref{eq:hdg-fully},
splitting the errors into their interpolation and approximation parts,
using that $b_h(\boldsymbol{v}_h, \boldsymbol{e}_p^{I,n+1})=0$ for all
$\boldsymbol{v}_h \in \boldsymbol{X}_h$ (since $\Pi_Q$, $\bar{\Pi}_Q$
are $L^2$-projections onto $Q_h$ and $\bar{Q}_h$, respectively, and
$\nabla \cdot V_h=Q_h$) and that
$b_h(\boldsymbol{e}_u^{I,n+1}, \boldsymbol{q}_h)=0$ for all
$\boldsymbol{q}_h \in \boldsymbol{Q}_h$ (by
\cref{eq:interpolant_a,eq:interpolant_b} and properties of the
$L^2$-projection $\bar{\Pi}_V$) we obtain:
\begin{multline}
  \label{eq:erreqn-2}
  (d_t e_u^{h,n+1}, v_h)_{\Omega^s}
  + t_h(u_h^n; \boldsymbol{u}_h^{n+1}, \boldsymbol{v}_h) - t_h(u^{n+1}; \boldsymbol{u}^{n+1}, \boldsymbol{v}_h)
  + a_h^L(\boldsymbol{e}_u^{h,n+1}, \boldsymbol{v}_h)
  + b_h(\boldsymbol{v}_h, \boldsymbol{e}_p^{h,n+1})
  + b_h(\boldsymbol{e}_u^{h,n+1}, \boldsymbol{q}_h)
  \\
  = \del[0]{ d_t e_u^{I,n+1}, v_h}_{\Omega^s} + \del[0]{ \partial_t u^{n+1}-d_t  u^{n+1}, v_h}_{\Omega^s}
  + a_h^L(\boldsymbol{e}_u^{I,n+1}, \boldsymbol{v}_h).
\end{multline}
The following theorem now determines an upper bound for the
approximation error $\boldsymbol{e}_u^{h,n}$.

\begin{theorem}
  \label{thm:velocityestimate}
  Suppose that $u\in L^{\infty}(0,T;\sbr[0]{H^{k+1}(\Omega)}^{\dim})$
  such that $u^s\in L^2(0,T;\sbr[0]{W^1_3(\Omega^s)}^{\dim})$, \\
  $\partial_t u\in L^2(0,T;\sbr[0]{H^{k}(\Omega^s)}^{\dim})$, and
  $\partial_{tt}u\in L^2(0,T;\sbr[0]{L^2(\Omega^s)}^{\dim})$. Suppose
  also that the data satisfies the assumptions of Lemma
  \ref{lem:boundednessvelocitypressure}. Then, for $1 \le m \le N$,
  \begin{equation}
    \label{eq:velocityerrorestimate}
    \begin{split}
      &\norm[0]{e_u^{h,m}}_{\Omega^s}^2
      + \Delta t^2 \sum_{n=0}^{m-1}\norm[0]{d_te_u^{h,n+1}}_{\Omega^s}^2
      + c_{ae}\mu\Delta t\sum_{n=0}^{m-1}\tnorm{\boldsymbol{e}_u^{h,n+1}}_v^2
      \\
      \le& CC_G\big[
      h^{2k} \cbr[1]{ \mu^{-1}\norm[0]{\partial_t u}^2_{L^2(J;H^k(\Omega^s))}
        +  T\del[1]{(\mu 
          + \mu^{-1}) \norm[0]{u}_{L^{\infty}(J;H^{k+1}(\Omega^s))}^2 } \norm[0]{u}_{L^{\infty}(J;H^{k+1}(\Omega))}^2 }
      \\
      & \hspace{2em} + (\Delta t)^2 \mu^{-1} \cbr[1]{ \norm[0]{\partial_{tt}u}_{L^2(J;L^2(\Omega^s))}^2
        + \norm[0]{\partial_tu}^2_{L^2(J;L^2(\Omega^s))} \norm[0]{u}_{L^{\infty}(J;H^1(\Omega^s))}^2 } \big],
    \end{split}
  \end{equation}    
  where $C_G = \exp(\Delta t \sum_{n=0}^{m-1} C\mu^{-1}
  \norm[0]{u^{n+1}}^2_{W^1_3(\Omega^s)})$. 
\end{theorem}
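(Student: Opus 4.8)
The plan is to run a discrete energy argument on the error equation \cref{eq:erreqn-2}, tested with $(\boldsymbol{v}_h,\boldsymbol{q}_h)=(\boldsymbol{e}_u^{h,n+1},-\boldsymbol{e}_p^{h,n+1})$. Taking $\boldsymbol{v}_h=0$ in \cref{eq:erreqn-2} shows $b_h(\boldsymbol{e}_u^{h,n+1},\boldsymbol{q}_h)=0$ for all $\boldsymbol{q}_h\in\boldsymbol{Q}_h$, so $\boldsymbol{e}_u^{h,n+1}\in\boldsymbol{Z}_h$ and, since $\nabla\cdot e_u^{h,n+1}\in P_{k-1}$ on each element, $e_u^{h,n+1}$ is pointwise divergence-free on $\Omega^s$ (this also uses that $\boldsymbol{\Pi}_Vu^{n+1}$ reproduces the divergence constraint through \cref{eq:interpolant_a,eq:interpolant_b}). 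With this choice of test functions the two $b_h$-terms cancel and no pressure quantity survives on either side — this is exactly what makes the estimate pressure-robust. For the convective and linear contributions I would add and subtract $t_h(u_h^n;\boldsymbol{e}_u^{h,n+1},\boldsymbol{v}_h)$ so as to assemble $a_h(u_h^n;\boldsymbol{e}_u^{h,n+1},\boldsymbol{e}_u^{h,n+1})$, which is coercive by \cref{eq:coercivity_awhvhvh}: indeed Lemma~\ref{lem:boundednessvelocitypressure} guarantees $\boldsymbol{u}_h^{s,n}\in\boldsymbol{B}_h^s$, so $u_h^n$ is divergence-free (Remark~\ref{rem:properties-hdg-fully}) with $\norm{u_h^n\cdot n}_{\Gamma^I}$ small via \cref{eq:dtrpoincareineq}. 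Using linearity of $t_h$ in its second slot and $\boldsymbol{\Pi}_Vu^{n+1}=\boldsymbol{u}^{n+1}-\boldsymbol{e}_u^{I,n+1}$, the leftover convective terms reduce to $t_h(u_h^n;\boldsymbol{u}^{n+1},\boldsymbol{v}_h)-t_h(u^{n+1};\boldsymbol{u}^{n+1},\boldsymbol{v}_h)$ and $-t_h(u_h^n;\boldsymbol{e}_u^{I,n+1},\boldsymbol{v}_h)$, both of which I move to the right-hand side.

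For the time-derivative term I would use $(d_te_u^{h,n+1},e_u^{h,n+1})_{\Omega^s}=\tfrac{1}{2\Delta t}(\norm{e_u^{h,n+1}}_{\Omega^s}^2-\norm{e_u^{h,n}}_{\Omega^s}^2)+\tfrac{\Delta t}{2}\norm{d_te_u^{h,n+1}}_{\Omega^s}^2$. On the right-hand side the ``easy'' terms are handled as follows: $(d_te_u^{I,n+1},e_u^{h,n+1})_{\Omega^s}$ by writing $d_te_u^{I,n+1}=\Delta t^{-1}\int_{t^n}^{t^{n+1}}(\partial_tu-\Pi_V\partial_tu)\,\mathrm{d}s$ (the interpolant commutes with $\partial_t$), \cref{eq:PiVuumKestimate}, Cauchy--Schwarz in time, and \cref{eq:dpoincareineq}; the backward-Euler truncation $(\partial_tu^{n+1}-d_tu^{n+1},e_u^{h,n+1})_{\Omega^s}$ by the integral Taylor remainder $\partial_tu^{n+1}-d_tu^{n+1}=\Delta t^{-1}\int_{t^n}^{t^{n+1}}(s-t^n)\partial_{tt}u(s)\,\mathrm{d}s$; and $a_h^L(\boldsymbol{e}_u^{I,n+1},\boldsymbol{e}_u^{h,n+1})$ by \cref{eq:ahLbddness}, \cref{eq:equivalencetnorm} and \cref{eq:PiVuumKestimate}. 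Each is then split by Young's inequality, the $\tnorm{\boldsymbol{e}_u^{h,n+1}}_v$-factor absorbed into $c_{ae}\mu\tnorm{\boldsymbol{e}_u^{h,n+1}}_v^2$ and the remaining factor — always carrying the pressure-robust power $\mu^{-1}$ (or $\mu$) — kept on the right; these generate the $h^{2k}\mu^{-1}\norm{\partial_tu}_{L^2(J;H^k(\Omega^s))}^2$, the $(\Delta t)^2\mu^{-1}\norm{\partial_{tt}u}_{L^2(J;L^2(\Omega^s))}^2$, and part of the $h^{2k}$-interpolation contributions.

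The delicate part is the two convective remainders. For $t_h(u_h^n;\boldsymbol{u}^{n+1},\boldsymbol{e}_u^{h,n+1})-t_h(u^{n+1};\boldsymbol{u}^{n+1},\boldsymbol{e}_u^{h,n+1})$ I would invoke \cref{eq:boundedness_th}, applicable since $u_h^n$ and $u^{n+1}$ are both divergence-free on $\Omega^s$, and then split $\norm{u_h^n-u^{n+1}}_{1,h,\Omega^s}\le\norm{e_u^{h,n}}_{1,h,\Omega^s}+\norm{e_u^{I,n}}_{1,h,\Omega^s}+\norm{\delta u^{n+1}}_{1,h,\Omega^s}$, i.e.\ into the approximation error (which drives the Gronwall argument), the interpolation error (handled by \cref{eq:PiVuumKestimate}), and the time increment $\delta u^{n+1}$, which is $O(\Delta t^{1/2})$ in the relevant norm and thus feeds the $(\Delta t)^2$-terms; the factor $\tnorm{\boldsymbol{u}^{n+1}}_{v,s}\lesssim\norm{u^{n+1}}_{H^1(\Omega^s)}$ is bounded by $\norm{u^{n+1}}_{W^1_3(\Omega^s)}$. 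The term $-t_h(u_h^n;\boldsymbol{e}_u^{I,n+1},\boldsymbol{e}_u^{h,n+1})$ is expanded from the definition \cref{eq:oh} and estimated termwise by H\"older, using $\norm{u^{n+1}}_{W^1_3(\Omega^s)}$ to control the factor multiplying the interpolation error — this is what forces the hypothesis $u^s\in L^2(0,T;[W^1_3(\Omega^s)]^{\dim})$. After Young's inequality, the only right-hand term proportional to $\tnorm{\boldsymbol{e}_u^{h,n}}_v^2$ has coefficient $\lesssim\mu^{-1}\norm{u^{n+1}}_{W^1_3(\Omega^s)}^2$, and everything else is data/interpolation/truncation.

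Finally I would multiply through by $\Delta t$, sum over $n=0,\dots,m-1$, telescope the $\norm{e_u^{h,\cdot}}_{\Omega^s}^2$ differences, use $e_u^{h,0}=0$, bound the interval sums of interpolation and truncation quantities by their global norms over $J$ (with $m\Delta t\le T$), and apply the discrete Gronwall inequality; the Gronwall coefficient $\Delta t\,C\mu^{-1}\norm{u^{n+1}}_{W^1_3(\Omega^s)}^2$ produces the factor $C_G=\exp\!\big(\Delta t\sum_{n=0}^{m-1}C\mu^{-1}\norm{u^{n+1}}_{W^1_3(\Omega^s)}^2\big)$ and yields \cref{eq:velocityerrorestimate}. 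The main obstacle is precisely the bookkeeping in the last two steps: keeping every estimate pressure-robust (tracking the correct powers of $\mu$), ensuring the Gronwall coefficient is multiplied by $\Delta t$ and carries only the $W^1_3$-norm rather than a higher one, and verifying throughout that the divergence-freeness hypotheses required by \cref{eq:boundedness_th,eq:coercivity_awhvhvh} hold for $u_h^n$, $u^{n+1}$, $\boldsymbol{\Pi}_Vu^{n+1}$ and $\boldsymbol{e}_u^{h,n+1}$.
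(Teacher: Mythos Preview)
Your overall architecture is right and matches the paper: test \cref{eq:erreqn-2} with $(\boldsymbol{e}_u^{h,n+1},-\boldsymbol{e}_p^{h,n+1})$, use coercivity \cref{eq:coercivity_awhvhvh} (legitimate because Lemma~\ref{lem:boundednessvelocitypressure} puts $\boldsymbol{u}_h^{s,n}\in\boldsymbol{B}_h^s$), handle $I_1$, $I_2$ and the $a_h^L$-term exactly as you describe, then sum and apply discrete Gr\"onwall.

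The gap is in the convective remainder. You bound
\[
t_h(u_h^n;\boldsymbol{u}^{n+1},\boldsymbol{e}_u^{h,n+1})-t_h(u^{n+1};\boldsymbol{u}^{n+1},\boldsymbol{e}_u^{h,n+1})
\]
via \cref{eq:boundedness_th}, which produces a factor $\norm{u_h^n-u^{n+1}}_{1,h,\Omega^s}$ and hence, after splitting and Young, a right-hand term $\tfrac{C}{\mu}\norm{u^{n+1}}_{W^1_3}^2\,\tnorm{\boldsymbol{e}_u^{h,n}}_{v,s}^2$. But the telescoping quantity on the left is $\norm{e_u^{h,\cdot}}_{\Omega^s}^2$, while the only control you have on $\tnorm{\boldsymbol{e}_u^{h,n}}_v^2$ is through the \emph{sum} $c_{ae}\mu\,\Delta t\sum_n\tnorm{\boldsymbol{e}_u^{h,n+1}}_v^2$. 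After multiplying by $\Delta t$ and summing, absorbing $\Delta t\,\tfrac{C}{\mu}\norm{u^{n+1}}_{W^1_3}^2\tnorm{\boldsymbol{e}_u^{h,n}}_v^2$ into $c_{ae}\mu\,\Delta t\,\tnorm{\boldsymbol{e}_u^{h,n}}_v^2$ requires $C\mu^{-1}\norm{u^{n+1}}_{W^1_3}^2\le c_{ae}\mu$, i.e.\ a smallness assumption on the \emph{exact} solution that the theorem does not make. A Gr\"onwall argument on the energy norm fails for the same reason: the accumulated coefficient is $\sum_n C\mu^{-2}\norm{u^{n+1}}_{W^1_3}^2$ with no $\Delta t$ in front, which blows up as $\Delta t\to 0$.

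The paper avoids this by decomposing differently (its $I_4$ has $\boldsymbol{\Pi}_Vu^{n+1}$, not $\boldsymbol{u}^{n+1}$, in the second slot) and then, for the dangerous piece $t_h(e_u^{h,n};\boldsymbol{u}^{n+1},\boldsymbol{e}_u^{h,n+1})$, \emph{not} using \cref{eq:boundedness_th}. Instead it exploits that $u^{n+1}$ is single-valued on facets to collapse the boundary terms in \cref{eq:oh}, integrates the volume term by parts using $\nabla\cdot e_u^{h,n}=0$, and arrives at $(e_u^{h,n}\cdot\nabla u^{n+1},e_u^{h,n+1})_{\Omega^s}$, which is estimated by the $L^2\times L^3\times L^6$ H\"older split and the discrete Sobolev embedding $\norm{e_u^{h,n+1}}_{L^6}\lesssim\tnorm{\boldsymbol{e}_u^{h,n+1}}_{v,s}$. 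This yields the Gr\"onwall term $\tfrac{C}{\mu}\norm{u^{n+1}}_{W^1_3}^2\,\norm{e_u^{h,n}}_{\Omega^s}^2$ in the \emph{$L^2$ norm}, matching the telescoped quantity and giving the stated $C_G$ without any restriction on $\Delta t$ or on $\norm{u}_{W^1_3}$. The companion term $t_h(u_h^n;\boldsymbol{e}_u^{I,n+1},\boldsymbol{e}_u^{h,n+1})-t_h(\Pi_Vu^n;\boldsymbol{e}_u^{I,n+1},\boldsymbol{e}_u^{h,n+1})$ needs a similarly careful facet-by-facet H\"older analysis (this is the long Appendix~\ref{ap:I4}); your proposed ``expand \cref{eq:oh} and estimate termwise by H\"older'' is the right idea, but you must again land on $\norm{e_u^{h,n}}_{\Omega^s}$ rather than $\tnorm{\boldsymbol{e}_u^{h,n}}_{v,s}$ for Gr\"onwall to close.
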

\begin{proof}
  Consider the convective terms in \cref{eq:erreqn-2}. We note that
  \begin{equation*}
    \begin{split}
      t_h(u_h^n; \boldsymbol{u}_h^{n+1}, \boldsymbol{v}_h) - t_h(u^{n+1}; \boldsymbol{u}^{n+1}, \boldsymbol{v}_h)
      =& t_h(u_h^n; \boldsymbol{e}_u^{h,n+1}, \boldsymbol{v}_h) - t_h(u^{n+1}; \boldsymbol{e}_u^{I,n+1}, \boldsymbol{v}_h)
      \\
      &+ [t_h(u_h^n; \boldsymbol{\Pi}_V u^{n+1}, \boldsymbol{v}_h) - t_h(u^n; \boldsymbol{\Pi}_V u^{n+1}, \boldsymbol{v}_h)]
      \\
      &+ [t_h(u^n; \boldsymbol{\Pi}_V u^{n+1}, \boldsymbol{v}_h)-t_h(u^{n+1}; \boldsymbol{\Pi}_V u^{n+1}, \boldsymbol{v}_h)].
    \end{split}
  \end{equation*}
  We furthermore note that
  \begin{equation*}
    \begin{split}
      a_h(u_h^n; \boldsymbol{e}_u^{h,n+1}, \boldsymbol{v}_h)
      &= t_h(u_h^n; \boldsymbol{e}_u^{h,n+1}, \boldsymbol{v}_h) + a_h^L(\boldsymbol{e}_u^{h,n+1}, \boldsymbol{v}_h),
      \\
      a_h(u^{n+1}; \boldsymbol{e}_u^{I,n+1}, \boldsymbol{v}_h)
      &= t_h(u^{n+1}; \boldsymbol{e}_u^{I,n+1}, \boldsymbol{v}_h) + a_h^L(\boldsymbol{e}_u^{I,n+1}, \boldsymbol{v}_h),
    \end{split}
  \end{equation*}
  so that we can write \cref{eq:erreqn-2} as
  \begin{equation}
    \label{eq:erreqn-2news}
    \begin{split}
      (d_t e_u^{h,n+1}&, v_h)_{\Omega^s}
      + a_h(u_h^n; \boldsymbol{e}_u^{h,n+1}, \boldsymbol{v}_h)
      + b_h(\boldsymbol{v}_h, \boldsymbol{e}_p^{h,n+1})
      + b_h(\boldsymbol{e}_u^{h,n+1}, \boldsymbol{q}_h)
      \\
      =& \del[0]{ d_t e_u^{I,n+1}, v_h}_{\Omega^s} + \del[0]{ \partial_t u^{n+1}-d_t  u^{n+1}, v_h}_{\Omega^s}
      + a_h(u^{n+1}; \boldsymbol{e}_u^{I,n+1}, \boldsymbol{v}_h)
      \\
      &+ [t_h(u^n; \boldsymbol{\Pi}_V u^{n+1}, \boldsymbol{v}_h) - t_h(u_h^n; \boldsymbol{\Pi}_V u^{n+1}, \boldsymbol{v}_h)]
      + [t_h(u^{n+1}; \boldsymbol{\Pi}_V u^{n+1}, \boldsymbol{v}_h) - t_h(u^n; \boldsymbol{\Pi}_V u^{n+1}, \boldsymbol{v}_h)].
    \end{split}
  \end{equation}
  Let us now choose
  $(\boldsymbol{v}_h, \boldsymbol{q}_h)=(\boldsymbol{e}_u^{h,n+1},
  -\boldsymbol{e}_p^{h,n+1})$ in \cref{eq:erreqn-2news}. By the
  assumption on the data we have coercivity of $a_h$
  \cref{eq:coercivity_awhvhvh} so that:
  \begin{equation}
    \label{eq:dterroreq}
    \begin{split}
      (d_t e_u^{h,n+1}, e_u^{h,n+1})_{\Omega^s}
      + c_{ae}\mu\tnorm{\boldsymbol{e}_u^{h,n+1}}_v^2      
      \le& \del[0]{ d_t e_u^{I,n+1}, e_u^{h,n+1}}_{\Omega^s} + \del[0]{ \partial_t u^{n+1}-d_t  u^{n+1}, e_u^{h,n+1}}_{\Omega^s}
      \\
      & + a_h(u^{n+1}; \boldsymbol{e}_u^{I,n+1}, \boldsymbol{e}_u^{h,n+1})
      \\
      &+ [t_h(u^n; \boldsymbol{\Pi}_V u^{n+1}, \boldsymbol{e}_u^{h,n+1}) - t_h(u_h^n; \boldsymbol{\Pi}_V u^{n+1}, \boldsymbol{e}_u^{h,n+1})]
      \\
      &+ [t_h(u^{n+1}; \boldsymbol{\Pi}_V u^{n+1}, \boldsymbol{e}_u^{h,n+1}) - t_h(u^n; \boldsymbol{\Pi}_V u^{n+1}, \boldsymbol{e}_u^{h,n+1})]
      \\      
      =&:\sum_{j=1}^{5} I_j.
    \end{split}
  \end{equation}
  Using \cref{eq:dtfnp1estimate}, \cref{eq:PiVuumKestimate},
  \cref{eq:dpoincareineq}, and Young's inequality we find:
  \begin{equation}
    \label{eq:I1}
    \begin{split}
      I_1 &\leq \norm[0]{d_t e_u^{I,n+1}}_{\Omega^s}\norm[0]{e_u^{h,n+1}}_{\Omega^s}
      \leq Ch^k (\Delta t)^{-1/2}\norm[0]{\partial_t u}_{L^2(t^n,t^{n+1};H^k(\Omega^s))}\tnorm{\boldsymbol{e}_u^{h,n+1}}_{v,s}
      \\
      & \leq \gamma \tnorm{\boldsymbol{e}_u^{h,n+1}}_{v}^2
      + \dfrac{C}{\gamma}h^{2k} (\Delta t)^{-1}\norm[0]{\partial_t u}^2_{L^2(t^n,t^{n+1};H^k(\Omega^s))},
    \end{split}
  \end{equation}
  where $\gamma > 0$ will be chosen later. By \cref{eq:pdtgddtg},
  \cref{eq:dpoincareineq}, and Young's inequality,
  \begin{equation}
    \label{eq:I2}
    \begin{split}
      I_2 &\leq \norm[0]{\partial_t u^{n+1}-d_t u^{n+1}}_{\Omega^s} \norm[0]{e_u^{h,n+1}}_{\Omega^s} 
      \\
      & \leq C (\Delta t)^{1/2}\norm[0]{\partial_{tt}u}_{L^2(t^n,t^{n+1};L^2(\Omega^s))}\tnorm{\boldsymbol{e}_u^{h,n+1}}_{v,s}
      \\
      & \leq \gamma \tnorm{\boldsymbol{e}_u^{h,n+1}}_{v}^2
      + \frac{C}{\gamma}\Delta t\norm[0]{\partial_{tt}u}_{L^2(t^n,t^{n+1};L^2(\Omega^s))}^2.
    \end{split}
  \end{equation}
  Observe that by \cref{eq:ahboundedX}, \cref{eq:equivalencetnorm},
  \cite[Lemma 7]{Cesmelioglu:2023}, and Young's inequality,
  \begin{equation*}
    \begin{split}
      I_3
      & \leq c_{ac}\mu \tnorm{\boldsymbol{e}_u^{I,n+1}}_{v'}\tnorm{\boldsymbol{e}_u^{h,n+1}}_{v'}
      \le C\mu h^k\norm[0]{u^{n+1}}_{k+1,\Omega}\tnorm{\boldsymbol{e}_u^{h,n+1}}_{v}
      \\
      &\le \gamma \tnorm{\boldsymbol{e}_u^{h,n+1}}_{v}^2 + \frac{C}{\gamma}\mu^2 h^{2k}\norm[0]{u^{n+1}}_{k+1,\Omega}^2. 
    \end{split}
  \end{equation*}
  For $I_4$ we have
  \begin{equation}
  \label{eq:I4}
    I_4
    \le
    2\gamma \tnorm{\boldsymbol{e}_u^{h,n+1}}_{v}^2
    + \frac{C}{\gamma}h^{2k}\norm[0]{u^{n+1}}_{k+1,\Omega^s}^2\norm[0]{u^{n}}_{k+1,\Omega^s}^2
    + \frac{C}{\gamma}\norm[0]{e_u^{h,n}}_{\Omega^s}^2\norm[0]{u^{n+1}}_{W_3^1(\Omega^s)}^2,
  \end{equation}
  the proof of which, due to its length, is given in \cref{ap:I4}.
  
  By \cref{eq:boundedness_th}, \cref{eq:dtfnp1estimate}, properties of
  $\Pi_V$ and $\bar{\Pi}_V$ so that
  $\tnorm{\boldsymbol{\Pi}_V u^{n+1}}_{v,s} \le c
  \norm[0]{u^{n+1}}_{1,\Omega^s}$ (see
  \cite[Eq. (28)]{Rhebergen:2020}) and Young's inequality,
  \begin{equation*}
    \begin{split}
      I_{5} 
      &\le c_w \norm[0]{\nabla(u^{n+1}-u^n)}_{\Omega^s} \tnorm{\boldsymbol{\Pi}_V u^{n+1}}_{v,s} \tnorm{\boldsymbol{e}_u^{h,n+1}}_{v,s}
      \\
      &\le C (\Delta t)^{1/2} \norm[0]{\partial_t u}_{L^2(t^n,t^{n+1};H^1(\Omega^s))}
      \norm[0]{u^{n+1}}_{1,\Omega^s}\tnorm{\boldsymbol{e}_u^{h,n+1}}_{v,s}
      \\
      & \le \gamma \tnorm{\boldsymbol{e}_u^{h,n+1}}_{v}^2
      + \frac{C}{\gamma}\Delta t \norm[0]{\partial_t u}^2_{L^2(t^n,t^{n+1};L^2(\Omega^s))} \norm[0]{u^{n+1}}_{1,\Omega^s}^2.
    \end{split}
  \end{equation*}
  Collecting the above estimates for $I_1,\hdots,I_5$, combining with \cref{eq:dterroreq},
  using that $a(a-b) = \tfrac{1}{2}(a^2-b^2+(a-b)^2)$, choosing
  $\gamma = \tfrac{1}{12}c_{ae}\mu$, and multiplying by $2\Delta t$: 
  \begin{equation*}
    \begin{split}
      &\big(\norm[0]{e_u^{h,n+1}}_{\Omega^s}^2 - \norm[0]{e_u^{h,n}}_{\Omega^s}^2\big) + \norm[0]{e_u^{h,n+1}-e_u^{h,n}}_{\Omega^s}^2
      + c_{ae}\mu\Delta t\tnorm{\boldsymbol{e}_u^{h,n+1}}_v^2
      \\
      \le
      &
      C\big[h^{2k}\big\{\mu^{-1}\norm[0]{\partial_t u}^2_{L^2(t^n,t^{n+1};H^k(\Omega^s))}
      + \Delta t\mu\norm[0]{u^{n+1}}_{k+1,\Omega}^2
      + \Delta t\mu^{-1}\norm[0]{u^{n}}_{k+1,\Omega^s}^2\norm[0]{u^{n+1}}_{k+1,\Omega^s}^2 \big\}
      \\
      &\quad + (\Delta t)^2\mu^{-1}\big\{
      \norm[0]{\partial_{tt}u}_{L^2(t^n,t^{n+1};L^2(\Omega^s))}^2
      + \norm[0]{\partial_t u}^2_{L^2(t^n,t^{n+1};L^2(\Omega^s))} \norm[0]{u^{n+1}}_{1,\Omega^s}^2
      \big\}
      \\
      &\quad + \Delta t\mu^{-1}\norm[0]{u^{n+1}}_{W_3^1(\Omega^s)}^2\norm[0]{e_u^{h,n}}_{\Omega^s}^2
      \big].
    \end{split}
  \end{equation*}
  Summing from $n=0$ to $n=m-1$ and noting that $e_u^{h,0}=0$ gives
  \begin{equation*}
    \begin{split}
      &\norm[0]{e_u^{h,m}}_{\Omega^s}^2
      + \Delta t^2 \sum_{n=0}^{m-1}\norm[0]{d_te_u^{h,n+1}}_{\Omega^s}^2
      + c_{ae}\mu\Delta t\sum_{n=0}^{m-1}\tnorm{\boldsymbol{e}_u^{h,n+1}}_v^2
      \\
      \le &C
      h^{2k} \cbr[1]{ \mu^{-1} \norm[0]{\partial_t u}^2_{L^2(J;H^k(\Omega^s))}
        +  T\del[1]{(\mu 
          + \mu^{-1}) \norm[0]{u}_{L^{\infty}(J;H^{k+1}(\Omega^s))}^2 } \norm[0]{u}_{L^{\infty}(J;H^{k+1}(\Omega))}^2 }
      \\
      & + C\Delta t^2\mu^{-1} \cbr[1]{\norm[0]{\partial_{tt}u}_{L^2(J;L^2(\Omega^s))}^2
        + \norm[0]{\partial_tu}^2_{L^2(J;L^2(\Omega^s))} \norm[0]{u}_{L^{\infty}(J,H^1(\Omega^s))}^2 }
      \\
      & + C \Delta t \sum_{n=0}^{m-1} \mu^{-1} \norm[0]{u^{n+1}}^2_{W^1_3(\Omega^s)} \norm[0]{e_u^{h,n}}_{\Omega^s}^2.
    \end{split}
  \end{equation*}
  The result now follows by Gr\"{o}nwall's inequality \cite[Lemma
  28]{Layton:book} for all $\Delta t > 0$. 
\end{proof}

By a triangle inequality and properties of the interpolant $\Pi_V$ and
projection $\bar{\Pi}_V$, we obtain the following velocity error
estimate that is independent of the pressure.

\begin{corollary}
  \label{cor:velocityestimate}
  Suppose that $u$, $\boldsymbol{u}_h$, and the data satisfy the
  assumptions of Theorem \ref{thm:velocityestimate}. Then, for
  $1 \le m \le N$,
  \begin{equation*}
    \begin{split}
      \| u^m &- u_h^m \|_{\Omega^s}^2
      + c_{ae}\mu\Delta t \sum_{n=0}^{m-1}\tnorm{\boldsymbol{u}^{n+1} - \boldsymbol{u}_h^{n+1}}_v^2
      \\
      \le& C\big[
      h^{2k} \big\{ \mu^{-1}\norm[0]{\partial_t u}^2_{L^2(J;H^k(\Omega^s))}
      +  \del[1]{1+T(\mu + \mu^{-1}) \norm[0]{u}_{L^{\infty}(J;H^{k+1}(\Omega^s))}^2 } \norm[0]{u}_{L^{\infty}(J;H^{k+1}(\Omega))}^2 \big\}
      \\
      & \hspace{2em} + (\Delta t)^2 \mu^{-1} \cbr[1]{\norm[0]{\partial_{tt}u}_{L^2(J;L^2(\Omega^s))}^2
        + \norm[0]{\partial_tu}^2_{L^2(J;L^2(\Omega^s))} \norm[0]{u}_{L^{\infty}(J,H^1(\Omega^s))}^2 } \big].
    \end{split}
  \end{equation*}
\end{corollary}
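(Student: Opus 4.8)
The plan is to pass from the bound on the approximation error $\boldsymbol{e}_u^{h}$ supplied by Theorem \ref{thm:velocityestimate} to a bound on the total error $\boldsymbol{u}-\boldsymbol{u}_h$ by inserting the interpolant and using the triangle inequality. Writing $u^m-u_h^m = e_u^{I,m}-e_u^{h,m}$ and $\boldsymbol{u}^{n+1}-\boldsymbol{u}_h^{n+1} = \boldsymbol{e}_u^{I,n+1}-\boldsymbol{e}_u^{h,n+1}$, one has
\[
\norm[0]{u^m-u_h^m}_{\Omega^s}^2 \le 2\norm[0]{e_u^{I,m}}_{\Omega^s}^2 + 2\norm[0]{e_u^{h,m}}_{\Omega^s}^2,
\]
and, similarly, $c_{ae}\mu\Delta t\sum_{n=0}^{m-1}\tnorm{\boldsymbol{u}^{n+1}-\boldsymbol{u}_h^{n+1}}_v^2 \le 2c_{ae}\mu\Delta t\sum_{n=0}^{m-1}\tnorm{\boldsymbol{e}_u^{I,n+1}}_v^2 + 2c_{ae}\mu\Delta t\sum_{n=0}^{m-1}\tnorm{\boldsymbol{e}_u^{h,n+1}}_v^2$. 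The approximation-error contributions on the right-hand sides are controlled directly by Theorem \ref{thm:velocityestimate}, after discarding the nonnegative term $\Delta t^2\sum_{n}\norm[0]{d_te_u^{h,n+1}}_{\Omega^s}^2$ from its left-hand side; in doing so we absorb the Gr\"onwall factor $C_G$ into the generic constant, using the hypothesis $u^s\in L^2(0,T;\sbr[0]{W^1_3(\Omega^s)}^{\dim})$ to see that $\Delta t\sum_{n=0}^{m-1}\norm[0]{u^{n+1}}_{W^1_3(\Omega^s)}^2$, and hence the exponent defining $C_G$, is bounded independently of $h$ and $\Delta t$.

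It then remains to bound the interpolation-error contributions. For the $L^2$ term, taking Sobolev index $0$ and $\ell=k+1$ in \cref{eq:PiVuumKestimate} gives $\norm[0]{e_u^{I,m}}_{\Omega^s}^2 \le Ch^{2k+2}\norm[0]{u^m}_{k+1,\Omega^s}^2 \le Ch^{2k}\norm[0]{u}_{L^{\infty}(J;H^{k+1}(\Omega^s))}^2$ for $h\le 1$. For the energy-norm term I would estimate $\tnorm{\boldsymbol{e}_u^{I,n+1}}_v^2$ using \cref{eq:PiVuumKestimate} (with Sobolev indices $0$ and $1$), the commuting property \cref{eq:interpolant_a} of $\Pi_V$ (so that $\nabla\cdot\Pi_V u$ is the local $L^2$-projection of $\nabla\cdot u$, yielding an $O(h^k)$ bound on $\norm[0]{e_u^{I}}_{\mathrm{div};\Omega^d}$), the scaled trace inequality \cref{eq:trace-1}, and the approximation properties of the facet projection $\bar\Pi_V$, exactly as in \cite[Lemma 7]{Cesmelioglu:2023}; this gives $\tnorm{\boldsymbol{e}_u^{I,n+1}}_v^2 \le Ch^{2k}\norm[0]{u^{n+1}}_{k+1,\Omega}^2$, and summing in time with $\Delta t\, m \le T$ yields $c_{ae}\mu\Delta t\sum_{n=0}^{m-1}\tnorm{\boldsymbol{e}_u^{I,n+1}}_v^2 \le Ch^{2k}\norm[0]{u}_{L^{\infty}(J;H^{k+1}(\Omega))}^2$. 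Collecting the two pieces produces precisely the additional $h^{2k}\norm[0]{u}_{L^{\infty}(J;H^{k+1}(\Omega))}^2$ contribution — the ``$1+$'' inside the first brace of the claimed bound — and the estimate follows.

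There is no serious obstacle here: the statement is essentially the triangle inequality combined with the interpolation estimates already recorded in the excerpt. The only points requiring care are (i) the interface and facet contributions hidden in $\tnorm{\cdot}_v$, in particular the term $\norm[0]{\bar e_u^{I,t}}_{\Gamma^I}$ and the $\Omega^d$ pieces (the divergence term and the normal-jump terms), which are handled via the normal-component–preserving property \cref{eq:interpolant_b} of $\Pi_V$ together with standard scaled trace estimates; and (ii) checking that the absorption of $C_G$ into the generic constant is legitimate, which is exactly where the $W^1_3$-in-time regularity of $u^s$ enters. Everything else is routine.
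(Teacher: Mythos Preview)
Your proposal is correct and follows exactly the paper's approach: the paper's proof is the one-line remark ``By a triangle inequality and properties of the interpolant $\Pi_V$ and projection $\bar{\Pi}_V$,'' and you have simply spelled out those two ingredients, including the reference to \cite[Lemma~7]{Cesmelioglu:2023} for $\tnorm{\boldsymbol{e}_u^{I}}_v\le Ch^k\norm[0]{u}_{k+1,\Omega}$ and the observation that the interpolation contribution is precisely the ``$1+$'' appearing in the corollary. Your explicit remark on absorbing $C_G$ is a detail the paper leaves implicit.
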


\section{Numerical examples}
\label{s:numexamples}
  
We implement the fully discrete HDG method \cref{eq:hdg-fully} in
Netgen/NGSolve \cite{Schoberl:1997,Schoberl:2014}. For all examples we
choose the penalty parameter as $\beta = 8k^2$ (see
\cite{Ainsworth:2012,Riviere:book}), where $k$ is the polynomial
degree in the approximation spaces.

\subsection{Rates of convergence}
\label{ss:rofc}
  
In this section we verify the rates of convergence by the method of
manufactured solutions. For this we consider the domains
$\Omega^s = (0,1) \times (0,0.5)$ and
$\Omega^d = (0,1) \times (-0.5, 0)$. The interface is given by
$\Gamma^I = \overline{\Omega}^s \cap \overline{\Omega}^d$ while
$\Gamma_D^s = \cbr{x \in \Gamma^s:\ x_1=0 \text{ or } x_2=0.5}$,
$\Gamma_N^s = \Gamma^s \backslash \Gamma_D^s$,
$\Gamma_D^d = \cbr{x \in \Gamma^d:\ x_2=-0.5}$, and
$\Gamma_N^d = \Gamma^d \backslash \Gamma_D^d$. To construct a
manufactured solution, we consider the following inhomogeneous
boundary conditions and modified interface conditions:
\begin{align*}
  u^s &= U^s & & \text{on } \Gamma^{s}_D \times J,
  \\
  \sigma_d^s n &= S^{s} & & \text{on } \Gamma^s_{N} \times J,
  \\
  u^d \cdot n &= U^d & & \text{on } \Gamma^d_N \times J,
  \\
  p^d &= P^d & & \text{on } \Gamma^d_D \times J,
  \\
  u^s \cdot n &= u^d \cdot n + M^u & & \text{on } \Gamma^I \times J,
  \\
  -2\mu\del[0]{\varepsilon(u^s)n}^t &= \alpha\mu\kappa^{-1/2} (u^s)^t + (M^e)^t
             & & \text{on } \Gamma^I \times J,
  \\
  (\sigma_d^s n)\cdot n &= p^d + M^p & & \text{on } \Gamma^I \times J,  
\end{align*}
where $U^s$, $S^s$, $U^d$, $P^d$, $M^u$, $M^e$, and $M^p$, and the
functions $f^s$ and $f^d$ in \cref{eq:ns-a,eq:darcy-b} are chosen such
that the exact solution is given by:
\begin{align*}
  p^s
  &= \sin(3x_1-t)\cos(4x_2) + \sin(2\pi x_1x_2),
  &
  u^s
  &=
  \begin{bmatrix}
    \pi x_1 \cos(\pi x_1 x_2 - t) + 1\\
    -\pi x_2 \cos(\pi x_1 x_2 - t) + 2x_1
  \end{bmatrix},
  \\
  p^d &= \cos(3x_1x_2-t/10),
  &
  u^d
  &= -(\kappa/\mu)\nabla p^d.
\end{align*}
The initial condition for the velocity is set by first solving the
stationary Stokes--Darcy problem with the above boundary/interface
conditions and functions $f^s$ and $f^d$. In our simulations we choose
$\kappa=10^{-4}$ and $\alpha=1$. We consider polynomial degrees $k=1$
(corresponding to approximating the cell pressure by piecewise
constants and the other unknowns by piecewise linear polynomials) and
$k=2$ (in which the cell pressure is approximated by piecewise linears
and the other unknowns by piecewise quadratic polynomials). We compare
results obtained by choosing $\mu=10^{-1}$, $\mu=10^{-3}$, and
$\mu=10^{-5}$.

Let us define $e_u := u - u_h$ and, similar to \cite{Girault:2009},
$\norm[0]{e_u}_E^2 := \del[1]{\sum_{K \in \mathcal{T}^s} |e_u|_{1,K}^2
  + \norm[0]{e_u}_{\Omega^d}^2}$. From Corollary
\ref{cor:velocityestimate} we expect that, for smooth enough
solutions, $\norm[0]{e_u}_E = \mathcal{O}(h^k + \Delta t)$. The
spatial rate of convergence is indeed observed in
\cref{tab:rates_tc1_h-rates} (to obtain these results we chose our
time step as $\Delta t = 0.8 h^{k+1}$ and set
$J=(0,0.1)$). \Cref{tab:rates_tc1_h-rates} also lists the $L^2$-norm
of $e_u$ and $e_p := p - p_h$. For the velocity we observe that
$\norm[0]{e_u}_{\Omega} = \mathcal{O}(h^{k+1})$ for $\mu=10^{-1}$ and
$\norm[0]{e_u}_{\Omega} \approx \mathcal{O}(h^{k+1/2})$ for
$\mu=10^{-5}$. For $\mu = 10^{-3}$ we have that
$\norm[0]{e_u}_{\Omega}$ lies between $\mathcal{O}(h^{k+1/2})$ and
$\mathcal{O}(h^{k+1})$, depending on whether $k=1$ or $k=2$. The
slower convergence in the $L^2$-norm for $\mu=10^{-5}$ is not
surprising; the flow problem is advection dominated and analysis of
HDG methods for the scalar advection equation reveals a priori error
estimates for the solution to be $\mathcal{O}(h^{k+1/2})$, see
\cite[Lemma 4.8]{Wells:2011}. We furthermore observe optimal rates of
convergence for the pressure:
$\norm[0]{e_p}_{\Omega} = \mathcal{O}(h^k)$.

We next consider the temporal rates of convergence. For this we
consider a fine mesh with 9508 cells and set $k=2$ and $J=(0,1)$. In
\cref{tab:rates_tc1_dt-rates} we vary the time step and present the
errors and rates of convergence. All errors are
$\mathcal{O}(\Delta t)$.

Finally, let us remark that despite our analysis holding only under
the small data assumption (see \cref{eq:dataassumption}), we are
nevertheless able to compute the solution for very small values of
viscosity. From \cref{tab:rates_tc1_h-rates,tab:rates_tc1_dt-rates} we
even observe that the variation in $\norm[0]{e_u}_E$ for the different
values of $\mu$ is small, despite the upper bound in Corollary
\ref{cor:velocityestimate} depending on $\mu$ and $\mu^{-1}$.

\begin{table}[tbp]
  \caption{Errors and spatial rates of convergence for a manufactured
    solution (see~\cref{ss:rofc}). Results are for $k=1$ and $k=2$
    with parameters $\kappa=10^{-4}$, $\alpha=1$, and
    $\mu\in\{10^{-1}, 10^{-3}, 10^{-5}\}$. Here $e_u = u-u_h$ and
    $e_p = p-p_h$. The rate of convergence is denoted by $r$.  }
  \begin{center}
    \begin{tabular}{ccccccc}
      \hline
      Cells & $\norm[0]{e_u}_E$ & $r$ & $\norm[0]{e_u}_{\Omega}$ & $r$ & $\norm[0]{e_p}_{\Omega}$ & $r$ \\
      \hline
      \multicolumn{7}{l}{$k=1$, $\mu=10^{-1}$} \\
      152 & 4.8e-01 & 0.9 & 9.4e-03 & 1.9 & 1.1e-01 & 0.7 \\
      580 & 2.1e-01 & 1.2 & 1.8e-03 & 2.4 & 4.5e-02 & 1.3 \\
      2362 & 1.0e-01 & 1.0 & 4.2e-04 & 2.1 & 2.2e-02 & 1.0 \\
      9508 & 5.1e-02 & 1.0 & 9.8e-05 & 2.1 & 1.0e-02 & 1.1 \\
      \multicolumn{7}{l}{$k=1$, $\mu=10^{-3}$} \\
      152 & 5.5e-01 & 0.9 & 1.4e-02 & 2.0 & 6.7e-02 & 1.1 \\
      580 & 2.5e-01 & 1.1 & 3.9e-03 & 1.9 & 3.3e-02 & 1.0 \\
      2362 & 1.2e-01 & 1.1 & 1.2e-03 & 1.7 & 1.6e-02 & 1.0 \\
      9508 & 5.6e-02 & 1.1 & 3.6e-04 & 1.7 & 7.9e-03 & 1.0 \\
      \multicolumn{7}{l}{$k=1$, $\mu=10^{-5}$} \\
      152 & 2.4e+00 & 3.8 & 1.5e-01 & 4.3 & 7.8e-02 & 6.5 \\
      580 & 3.2e-01 & 2.9 & 2.5e-02 & 2.6 & 3.3e-02 & 1.3 \\
      2362 & 1.4e-01 & 1.2 & 5.6e-03 & 2.2 & 1.6e-02 & 1.0 \\
      9508 & 7.2e-02 & 0.9 & 1.6e-03 & 1.8 & 7.9e-03 & 1.0 \\
      \hline
      \multicolumn{7}{l}{$k=2$, $\mu=10^{-1}$} \\
      152 & 3.7e-02 & 2.1 & 5.9e-04 & 3.1 & 8.7e-03 & 2.5 \\
      580 & 7.6e-03 & 2.3 & 5.6e-05 & 3.4 & 1.9e-03 & 2.2 \\
      2362 & 1.7e-03 & 2.2 & 5.6e-06 & 3.3 & 4.8e-04 & 2.0 \\
      9508 & 4.0e-04 & 2.1 & 6.4e-07 & 3.1 & 1.2e-04 & 2.1 \\      
      \multicolumn{7}{l}{$k=2$, $\mu=10^{-3}$} \\
      152 & 4.7e-02 & 2.0 & 1.0e-03 & 2.7 & 5.5e-03 & 2.2 \\
      580 & 9.2e-03 & 2.3 & 1.3e-04 & 3.0 & 1.3e-03 & 2.1 \\
      2362 & 2.0e-03 & 2.2 & 1.6e-05 & 2.9 & 3.0e-04 & 2.1 \\
      9508 & 4.9e-04 & 2.0 & 2.2e-06 & 2.9 & 7.6e-05 & 2.0 \\
      \multicolumn{7}{l}{$k=2$, $\mu=10^{-5}$} \\
      152 & 5.9e-02 & 4.0 & 3.6e-03 & 4.2 & 5.5e-03 & 3.1 \\
      580 & 1.0e-02 & 2.5 & 4.3e-04 & 3.1 & 1.3e-03 & 2.1 \\
      2362 & 2.4e-03 & 2.1 & 6.1e-05 & 2.8 & 3.0e-04 & 2.1 \\
      9508 & 5.3e-04 & 2.1 & 1.1e-05 & 2.5 & 7.6e-05 & 2.0 \\
      \hline
    \end{tabular}
    \label{tab:rates_tc1_h-rates}
  \end{center}
\end{table}

\begin{table}[tbp]
  \caption{Errors and temporal rates of convergence for a manufactured
    solution (see~\cref{ss:rofc}). Parameters: $k=2$,
    $\kappa=10^{-4}$, $\alpha=1$, and
    $\mu\in\{10^{-1}, 10^{-3}, 10^{-5}\}$. Here $e_u = u-u_h$ and
    $e_p = p-p_h$. The rate of convergence is denoted by $r$.  }
  \begin{center}
    \begin{tabular}{ccccccc}
      \hline
      $\Delta t$ & $\norm[0]{e_u}_E$ & $r$ & $\norm[0]{e_u}_{\Omega}$ & $r$ & $\norm[0]{e_p}_{\Omega}$ & $r$ \\
      \hline
      \multicolumn{7}{l}{$\mu=10^{-1}$} \\
      1/8  & 3.5e-02 & 1.1 & 2.4e-03 & 1.1 & 8.0e-02 & 0.9 \\
      1/16 & 1.7e-02 & 1.1 & 1.2e-03 & 1.0 & 4.2e-02 & 0.9 \\
      1/32 & 8.2e-03 & 1.0 & 5.8e-04 & 1.0 & 2.1e-02 & 1.0 \\
      1/64 & 4.1e-03 & 1.0 & 2.9e-04 & 1.0 & 1.1e-02 & 1.0 \\
      \multicolumn{7}{l}{$\mu=10^{-3}$} \\
      1/8  & 1.5e-01 & 1.0 & 2.1e-02 & 0.9 & 3.0e-02 & 0.8 \\
      1/16 & 7.7e-02 & 0.9 & 1.1e-02 & 0.9 & 1.6e-02 & 0.9 \\
      1/32 & 4.0e-02 & 1.0 & 5.5e-03 & 1.0 & 8.3e-03 & 1.0 \\
      1/64 & 2.0e-02 & 1.0 & 2.8e-03 & 1.0 & 4.2e-03 & 1.0 \\
      \multicolumn{7}{l}{$\mu=10^{-5}$} \\
      1/8  & 1.5e-01 & 0.9 & 2.8e-02 & 0.9 & 2.2e-02 & 0.7 \\
      1/16 & 7.8e-02 & 0.9 & 1.5e-02 & 0.9 & 1.2e-02 & 0.9 \\
      1/32 & 4.0e-02 & 1.0 & 7.4e-03 & 1.0 & 6.2e-03 & 0.9 \\
      1/64 & 2.0e-02 & 1.0 & 3.8e-03 & 1.0 & 3.2e-03 & 1.0 \\
      \hline
    \end{tabular}
    \label{tab:rates_tc1_dt-rates}
  \end{center}
\end{table}

\subsection{Surface/subsurface flow with nonuniform permeability field}
\label{ss:applicationtest}

In this example we consider surface/subsurface flow. For this example
we divide the domain $\Omega = (0,1) \times (-0.5, 0.5)$ into two
subdomains $\Omega^s$ and $\Omega^d$. We consider a case where the
interface $\Gamma^I = \overline{\Omega^s} \cap \overline{\Omega^d}$ is
not horizontal (see \cref{fig:ssdomain-dom}). Furthermore, let
$\Gamma_D^d = \cbr{x \in \Gamma^d:\ x_2=-0.5}$, and
$\Gamma_N^d = \Gamma^d \backslash \Gamma_D^d$. We then impose the
following boundary conditions:
\begin{align*}
  u^s &= (\tfrac{5}{42}(10 x_2 + 1)(1-x_1/5)(\cos(\pi t/5)+\tfrac{11}{10}), 0) && \text{on } \Gamma^s \times J,
  \\
  u^d \cdot n &= 0 && \text{on } \Gamma_N^d \times J,
  \\
  p^d &= 0 && \text{on } \Gamma_D^d \times J,
\end{align*}
and set $f^s=0$ and $f^d=0$. We consider both $\mu=10^{-1}$ and
$\mu = 10^{-3}$ together with $\alpha=0.5$, and choose the permeability to
be piecewise constant such that $\mu^{-1}\kappa = 10^{-r}$ with
$r \in [2,6]$ a random number that is chosen differently in each
element of the mesh in $\Omega^d$. (The analysis presented in this
paper assumes a constant permeability, but noting that
$0 < \kappa_{\min} \le \kappa(x) \le \kappa_{\max}$ the analysis is
easily extended to this situation.) A plot of the permeability is
given in \cref{fig:ssdomain-perm}. To set the initial condition for
the velocity in $\Omega^s$ we solve the stationary Stokes--Darcy
problem.

We compute the solution on a mesh consisting of 91720 elements, using
$k=2$, a time step of $\Delta t = 0.01$, and on the time interval
$J=(0,10)$. Plots of the velocity and pressure fields at different
time levels are shown in \cref{fig:ssvelocity,fig:sspressure}, both
for $\mu=10^{-1}$ and $\mu=10^{-3}$. The velocity fields at $t=0$ and
$t=10$ for both values of viscosity are similar: flow in $\Omega^s$
away from the interface is more or less horizontal while in $\Omega^d$
flow finds its way through the permeability maze in the direction of
negative pressure gradient. At $t=5.2$ (when the inflow magnitude of
the velocity is close to its minimum), the behavior of the velocity
fields when $\mu=10^{-1}$ and $\mu=10^{-3}$ are significantly
different: when $\mu=10^{-1}$ the velocity field is similar to that at
$t=0$ and $t=10$, but when $\mu=10^{-3}$ we obtain a large area of
circulation. The pressure fields are similar for the two values of
viscosity and follow a more or less linear profile in
$\Omega^d$. Pressure variations in $\Omega^s$ are small.

\begin{figure}[tbp]
  \centering
  \subfloat[Domain. \label{fig:ssdomain-dom}]{\includegraphics[width=0.51\textwidth]{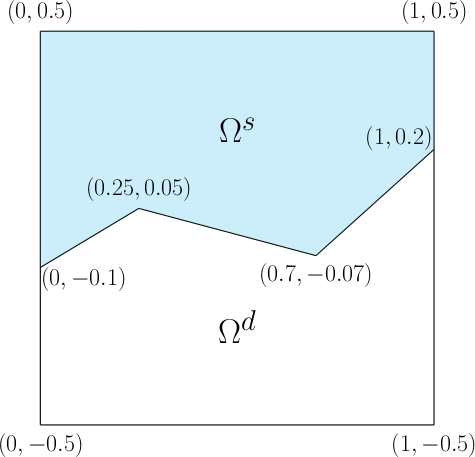}}
  \quad
  \subfloat[Permeability. \label{fig:ssdomain-perm}]{\includegraphics[width=0.45\textwidth]{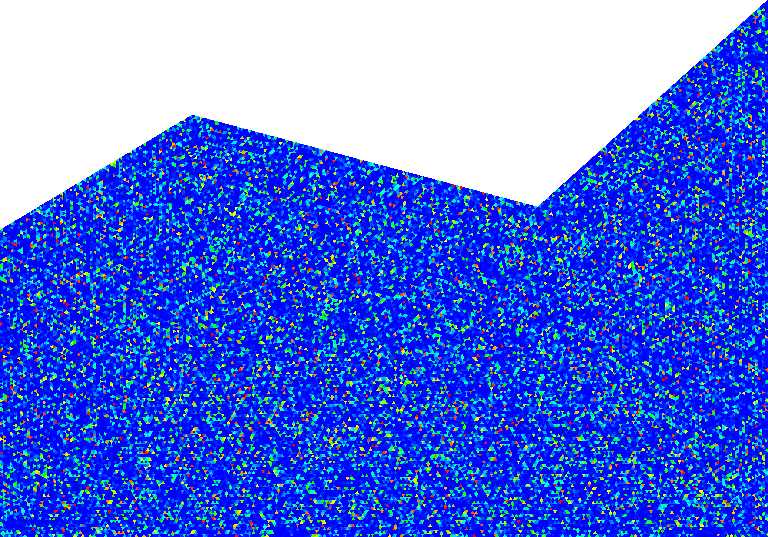}}
  \caption{The surface/subsurface domain $\Omega$ used in
    \cref{ss:applicationtest}.}
  \label{fig:ssdomain}
\end{figure}

\begin{figure}[tbp]
  \centering
  \subfloat[$\mu=10^{-1}$, $t=0$. \label{fig:ssvelocity-1em1-t1}]{\includegraphics[width=0.4\textwidth]{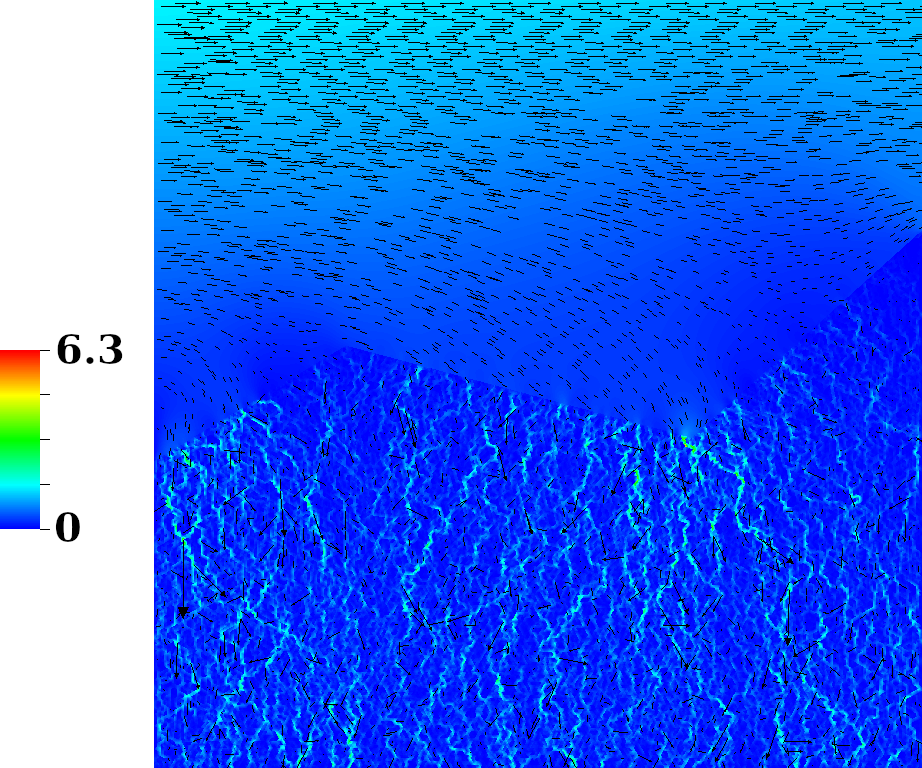}}
  \quad
  \subfloat[$\mu=10^{-3}$, $t=0$. \label{fig:ssvelocity-1em3-t1}]{\includegraphics[width=0.4\textwidth]{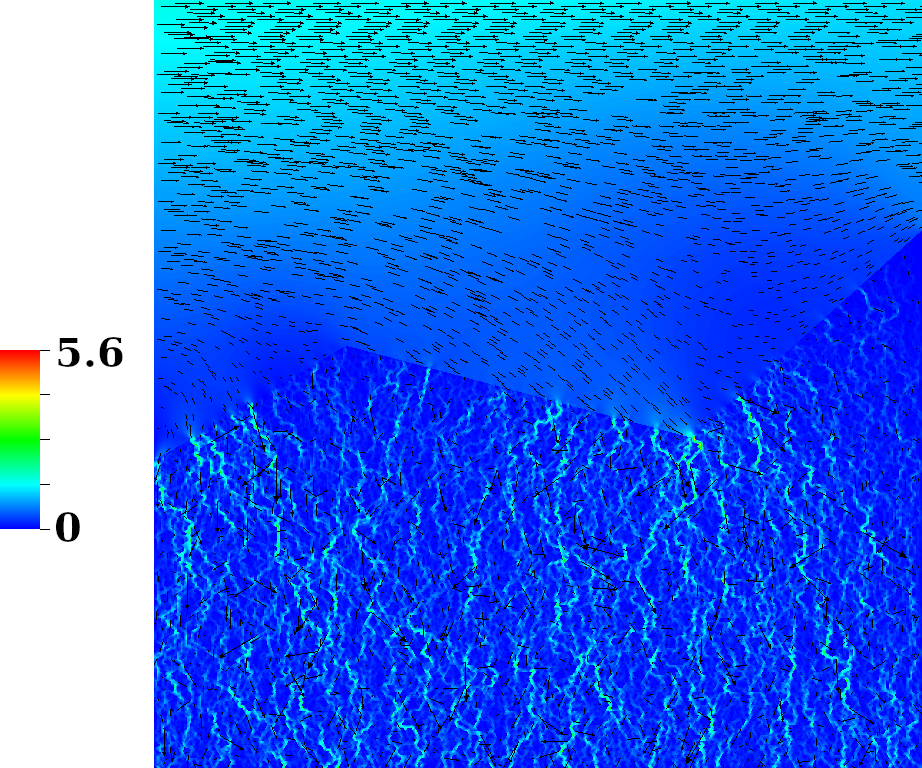}}
  \\
  \subfloat[$\mu=10^{-1}$, $t=5.2$. \label{fig:ssvelocity-1em1-t26}]{\includegraphics[width=0.4\textwidth]{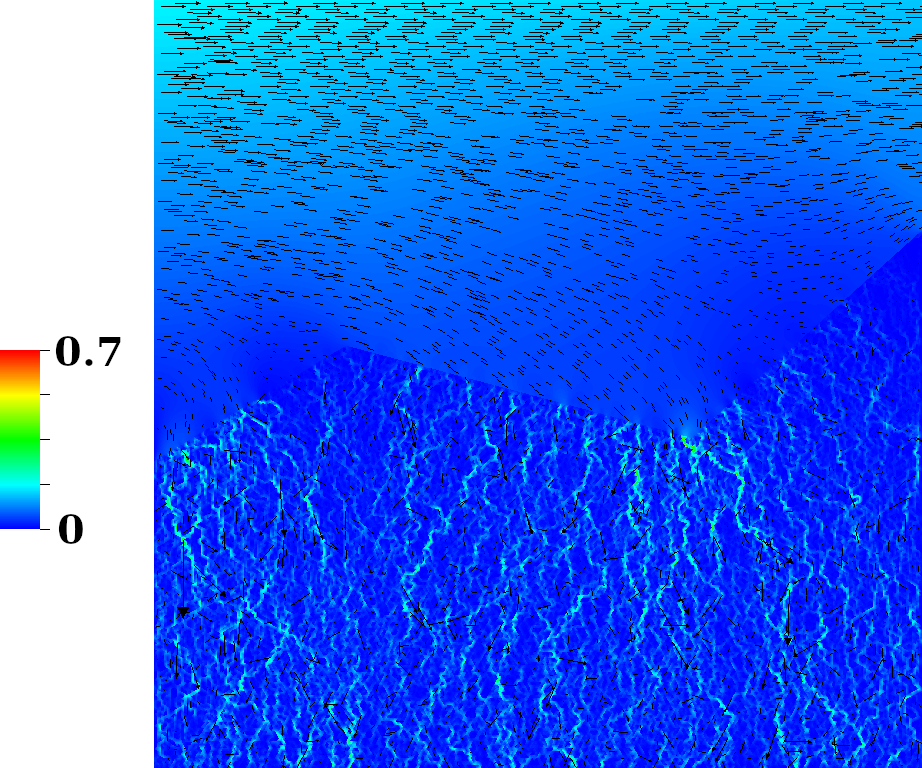}}
  \quad
  \subfloat[$\mu=10^{-3}$, $t=5.2$. \label{fig:ssvelocity-1em3-t26}]{\includegraphics[width=0.4\textwidth]{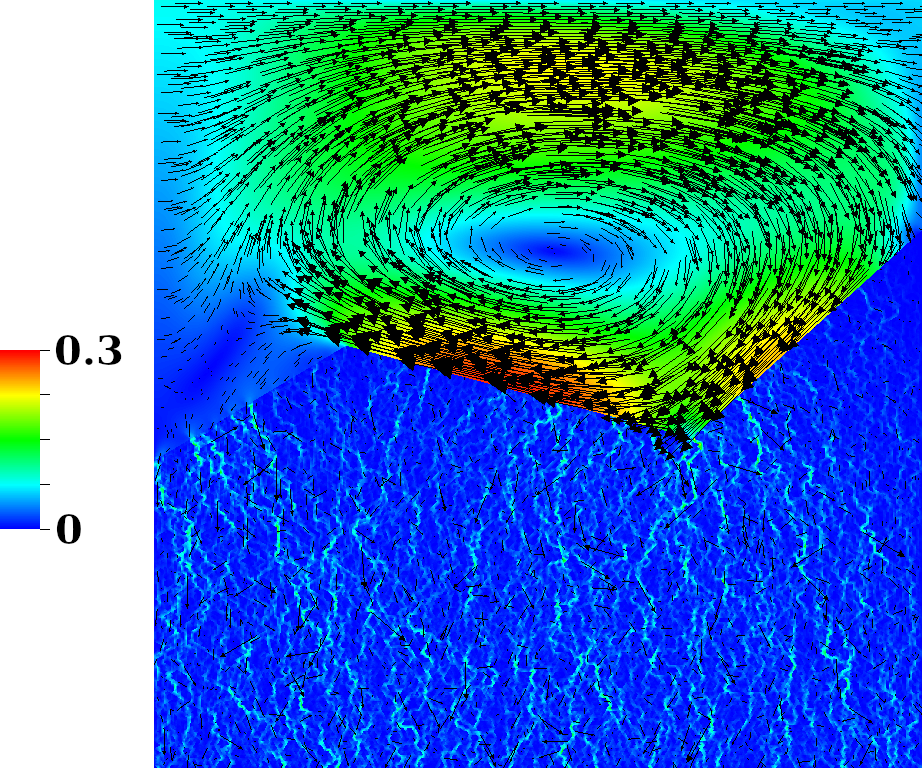}}
  \\
  \subfloat[$\mu=10^{-1}$, $t=10$. \label{fig:ssvelocity-1em1-t50}]{\includegraphics[width=0.4\textwidth]{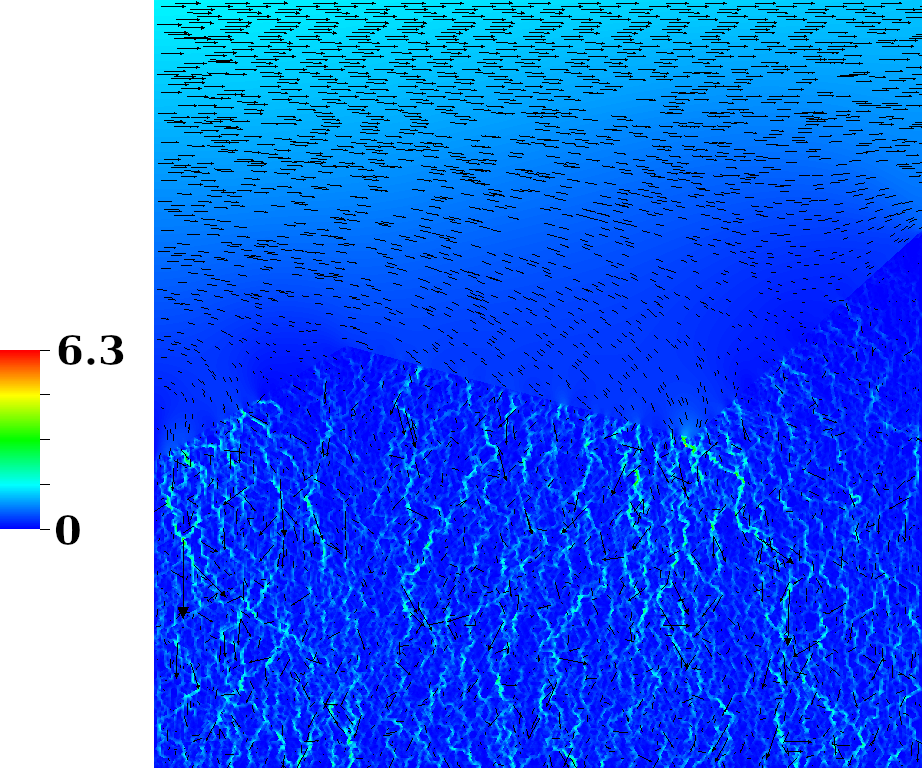}}
  \quad
  \subfloat[$\mu=10^{-3}$, $t=10$. \label{fig:ssvelocity-1em3-t50}]{\includegraphics[width=0.4\textwidth]{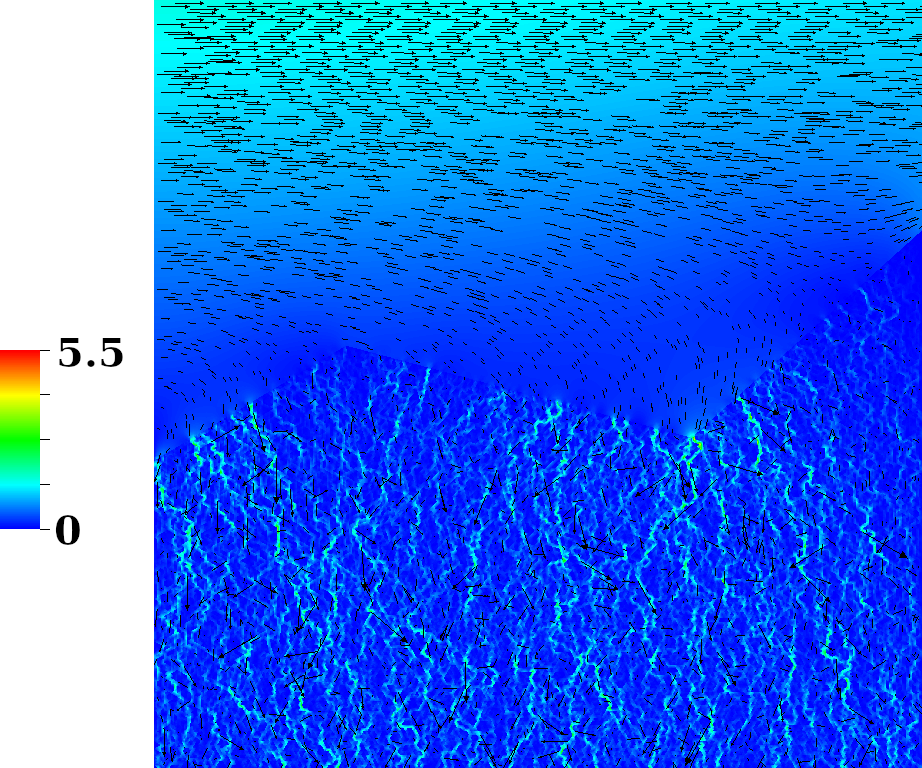}}  
  \caption{Velocity magnitude and velocity vector field at time levels
    $t=0$, $t=5.2$, and $t=10$. Left column: $\mu=10^{-1}$. Right
    column: $\mu=10^{-3}$. See also \cref{ss:applicationtest}.}
  \label{fig:ssvelocity}
\end{figure}

\begin{figure}[tbp]
  \centering
  \subfloat[$\mu=10^{-1}$, $t=0$. \label{fig:sspressure-1em1-t1}]{\includegraphics[width=0.4\textwidth]{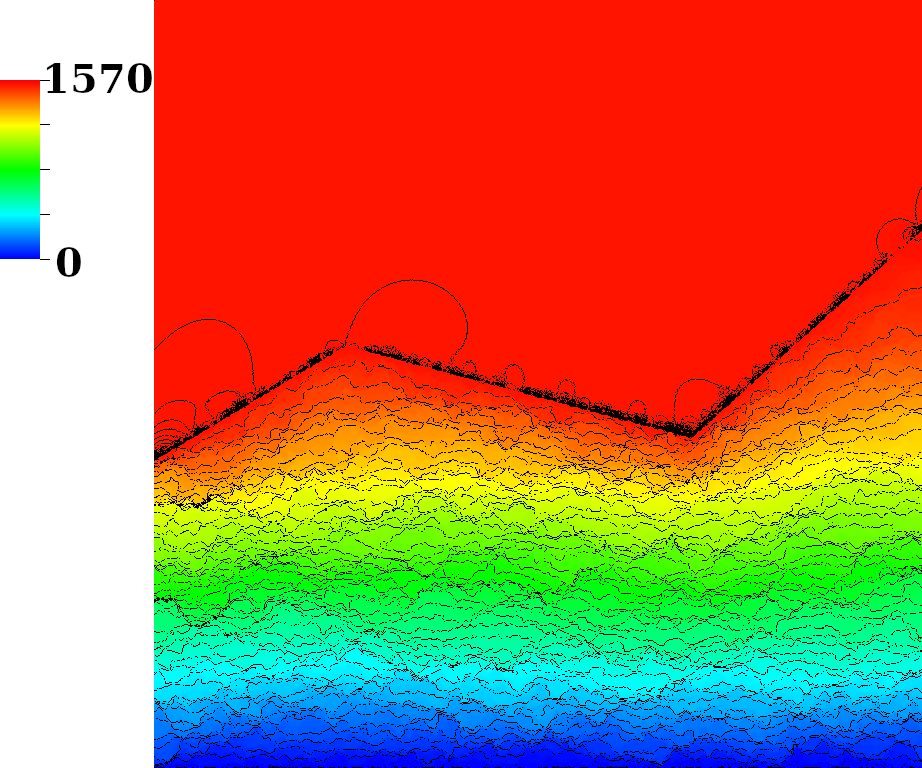}}
  \quad
  \subfloat[$\mu=10^{-3}$, $t=0$. \label{fig:sspressure-1em3-t1}]{\includegraphics[width=0.4\textwidth]{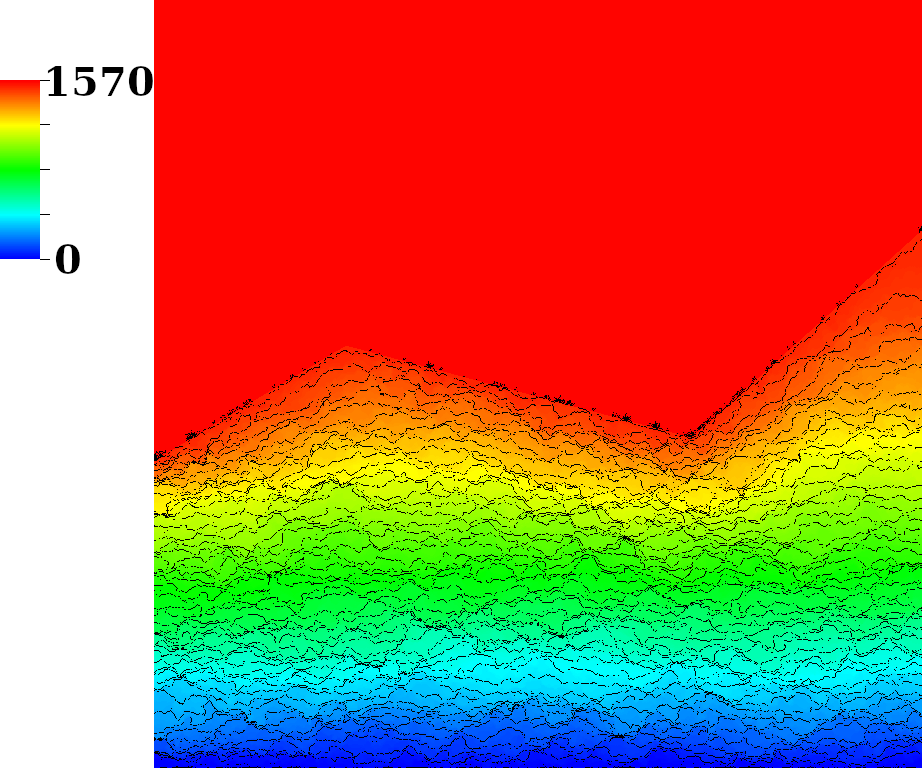}}
  \\
  \subfloat[$\mu=10^{-1}$, $t=5.2$. \label{fig:sspressure-1em1-t26}]{\includegraphics[width=0.4\textwidth]{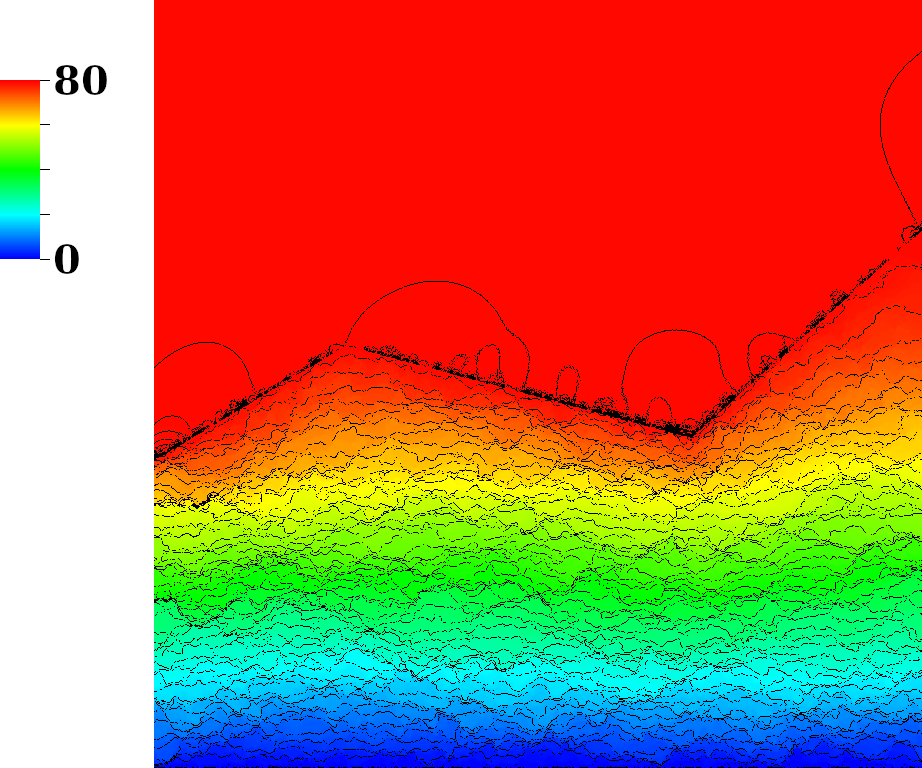}}
  \quad
  \subfloat[$\mu=10^{-3}$, $t=5.2$. \label{fig:sspressure-1em3-t26}]{\includegraphics[width=0.4\textwidth]{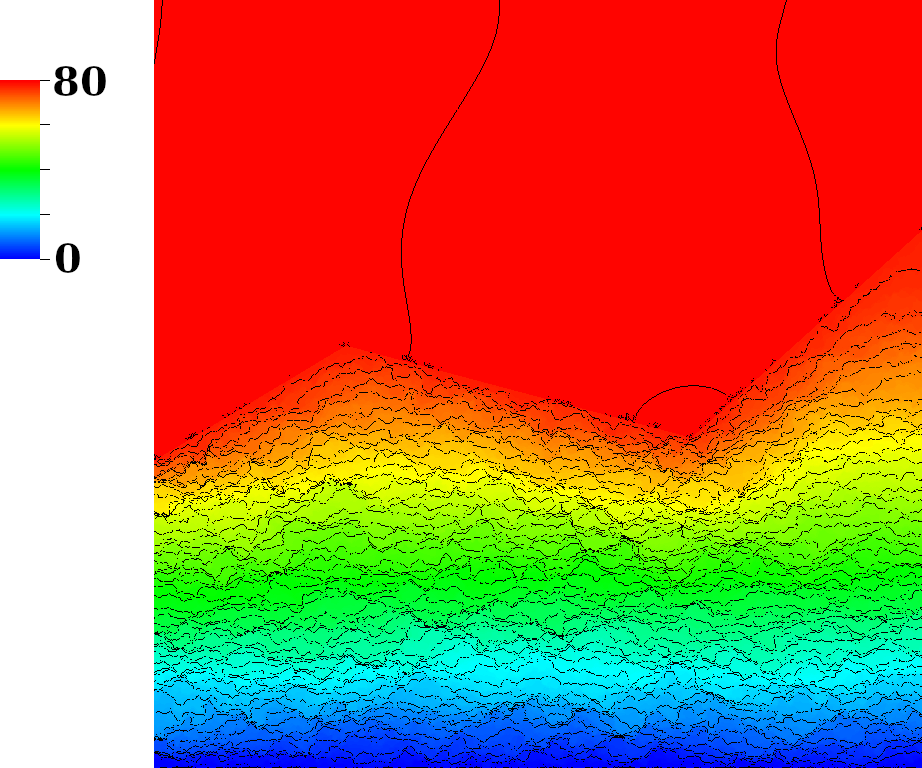}}
  \\
  \subfloat[$\mu=10^{-1}$, $t=10$. \label{fig:sspressure-1em1-t50}]{\includegraphics[width=0.4\textwidth]{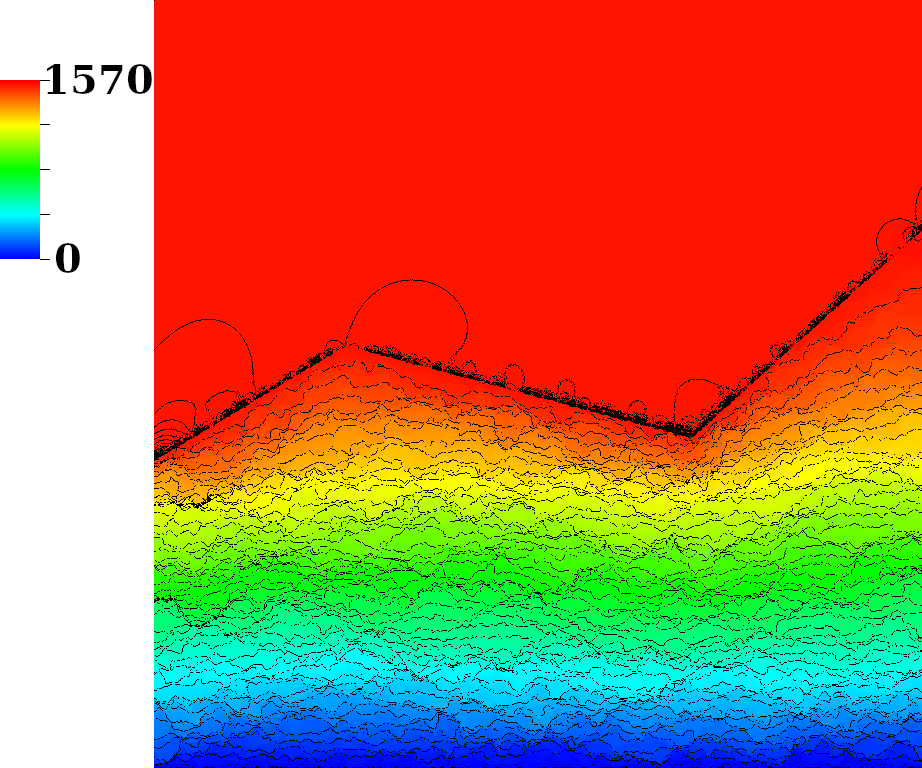}}
  \quad
  \subfloat[$\mu=10^{-3}$, $t=10$. \label{fig:sspressure-1em3-t50}]{\includegraphics[width=0.4\textwidth]{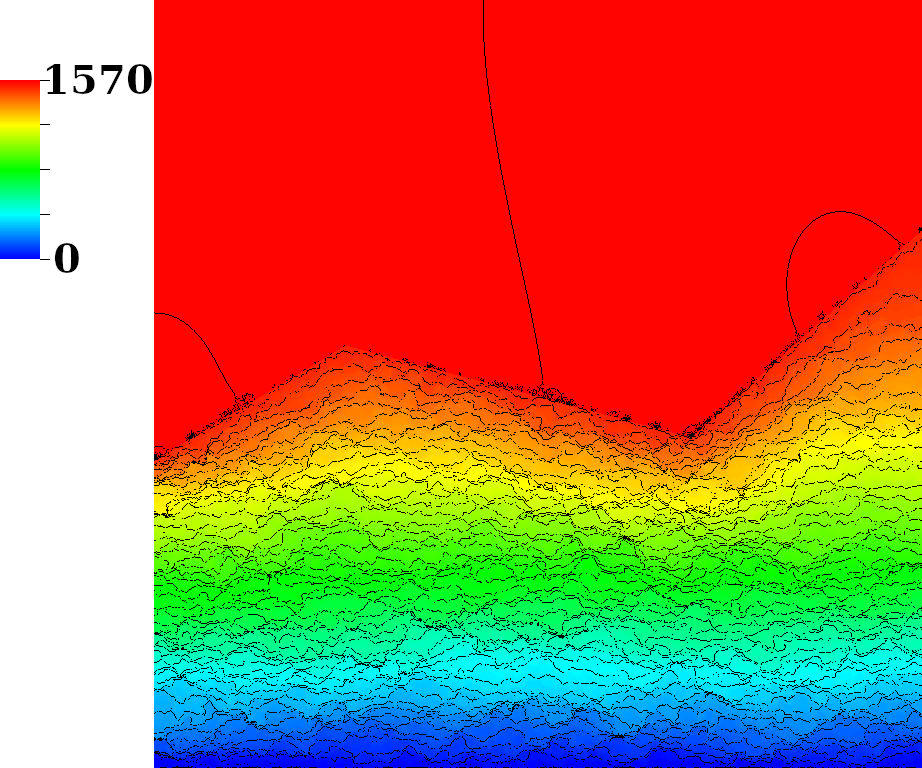}}  
  \caption{Pressure magnitude and contour plots at time levels $t=0$,
    $t=5.2$, and $t=10$. Left column: $\mu=10^{-1}$. Right column:
    $\mu=10^{-3}$. For visualization purposes at $t=0$ and $t=10$ we
    consider in $\Omega^d$ the pressure interval $[0, 1570]$ divided
    into 50 contour lines using a linear scale while in $\Omega^s$ we
    consider the pressure interval $[1500, 1570]$ divided into 100
    contour lines using a log scale. At $t=5.2$ we consider in
    $\Omega^d$ and $\Omega^s$ the pressure intervals $[0, 80]$ and
    $[75,80]$, respectively. See also \cref{ss:applicationtest}.}
  \label{fig:sspressure}
\end{figure}

\section{Conclusions}
\label{s:conclusions}

We presented a strongly conservative HDG method for the coupled
time-dependent Navier--Stokes and Darcy problem. Existence and
uniqueness of a solution to the fully discrete problem were proven
assuming a small data assumption. We furthermore determined a
pressure-independent a priori error estimate for the discrete
velocity. This estimate is optimal in space in the combined discrete
$H^1$-norm on $\Omega^s$ and $H(\text{div})$-norm on $\Omega^d$, and
optimal in time. Our analysis is supported by numerical examples.

\subsubsection*{Acknowledgements}

AC and JJL are funded by the National Science Foundation under grant
numbers DMS-2110782 and DMS-2110781. SR is funded by the Natural
Sciences and Engineering Research Council of Canada through the
Discovery Grant program (RGPIN-05606-2015).

\bibliographystyle{abbrvnat}
\bibliography{references}
\appendix
\section{Proof of the inf-sup condition \cref{eq:infsupbh}}
\label{ap:infsup}

An inf-sup condition of the form \cref{eq:infsupbh} was proven in
\cite[Lemma 2]{Cesmelioglu:2023} assuming that $u=0$ on $\Gamma^s$ and
$u \cdot n = 0$ on $\Gamma^d$. We modify this proof to take into
account the boundary conditions
\cref{eq:ic-bc-b,eq:ic-bc-c,eq:ic-bc-d}. The proof requires the
BDM interpolation operator $\Pi_V: H({\rm div};\Omega) \cap \sbr[0]{L^r(\Omega)}^{\dim}
\rightarrow X_h\cap H({\rm div};\Omega)$, $r>2$, which satisfies
\cref{eq:interpolant_a,eq:interpolant_b,eq:PiVuumKestimate} for all
$u \in \sbr[0]{H^{k+1}(K)}^{\dim}$. We will also require the following
function space:
\begin{equation*}
  \sbr[0]{H^1_{0,\Gamma^s\cup\Gamma_N^d}(\Omega)}^{\dim}
  := \cbr[0]{w \in \sbr[0]{H^1(\Omega)}^{\dim} \, :\,
    w|_{\Gamma^s\cup\Gamma_N^d}=0}.
\end{equation*}
Defining
\begin{equation*}
  \begin{split}
    \tilde{b}_h(\boldsymbol{v}_h, (\bar{q}^s_h,\bar{q}^d_h))
    &:= \sum_{j=s,d}\del[2]{ \langle \bar{q}_h^j, v_h \cdot n^j \rangle_{\partial\mathcal{T}_h^j}
      - \langle \bar{q}_h^j, \bar{v}_h \cdot n^j \rangle_{\Gamma^I} },
    \\
    \text{Ker}(\tilde{b}_h)
    &:= \cbr[0]{\boldsymbol{v}_h \in
      \boldsymbol{X}_h\,:\, \tilde{b}_h(\boldsymbol{v}_h, (\bar{q}^s_h,
      \bar{q}^d_h)) = 0 \ \forall (\bar{q}_h^s,\bar{q}_h^d) \in
      \bar{Q}_h^s \times \bar{Q}_h^d},
  \end{split}
\end{equation*}
and noting that
$b_h(\boldsymbol{v}_h, \boldsymbol{q}_h) = -(q_h, \nabla \cdot
v_h)_{\Omega} + \tilde{b}_h(\boldsymbol{v}_h, (\bar{q}^s_h,
\bar{q}^d_h))$, by \cite[Theorem 3.1]{Howell:2011} the inf-sup
condition \cref{eq:infsupbh} holds for all
$\boldsymbol{q}_h \in \boldsymbol{Q}_h$ if there exist constants
$c_{b1}>0$ and $c_{b2}>0$, independent of $h$ and $\Delta t$, such
that
\begin{subequations}
  \begin{align}
    \label{eq:infsup-b1}
    c_{b1} \norm[0]{q_h}_{\Omega}
    &\le \sup_{\substack{\boldsymbol{v}_h \in \text{Ker}(\tilde{b}_h) \\ \boldsymbol{v}_h \ne 0}}
    \frac{ -(q_h, \nabla \cdot v_h)_{\Omega}}{\tnorm{\boldsymbol{v}_h}_v} && \forall q_h \in Q_h,
    \\
    \label{eq:infsup-b2}
    \del[2]{c_{b2}\sum_{j=s,d}\sum_{K\in\mathcal{T}_h^j}h_K\norm[0]{\bar{q}_h^j}^2_{\partial K}}^{1/2}      
    &\le \sup_{\substack{\boldsymbol{v}_h \in \boldsymbol{X}_h \\ \boldsymbol{v}_h \ne 0}}
    \frac{ \tilde{b}_h(\boldsymbol{v}_h, (\bar{q}^s_h,\bar{q}^d_h))}{\tnorm{\boldsymbol{v}_h}_v}
    && \forall (\bar{q}_h^s,\bar{q}_h^d) \in \bar{Q}_h^s \times \bar{Q}_h^d.
  \end{align} 
\end{subequations}
Compared to \cite[Lemma 2]{Cesmelioglu:2023}, only the proof for
\cref{eq:infsup-b1} needs to be modified.

We first seek a suitable
$\boldsymbol{v}_h \in \text{Ker}(\tilde{b}_h)$. Let $q_h \in Q_h$. By
\cite[Remark 3.3]{JJLee:2017} there exists
$v \in \sbr[0]{H^1_{0,\Gamma^s\cup\Gamma_N^d}(\Omega)}^{\dim}$ such
that
\begin{equation}
\label{eq:inf-sup-appendix}
  -(\nabla \cdot v, q_h)_{\Omega} =  \norm[0]{q_h}_{\Omega}^2, \quad c_{vq}\norm[0]{v}_{1,\Omega} \le \norm[0]{q_h}_{\Omega},
\end{equation}
where $c_{vq}>0$ is a constant independent of $h$ and $\Delta t$. Let
$\bar{\Pi}_V:\sbr[0]{H^1(\Omega^s)}^{\dim} \to \bar{X}_h$ be the
$L^2$-projection into the facet velocity space and note that the pair
$\boldsymbol{v}_h = (\Pi_V v, \bar{\Pi}_V v)$ lies in
$\text{Ker}(\tilde{b}_h)$:
  \begin{equation*}
    \begin{split}
      \tilde{b}_h((\Pi_V v, \bar{\Pi}_V v), (\bar{q}^s_h,\bar{q}^d_h))
      =& \langle \bar{q}_h^s, (\Pi_Vv - \bar{\Pi}_Vv) \cdot n^s \rangle_{\Gamma^I} 
      + \langle \bar{q}_h^s, \Pi_Vv \cdot n^s \rangle_{\Gamma^s}
      + \langle \bar{q}_h^d, (\Pi_Vv - \bar{\Pi}_Vv) \cdot n^d \rangle_{\Gamma^I}
      + \langle \bar{q}_h^d, \Pi_Vv \cdot n^d \rangle_{\Gamma^d}
      \\
      =& \langle \bar{q}_h^s, (v - v) \cdot n^s \rangle_{\Gamma^I} + \langle \bar{q}_h^d, (v - v) \cdot n^d \rangle_{\Gamma^I} = 0,
    \end{split}
  \end{equation*}
where the first equality is because $\Pi_Vv \cdot n^j$ is continuous
on element boundaries and $\bar{q}_h^j$ is single-valued. The second
equality is by properties of $\Pi_V$ and $\bar{\Pi}_V$,
$v \cdot n^j = 0$ on $\Gamma^s \cup \Gamma_N^d$, and $\bar{q}_h^d = 0$
on $\Gamma_D^d$. Therefore,
$(\Pi_Vv, \bar{\Pi}_Vv)\in \text{Ker}(\tilde{b}_h)$.

We now proceed to find a bound for
$\tnorm{(\Pi_V v, \bar{\Pi}_V v)}_{v}$ in terms of
$\norm{v}_{1,\Omega}$. First, note that by definition,
\begin{equation*}
  \begin{split}
    \tnorm{(\Pi_V v, \bar{\Pi}_V v)}_{v,d}^2
    =&
    \norm[0]{\Pi_Vv}_{\text{div};\Omega^d}^2 
    + \sum_{F \in \mathcal{F}^d_h\backslash(\mathcal{F}_h^I\cup\mathcal{F}_h^{D,d})}h_F^{-1}\norm[0]{\jump{\Pi_Vv\cdot n}}_{F}^2
    \\
    &+ \sum_{K \in \mathcal{T}_h^d} h_K^{-1}\norm[0]{(\Pi_Vv - \bar{\Pi}_Vv)\cdot n}_{\partial K \cap \Gamma^I}^2    
    =: I_1 + I_2 + I_3.
  \end{split}
\end{equation*}
In \cite[Lemma 2]{Cesmelioglu:2023} it was shown that
$I_1 + I_3 \le C \norm[0]{v}_{1,\Omega^d}^2$. Furthermore, $I_2=0$
because $\Pi_Vv \in H(\text{div};\Omega^d)$ and $v=0$ on
$\Gamma_N^d$. Therefore,
$\tnorm{(\Pi_V v, \bar{\Pi}_V v)}_{v,d} \le C
\norm{v}_{1,\Omega^d}$. In the proof of \cite[Lemma
2]{Cesmelioglu:2023} it was also shown that
\begin{equation}
  \label{eq:tnormpivpibv}
  \tnorm{(\Pi_Vv, \bar{\Pi}_Vv)}_{v,s}
  \le C\norm[0]{v}_{1,\Omega^s},
  \quad
  \norm[0]{(\bar{\Pi}_Vv)^t}_{\Gamma^I}
  \le C\norm[0]{v}_{1,\Omega^s}.    
\end{equation}
By definition of $\tnorm{\cdot}_v$ and using the preceding bounds on
$\tnorm{(\Pi_Vv, \bar{\Pi}_Vv)}_{v,s}$,
$\norm[0]{(\bar{\Pi}_Vv)^t}_{\Gamma^I}$, and
$\tnorm{(\Pi_V v, \bar{\Pi}_V v)}_{v,d}$, we find
\begin{equation*}
  \tnorm{(\Pi_V v, \bar{\Pi}_V v)}_{v}
  \le C \norm{v}_{1,\Omega}.
\end{equation*}
\Cref{eq:infsup-b1} now follows from this and
\cref{eq:inf-sup-appendix}:
\begin{equation*}
  \sup_{\substack{\boldsymbol{v}_h \in \text{Ker}(\tilde{b}_h) \\ \boldsymbol{v}_h \ne 0}}
  \frac{-(q_h, \nabla \cdot v_h)_{\Omega}}{\tnorm{\boldsymbol{v}_h}_{v}}
  \ge
  \frac{-(q_h, \nabla \cdot \Pi_Vv)_{\Omega}}{\tnorm{(\Pi_Vv,\bar{\Pi}_V v)}_{v}}
  \ge
  \frac{\norm[0]{q_h}^2_{\Omega}}{C\norm[0]{v}_{1,\Omega}}
  \ge
  \frac{c_{vq}}{C}\norm[0]{q_h}_{\Omega}.
\end{equation*}

\section{Useful inequalities}
Let $g$ be a sufficiently smooth function. Using Taylor's theorem in
integral form, it is shown in \cite[Lemma 7.67]{John:book}) that
\begin{equation}
  \label{eq:pdtgddtg}
  \norm[0]{\partial_tg^{n+1} - d_tg^{n+1}}_{\Omega^s} \le C\sqrt{\Delta t}\norm[0]{\partial_{tt}g}_{L^2(t^n,t^{n+1};L^2(\Omega^s))}.
\end{equation}
A minor modification of the proof of \cref{eq:pdtgddtg} leads to:
\begin{equation}
  \label{ineq:taylor}
  \norm[0]{g^{n+1}-g^n}_{\Omega^s}
  \le C \Delta t (\norm[0]{\partial_tg^n}_{\Omega^s} + \norm[0]{\partial_{tt}g}_{L^2(t^{n},t^{n+1};L^2(\Omega^s))}).    
\end{equation}
We also have, by the fundamental theorem of Calculus and the
Cauchy--Schwarz inequality, that
\begin{equation}
  \label{eq:dtfnp1estimate}
  \norm[0]{g^{n+1}-g^n}_{\Omega^s} 
  = \norm[0]{\int_{t^n}^{t^{n+1}} \partial_tg \dif t}
  \le \del[2]{\int_{t^n}^{t^{n+1}}\dif t}^{1/2} \del[2]{\int_{t^n}^{t^{n+1}}\norm[0]{\partial_tg}_{\Omega^s}^2\dif t}^{1/2}
  \le \sqrt{\Delta t}\norm[0]{\partial_tg}_{L^2(t^n,t^{n+1};L^2(\Omega^s))}.
\end{equation}

\section{Proof of \cref{eq:I4}}
\label{ap:I4}
To prove \cref{eq:I4} we will use the following result, which is due
to a discrete Sobolev embedding \cite[Theorem 5.3]{Pietro:book} and
\cref{eq:dpoincareineq}:
\begin{equation}
  \label{eq:L6Sobolev}
  \del[2]{\sum_{K\in\mathcal{T}_h^s}\norm[0]{v_h}_{L^6(K)}^6}^{1/6} \le C \tnorm{\boldsymbol{v}_h}_{v,s} \quad \forall \boldsymbol{v}_h \in \boldsymbol{X}_h.
\end{equation}
Let us first write $I_4$ as:
\begin{equation*}
  \begin{split}
    I_4 
    =&[t_h(u^n; \boldsymbol{\Pi}_V u^{n+1}, \boldsymbol{e}_u^{h,n+1}) - t_h(u_h^n; \boldsymbol{\Pi}_V u^{n+1}, \boldsymbol{e}_u^{h,n+1})]
    \\
    =&[t_h(u^n; \boldsymbol{u}^{n+1}, \boldsymbol{e}_u^{h,n+1}) - t_h(u^n; \boldsymbol{u}^{n+1} - \boldsymbol{\Pi}_V u^{n+1}, \boldsymbol{e}_u^{h,n+1})]
    \\
    &-
    [t_h(u_h^n; \boldsymbol{u}^{n+1}, \boldsymbol{e}_u^{h,n+1}) - t_h(u_h^n; \boldsymbol{u}^{n+1} - \boldsymbol{\Pi}_V u^{n+1}, \boldsymbol{e}_u^{h,n+1})]
    \\
    =&[t_h(u^n; \boldsymbol{u}^{n+1}, \boldsymbol{e}_u^{h,n+1}) - t_h(u_h^n; \boldsymbol{u}^{n+1}, \boldsymbol{e}_u^{h,n+1})]
    \\
    &
    +[t_h(u_h^n; \boldsymbol{u}^{n+1} - \boldsymbol{\Pi}_V u^{n+1}, \boldsymbol{e}_u^{h,n+1})
    - t_h(u^n; \boldsymbol{u}^{n+1} - \boldsymbol{\Pi}_V u^{n+1}, \boldsymbol{e}_u^{h,n+1})]
    \\
    =&: I_{41} + I_{42}.
  \end{split}
\end{equation*}
For $I_{41}$ we note that since the second argument of $t_h$ is
continuous almost everywhere:
\begin{equation*}
  I_{41} 
  = t_h(u^n - u_h^n; \boldsymbol{u}^{n+1}, \boldsymbol{e}_u^{h,n+1})
  = t_h(e_u^{I,n}; \boldsymbol{u}^{n+1}, \boldsymbol{e}_u^{h,n+1})
  - t_h(e_u^{h,n}; \boldsymbol{u}^{n+1}, \boldsymbol{e}_u^{h,n+1})
  =: I_{411} + I_{412}.
\end{equation*}
We have by \cref{eq:boundedness_th} and Young's inequality,
\begin{equation}
  \label{eq:boundI411}
  \begin{split}
    I_{411}
    &\leq c_w \norm[0]{e_u^{I,n}}_{1,h,\Omega^s}\tnorm{\boldsymbol{u}^{n+1}}_{v,s}\tnorm{\boldsymbol{e}_u^{h,n+1}}_{v,s}
    \\
    &\leq Ch^{k}\norm[0]{u^n}_{k+1,\Omega^s}\norm[0]{\nabla u^{n+1}}_{\Omega^s}\tnorm{\boldsymbol{e}_u^{h,n+1}}_{v,s}
    \\
    &\leq \tfrac{1}{2}\gamma \tnorm{\boldsymbol{e}_u^{h,n+1}}_{v,s}^2
    + \frac{C}{\gamma}h^{2k}\norm[0]{u^n}_{k+1,\Omega^s}^2\norm[0]{\nabla u^{n+1}}_{\Omega^s}^2.      
  \end{split}
\end{equation}
Next, using that $u^{n+1} = \bar{u}^{n+1}$ on facets,
\begin{equation*}
  I_{412}
  =
  - (u^{n+1} \otimes e_u^{h,n}, \nabla e_u^{h,n+1})_{\Omega^s}
  + \langle e_u^{h,n} \cdot n, (e_u^{h,n+1}-\bar{e}_u^{h,n+1})\cdot u^{n+1} \rangle_{\partial\mathcal{T}_h^s}
  + \langle e_u^{h,n}\cdot n, \bar{e}_u^{h,n+1} \cdot u^{n+1} \rangle_{\Gamma^I}.
\end{equation*}
At this point we note that since $e_u^{h,n} \cdot n$,
$\bar{e}_u^{h,n+1}$, and $u^{n+1}$ are single-valued on facets, and
because $u = 0$ on $\Gamma^s$, we have that
$ \langle e_u^{h,n} \cdot n, \bar{e}_u^{h,n+1}\cdot u^{n+1}
\rangle_{\partial\mathcal{T}_h^s} = \langle e_u^{h,n}\cdot n,
\bar{e}_u^{h,n+1} \cdot u^{n+1} \rangle_{\Gamma^I}$. Therefore,
\begin{equation*}
  I_{412}
  =
  - (u^{n+1} \otimes e_u^{h,n}, \nabla e_u^{h,n+1})_{\Omega^s}
  + \langle e_u^{h,n} \cdot n, e_u^{h,n+1}\cdot u^{n+1} \rangle_{\partial\mathcal{T}_h^s}.
\end{equation*}
Integrating by parts, using that $\nabla \cdot e_u^{h,n} = 0$ on each
$K \in \mathcal{T}^s_h$, the generalized H\"older's inequality,
\cref{eq:L6Sobolev}, and Young's inequality:
\begin{equation}
  \label{eq:boundI412}
  \begin{split}
    I_{412}
    =&
    (\nabla \cdot (u^{n+1} \otimes e_u^{h,n}), e_u^{h,n+1})_{\Omega^s}    
    = (e_u^{h,n} \cdot \nabla u^{n+1}, e_u^{h,n+1})_{\Omega^s}
    \\
    \le& \norm[0]{e_u^{h,n}}_{\Omega^s}|u^{n+1}|_{W^1_3(\Omega^s)} \norm[0]{e_u^{h,n+1}}_{L^6(\Omega^s)}
    \\
    \le& C\norm[0]{e_u^{h,n}}_{\Omega^s}|u^{n+1}|_{W^1_3(\Omega^s)} \tnorm{\boldsymbol{e}_u^{h,n+1}}_{v,s}
    \\
    \le& \tfrac{1}{2}\gamma \tnorm{\boldsymbol{e}_u^{h,n+1}}_{v}^2 + \frac{C}{\gamma}\norm[0]{e_u^{h,n}}_{\Omega^s}^2|u^{n+1}|_{W^1_3(\Omega^s)}^2.
  \end{split}
\end{equation}
Combining \cref{eq:boundI411,eq:boundI412} we find
\begin{equation}
  \label{eq:boundI41}
  I_{41} \le 
  \gamma \tnorm{\boldsymbol{e}_u^{h,n+1}}_{v}^2
  + \frac{C}{\gamma}h^{2k}\norm[0]{u^n}_{k+1,\Omega^s}^2\norm[0]{\nabla u^{n+1}}_{\Omega^s}^2
  + \frac{C}{\gamma}\norm[0]{e_u^{h,n}}_{\Omega^s}^2|u^{n+1}|_{W^1_3(\Omega^s)}^2.
\end{equation}
We next consider $I_{42}$ which we first write as:
\begin{equation*}
  \begin{split}
    I_{42}
    =&
    [t_h(u_h^n; \boldsymbol{u}^{n+1} - \boldsymbol{\Pi}_V u^{n+1}, \boldsymbol{e}_u^{h,n+1})
    -
    t_h(\Pi_Vu^n; \boldsymbol{u}^{n+1} - \boldsymbol{\Pi}_V u^{n+1}, \boldsymbol{e}_u^{h,n+1})]
    \\
    &+
    [t_h(\Pi_Vu^n; \boldsymbol{u}^{n+1} - \boldsymbol{\Pi}_V u^{n+1}, \boldsymbol{e}_u^{h,n+1})
    - t_h(u^n; \boldsymbol{u}^{n+1} - \boldsymbol{\Pi}_V u^{n+1}, \boldsymbol{e}_u^{h,n+1})]
    \\
    =&
    [t_h(\Pi_Vu^n; \boldsymbol{e}_u^{I,n+1}, \boldsymbol{e}_u^{h,n+1})
    - t_h(u^n; \boldsymbol{e}_u^{I,n+1}, \boldsymbol{e}_u^{h,n+1})]        
    \\
    &+
    [t_h(u_h^n; \boldsymbol{e}_u^{I,n+1}, \boldsymbol{e}_u^{h,n+1})
    -
    t_h(\Pi_Vu^n; \boldsymbol{e}_u^{I,n+1}, \boldsymbol{e}_u^{h,n+1})]
    \\
    =&: I_{421} + I_{422}.
  \end{split}
\end{equation*}
For $I_{421}$ we have by \cref{eq:boundedness_th}, \cite[Lemma
7]{Cesmelioglu:2023}, properties of $\Pi_V$ and $\bar{\Pi}_V$, and
Young's inequality,
\begin{equation}
  \label{eq:boundI421}
  \begin{split}
    I_{421} 
    &\le
    c_w \norm{\Pi_Vu^n - u^n}_{1,h,\Omega^s} \tnorm{\boldsymbol{e}_u^{I,n+1}}_{v,s} \tnorm{\boldsymbol{e}_u^{h,n+1}}_{v,s}
    \\
    &=
    c_w \norm[0]{e_u^{I,n}}_{1,h,\Omega^s} \tnorm{\boldsymbol{e}_u^{I,n+1}}_{v,s} \tnorm{\boldsymbol{e}_u^{h,n+1}}_{v,s}
    \\
    &\le 
    C h^k\norm[0]{u^{n+1}}_{k+1,\Omega^s}\norm[0]{e_u^{I,n}}_{1,h,\Omega^s}\tnorm{\boldsymbol{e}_u^{h,n+1}}_{v,s}
    \\
    &\le 
    C h^{2k}\norm[0]{u^{n+1}}_{k+1,\Omega^s}\norm[0]{u^{n}}_{k+1,\Omega^s}\tnorm{\boldsymbol{e}_u^{h,n+1}}_{v,s}
    \\
    &\le \tfrac{1}{2}\gamma \tnorm{\boldsymbol{e}_u^{h,n+1}}_{v}^2 + \frac{C}{\gamma}h^{4k}\norm[0]{u^{n+1}}_{k+1,\Omega^s}^2\norm[0]{u^{n}}_{k+1,\Omega^s}^2.
  \end{split}
\end{equation}
For $I_{422}$ we find, after integrating by parts,
\begin{equation*}
  \begin{split}
    I_{422} 
    =&t_h(u_h^n; \boldsymbol{e}_u^{I,n+1}, \boldsymbol{e}_u^{h,n+1})
    -
    t_h(\Pi_Vu^n; \boldsymbol{e}_u^{I,n+1}, \boldsymbol{e}_u^{h,n+1})
    \\
    =&
    (\nabla e_u^{I,n+1}, e_u^{h,n+1}\otimes (u_h^n-\Pi_Vu^n))_{\Omega^s} - \langle ((e_u^{I,n+1}-\bar{e}_u^{I,n+1})\otimes (u_h^n-\Pi_Vu^n))n, e_u^{h,n+1} \rangle_{\partial\mathcal{T}^s}
    \\
    &  + \langle (\max(u_h^n\cdot n, 0) - \max(\Pi_Vu^n\cdot n, 0))(e_u^{I,n+1}-\bar{e}_u^{I,n+1}), e_u^{h,n+1}-\bar{e}_u^{h,n+1} \rangle_{\partial\mathcal{T}^s}       
    \\
    =&
    (\nabla e_u^{I,n+1}, e_u^{h,n+1}\otimes e_u^{h,n})_{\Omega^s} 
    - \langle ((e_u^{I,n+1}-\bar{e}_u^{I,n+1})\otimes e_u^{h,n})n, e_u^{h,n+1} \rangle_{\partial\mathcal{T}^s}
    \\
    &  + \langle (\max(u_h^n\cdot n, 0) - \max(\Pi_Vu^n\cdot n, 0))(e_u^{I,n+1}-\bar{e}_u^{I,n+1}), e_u^{h,n+1}-\bar{e}_u^{h,n+1} \rangle_{\partial\mathcal{T}^s}      
    \\
    =&
    (e_u^{h,n} \cdot \nabla e_u^{I,n+1}, e_u^{h,n+1})_{\Omega^s} 
    - \langle e_u^{h,n} \cdot n, e_u^{h,n+1} \cdot (e_u^{I,n+1}-\bar{e}_u^{I,n+1}) \rangle_{\partial\mathcal{T}^s}
    \\
    &  + \langle (\max(u_h^n\cdot n, 0) - \max(\Pi_Vu^n\cdot n, 0))(e_u^{I,n+1}-\bar{e}_u^{I,n+1}), e_u^{h,n+1}-\bar{e}_u^{h,n+1} \rangle_{\partial\mathcal{T}^s}   
    \\
    =& I_{422a} + I_{422b} + I_{422c}.
  \end{split}
\end{equation*}
For $I_{422a}$, using generalized H\"older's inequality,
\cref{eq:L6Sobolev}, that
$|u^{n+1} - \Pi_Vu^{n+1}|_{W_3^1(\Omega^s)} \le c
|u^{n+1}|_{W_3^1(\Omega^s)}$ (see \cite[Theorem 16.4]{Ern:book-I})
we have:
\begin{equation}
  \label{eq:T422a}
  \begin{split}
    I_{422a} 
    &= (e_u^{h,n} \cdot \nabla e_u^{I,n+1}, e_u^{h,n+1})_{\Omega^s}
    \\  
    &\le \norm[0]{e_u^{h,n}}_{\Omega^s} \norm[0]{\nabla e_u^{I,n+1}}_{L^3(\Omega^s)}\norm[0]{e_u^{h,n+1}}_{L^6(\Omega^s)}
    \\
    &\le C \norm[0]{e_u^{h,n}}_{\Omega^s}|e_u^{I,n+1}|_{W_3^1(\Omega^s)}\tnorm{\boldsymbol{e}_u^{h,n+1}}_{v,s}
    \\
    &= C \norm[0]{e_u^{h,n}}_{\Omega^s}|u^{n+1} - \Pi_Vu^{n+1}|_{W_3^1(\Omega^s)}\tnorm{\boldsymbol{e}_u^{h,n+1}}_{v,s}
    \\
    &\le C\norm[0]{e_u^{h,n}}_{\Omega^s}|u^{n+1}|_{W_3^1(\Omega^s)}\tnorm{\boldsymbol{e}_u^{h,n+1}}_{v,s}.
  \end{split}    
\end{equation}
To bound $I_{422b}$ let us first consider a single facet
$F \subset \partial K$. By H\"older's inequality,
\begin{equation}
  \label{eq:singleFtermI422b}
  |\langle e_u^{h,n} \cdot n, e_u^{h,n+1} \cdot (e_u^{I,n+1}-\bar{e}_u^{I,n+1}) \rangle_F|
  \le \norm[0]{e_u^{h,n}}_{L^{3/2}(F)}\norm[0]{e_u^{I,n+1}-\bar{e}_u^{I,n+1}}_{L^3(F)}\norm[0]{e_u^{h,n+1}}_{L^{\infty}(F)}.
\end{equation}
Noting that $\bar{\Pi}_V\Pi_Vu = \Pi_Vu$ on $F$, we have:
\begin{equation}
  \label{eq:boundingeuinp1minebaru}
  \begin{split}
    \norm[0]{e_u^{I,n+1} - \bar{e}_u^{I,n+1}}_{L^3(F)}
    &= \norm[0]{u^{n+1} - \Pi_Vu^{n+1} - \gamma(u^{n+1}) + \bar{\Pi}_Vu^{n+1}}_{L^3(F)}
    \\
    &= \norm[0]{\bar{\Pi}_Vu^{n+1} - \Pi_Vu^{n+1}}_{L^3(F)}
    \\
    &= \norm[0]{\bar{\Pi}_V(u^{n+1} - \Pi_Vu^{n+1})}_{L^3(F)}
    \\
    &\le C \norm[0]{u^{n+1} - \Pi_Vu^{n+1}}_{L^3(F)},
  \end{split}
\end{equation}
where the inequality is by \cite[Lemma 11.18]{Ern:book-I}. By a multiplicative trace inequality
\cite[Lemma 12.15]{Ern:book-I}, we have that
\begin{multline}
  \label{eq:boundingunp1minpivunp1L3}
  \norm[0]{u^{n+1} - \Pi_Vu^{n+1}}_{L^3(F)}
  \\
  \le c \norm[0]{u^{n+1} - \Pi_Vu^{n+1}}_{L^3(K)}^{2/3}
  \del[1]{h_K^{-1/3}\norm[0]{u^{n+1} - \Pi_Vu^{n+1}}_{L^3(K)}^{1/3}
    + \norm[0]{\nabla(u^{n+1} - \Pi_Vu^{n+1})}_{L^3(K)}^{1/3}},
\end{multline}
and by \cite[Theorem 16.4]{Ern:book-I} we have
\begin{equation}
  \label{eq:L2boundunp1L2W13}
  \begin{split}
    \norm[0]{u^{n+1} - \Pi_Vu^{n+1}}_{L^3(K)}
    \le& c h_K|\nabla u^{n+1}|_{W^1_3(K)},
    \\
    \norm[0]{\nabla(u^{n+1} - \Pi_Vu^{n+1})}_{L^3(K)}
    \le& c \norm[0]{u^{n+1}}_{W^1_3(K)}.
  \end{split}
\end{equation}
Combining
\cref{eq:boundingeuinp1minebaru,eq:boundingunp1minpivunp1L3,eq:L2boundunp1L2W13},
\begin{equation}
  \label{eq:euIminebuI}
  \begin{split}
    \norm[0]{e_u^{I,n+1} - \bar{e}_u^{I,n+1}}_{L^3(F)}
    &\le 
    c h_K^{2/3}|\nabla u^{n+1}|_{W^1_3(K)}^{2/3}
    \del[1]{h_K^{-1/3}  h_K^{1/3}|\nabla u^{n+1}|_{W^1_3(K)}^{1/3}
      + \norm[0]{u^{n+1}}_{W^1_3(K)}^{1/3}}
    \\
    &\le ch_K^{2/3}\norm[0]{u^{n+1}}_{W_3^1(K)}.        
  \end{split}
\end{equation}
We also have, by a discrete trace inequality \cite[Lemma
1.52]{Pietro:book}, that
\begin{equation}
  \label{eq:discreteTraceineq}
  \norm[0]{e_u^{h,n}}_{L^{3/2}(F)} \le C h_K^{-2/3}\norm[0]{e_u^{h,n}}_{L^{3/2}(K)}, 
  \qquad
  \norm[0]{e_u^{h,n+1}}_{L^{\infty}(F)} \le C \norm[0]{e_u^{h,n+1}}_{L^{\infty}(K)}.
\end{equation}
Combining \cref{eq:singleFtermI422b} with
\cref{eq:euIminebuI,eq:discreteTraceineq}
\begin{equation*}
  \begin{split}
    |\langle e_u^{h,n} \cdot n, e_u^{h,n+1} \cdot (e_u^{I,n+1}-\bar{e}_u^{I,n+1}) \rangle_F|
    \le&   
    C h_K^{-2/3}\norm[0]{e_u^{h,n}}_{L^{3/2}(K)}h_K^{2/3}\norm[0]{u^{n+1}}_{W_3^1(K)}\norm[0]{e_u^{h,n+1}}_{L^{\infty}(K)}
    \\
    =&
    C \norm[0]{e_u^{h,n}}_{L^{3/2}(K)}\norm[0]{u^{n+1}}_{W_3^1(K)}\norm[0]{e_u^{h,n+1}}_{L^{\infty}(K)}.        
  \end{split}    
\end{equation*}
By \cite[Lemma~1.50]{Pietro:book}, for $\dim=2,3$,
\begin{subequations}
  \begin{align}
    \label{eq:euhnL32bound}
    \norm[0]{ e_u^{h,n} }_{L^{3/2}(K)} 
    &\le C h_K^{\dim/6} \norm[0]{ e_u^{h,n} }_{L^2(K)}, 
    \\
    \label{eq:euhninftybound}
    \norm[0]{ e_u^{h,n+1} }_{L^{\infty}(K)} 
    &\le C h_K^{-\dim/6} \norm[0]{ e_u^{h,n+1} }_{L^6(K)},
  \end{align}
\end{subequations}
so that
\begin{equation*}
  |\langle e_u^{h,n} \cdot n, e_u^{h,n+1} \cdot (e_u^{I,n+1}-\bar{e}_u^{I,n+1}) \rangle_F|
  \le
  C \norm[0]{e_u^{h,n}}_{L^2(K)}\norm[0]{u^{n+1}}_{W_3^1(K)}\norm[0]{e_u^{h,n+1}}_{L^6(K)}.        
\end{equation*}
Since we assumed $F \subset \partial K$ it follows that
\begin{equation*}
  |\langle e_u^{h,n} \cdot n, e_u^{h,n+1} \cdot (e_u^{I,n+1}-\bar{e}_u^{I,n+1}) \rangle_{\partial K}|
  \le
  C \norm[0]{e_u^{h,n}}_{L^2(K)}\norm[0]{u^{n+1}}_{W_3^1(K)}\norm[0]{e_u^{h,n+1}}_{L^6(K)}.        
\end{equation*}
Summing over all elements in $\mathcal{T}_h^s$, using a generalized
H\"older's inequality for the summation over the elements, and
\cref{eq:L6Sobolev},
\begin{equation}
  \label{eq:T422b}
  \begin{split}
    I_{422b}
    &\le 
    C \sum_{K\in\mathcal{T}_h^s}\norm[0]{e_u^{h,n}}_{L^2(K)}\norm[0]{u^{n+1}}_{W_3^1(K)}\norm[0]{e_u^{h,n+1}}_{L^6(K)}
    \\
    &\le 
    C \del[2]{\sum_{K\in\mathcal{T}_h^s}\norm[0]{e_u^{h,n}}_{L^2(K)}^2}^{1/2}
    \del[2]{\sum_{K\in\mathcal{T}_h^s}\norm[0]{u^{n+1}}_{W_3^1(K)}^3}^{1/3}
    \del[2]{\sum_{K\in\mathcal{T}_h^s}\norm[0]{e_u^{h,n+1}}_{L^6(K)}^6}^{1/6}
    \\
    &\le
    C \norm[0]{e_u^{h,n}}_{L^2(\Omega^s)}\norm[0]{u^{n+1}}_{W_3^1(\Omega^s)}\tnorm{\boldsymbol{e}_u^{h,n+1}}_{v,s}.
  \end{split}
\end{equation}
Let us now consider $I_{422c}$. Starting again with a single facet
$F \subset \partial K$, we find using H\"older's inequality,
\begin{equation}
  \label{eq:holdermaxboundF}   
  \begin{split}
    &|\langle (\max(u_h^n\cdot n, 0) - \max(\Pi_Vu^n\cdot n, 0))(e_u^{I,n+1}-\bar{e}_u^{I,n+1}), e_u^{h,n+1}-\bar{e}_u^{h,n+1} \rangle_F|
    \\
    &\le \norm[0]{(\max(u_h^n\cdot n, 0) - \max(\Pi_Vu^n\cdot n, 0))}_{L^{3/2}(F)} \norm[0]{e_u^{I,n+1}-\bar{e}_u^{I,n+1}}_{L^3(F)} \times \\
    & \hspace{20em}\norm[0]{e_u^{h,n+1}-\bar{e}_u^{h,n+1}}_{L^{\infty}(F)}.        
  \end{split}
\end{equation}
Since $a \mapsto \max(a,0)$ is Lipschitz (\cite[Appendix
A.3.1]{Cesmelioglu:2017}), and using \cref{eq:discreteTraceineq}:
\begin{multline}
  \label{eq:lipschitzmaxboundF}
  \norm[0]{\max(u_h^n\cdot n, 0) - \max(\Pi_Vu^n\cdot n, 0)}_{L^{3/2}(F)}
  \\
  \le C \norm[0]{u_h^n - \Pi_Vu^n}_{L^{3/2}(F)}    
  = C \norm[0]{e_u^{h,n}}_{L^{3/2}(F)}
  \le C h_K^{-2/3}\norm[0]{e_u^{h,n}}_{L^{3/2}(K)}.    
\end{multline}
Furthermore, by \cite[Lemma 1.50]{Pietro:book},
\begin{equation}
  \label{eq:eminebarinf2F}
  \norm[0]{e_u^{h,n+1}-\bar{e}_u^{h,n+1}}_{L^{\infty}(F)}
  \le
  ch_K^{(1-\dim)/2}\norm[0]{e_u^{h,n+1}-\bar{e}_u^{h,n+1}}_{L^2(F)},
\end{equation}
From \cref{eq:holdermaxboundF}, \cref{eq:lipschitzmaxboundF},
\cref{eq:euIminebuI}, \cref{eq:euhnL32bound}, and
\cref{eq:eminebarinf2F} we therefore find that
\begin{align*}
  &|\langle (\max(u_h^n\cdot n, 0) - \max(\Pi_Vu^n\cdot n, 0))(e_u^{I,n+1}-\bar{e}_u^{I,n+1}), e_u^{h,n+1}-\bar{e}_u^{h,n+1} \rangle_F|
  &
  \\
  &
    \le C h_K^{-2/3}\norm[0]{e_u^{h,n}}_{L^{3/2}(K)}
    \norm[0]{e_u^{I,n+1}-\bar{e}_u^{I,n+1}}_{L^3(F)}\norm[0]{e_u^{h,n+1}-\bar{e}_u^{h,n+1}}_{L^{\infty}(F)}    
  & \text{(by \cref{eq:lipschitzmaxboundF})}
  \\
  &
    \le C h_K^{-2/3}\norm[0]{e_u^{h,n}}_{L^{3/2}(K)}
    h_K^{2/3}\norm[0]{u^{n+1}}_{W_3^1(K)}\norm[0]{e_u^{h,n+1}-\bar{e}_u^{h,n+1}}_{L^{\infty}(F)}
  & \text{(by \cref{eq:euIminebuI})}
  \\
  &
    \le C h_K^{\dim/6} \norm[0]{ e_u^{h,n} }_{L^2(K)}
    \norm[0]{u^{n+1}}_{W_3^1(K)}\norm[0]{e_u^{h,n+1}-\bar{e}_u^{h,n+1}}_{L^{\infty}(F)}
  & \text{(by \cref{eq:euhnL32bound})}
  \\
  &
    \le C h_K^{\dim/6} \norm[0]{ e_u^{h,n} }_{L^2(K)}
    \norm[0]{u^{n+1}}_{W_3^1(K)}
    h_K^{(1-\dim)/2}\norm[0]{e_u^{h,n+1}-\bar{e}_u^{h,n+1}}_{L^2(F)}
  & \text{(by \cref{eq:eminebarinf2F})}
  \\
  &
    \le C \norm[0]{e_u^{h,n}}_{L^2(K)}
    \norm[0]{u^{n+1}}_{W_3^1(K)}
    (h_K^{-1/2}\norm[0]{e_u^{h,n+1}-\bar{e}_u^{h,n+1}}_{L^2(F)}),
  &
\end{align*}
where the last inequality is because
$h_K^{\dim/6}h_K^{(1-\dim)/2} \le h_K^{-1/2}$ for $\dim=2,3$. Since
$F \subset \partial K$ it follows that
\begin{multline*}
  |\langle (\max(u_h^n\cdot n, 0) - \max(\Pi_Vu^n\cdot n, 0))(e_u^{I,n+1}-\bar{e}_u^{I,n+1}), e_u^{h,n+1}-\bar{e}_u^{h,n+1} \rangle_{\partial K}|
  \\
  \le C \norm[0]{ e_u^{h,n} }_{L^2(K)}
  \norm[0]{u^{n+1}}_{W_3^1(K)}
  (h_K^{-1/2}\norm[0]{e_u^{h,n+1}-\bar{e}_u^{h,n+1}}_{L^2(\partial K)}).
\end{multline*}
Summing over all elements in $\mathcal{T}_h^s$ and by the
Cauchy--Schwarz inequality,
\begin{equation}
  \label{eq:T422c}
  \begin{split}
    I_{422c}
    &\le C\sum_{K\in\mathcal{T}_h^s} \norm[0]{ e_u^{h,n} }_{L^2(K)}
    \norm[0]{u^{n+1}}_{W_3^1(K)}
    (h_K^{-1/2}\norm[0]{e_u^{h,n+1}-\bar{e}_u^{h,n+1}}_{L^2(\partial K)})
    \\
    &\le C\max_{K \in \mathcal{T}_h^s} \norm[0]{u^{n+1}}_{W_3^1(K)} \sum_{K\in\mathcal{T}_h^s} \norm[0]{ e_u^{h,n} }_{L^2(K)}
    (h_K^{-1/2}\norm[0]{e_u^{h,n+1}-\bar{e}_u^{h,n+1}}_{L^2(\partial K)}) 
    \\
    &\le C\max_{K \in \mathcal{T}_h^s} \norm[0]{u^{n+1}}_{W_3^1(K)} 
    \del[2]{\sum_{K\in\mathcal{T}_h^s} \norm[0]{ e_u^{h,n} }_{L^2(K)}^2}^{1/2}
    \del[2]{\sum_{K\in\mathcal{T}_h^s}h_K^{-1}\norm[0]{e_u^{h,n+1}-\bar{e}_u^{h,n+1}}_{L^2(\partial K)}^2}^{1/2}
    \\
    &\le C\norm[0]{u^{n+1}}_{W_3^1(\Omega^s)} \norm[0]{ e_u^{h,n} }_{\Omega^s} \tnorm{\boldsymbol{e}_u^{h,n+1}}_{v,s}.   
  \end{split}
\end{equation}
Combining \cref{eq:T422a,eq:T422b,eq:T422c}, and applying Young's
inequality, we find the following bound for $I_{422}$:
\begin{equation}
  \label{eq:T422}
  I_{422}
  \le
  \tfrac{1}{2}\gamma \tnorm{\boldsymbol{e}_u^{h,n+1}}_{v}^2 + \frac{C}{\gamma}\norm[0]{e_u^{h,n}}_{\Omega^s}^2\norm[0]{u^{n+1}}_{W_3^1(\Omega^s)}^2.
\end{equation}
Combining now \cref{eq:boundI421,eq:T422} we find that
\begin{equation*}
  \label{eq:boundT42}
  I_{42}
  \le
  \gamma \tnorm{\boldsymbol{e}_u^{h,n+1}}_{v}^2 + \frac{C}{\gamma}h^{4k}\norm[0]{u^{n+1}}_{k+1,\Omega^s}^2
  \norm[0]{u^{n}}_{k+1,\Omega^s}^2
  + 
  \frac{C}{\gamma}\norm[0]{e_u^{h,n}}_{\Omega^s}^2\norm[0]{u^{n+1}}_{W_3^1(\Omega^s)}^2,
\end{equation*}
which, when combined with \cref{eq:boundI41}, gives us:
\begin{equation*}
  I_4
  \le
  2\gamma \tnorm{\boldsymbol{e}_u^{h,n+1}}_{v}^2
  + \frac{C}{\gamma}h^{2k}\norm[0]{u^{n+1}}_{k+1,\Omega^s}^2\norm[0]{u^{n}}_{k+1,\Omega^s}^2
  + \frac{C}{\gamma}\norm[0]{e_u^{h,n}}_{\Omega^s}^2\norm[0]{u^{n+1}}_{W_3^1(\Omega^s)}^2,
\end{equation*}
which is the desired result.

\end{document}